\theoremstyle{definition}
\newtheorem{definition}{Definition}[section]
\theoremstyle{plain}
\newtheorem{theorem}{Theorem}[section]
\newtheorem{corollary}[theorem]{Corollary}
\newtheorem{proposition}[theorem]{Proposition}
\newtheorem{lemma}[theorem]{Lemma}
\theoremstyle{definition}
\newtheorem{example}[theorem]{Example}
\title{$\mathbb{N}$-Graph $C^{*}$-Algebras}
\author{Tim Schenkel}
\date{\today}
\begin{document}

\maketitle
\begin{abstract}
     In this paper we generalize the notion of a $k$-graph into (countable) infinite rank. We then define our $C^*$-algebra in a similar way as in $k$-graph $C^*$-algebras. With this construction we are able to find analogues to the Gauge Invariant Uniqueness and Cuntz-Krieger Uniqueness Theorems. We also show that the $\mathbb{N}$-graph $C^*$-algebras can be viewed as the inductive limit of $k$-graph $C^*$-algebras. This gives a nice way to describe the gauge-invariant ideal structure. Additionally, we describe the vertex-set for regular gauge-invariant ideals of our $N$-graph $C^*$-algebras. We then take our construction of the $\mathbb{N}$-graph into the algebraic setting and receive many similarities to the $C^*$-algebra construction.  
\end{abstract}
\section{Introduction}
\par Since Cuntz algebras were introduced they have become increasingly more general. First generalized into graph $C^*$-algebras and $k$-graph $C^*$-algebras. From there much work has been put into forming new algebras given similar structure. In \cite{BrownloweSimsVittadello} $p$-graphs were introduced. Many other generalizations have occurred including Leavitt Path Algebras, Kumjian-Pask Algebras, Cuntz-Pismer Algebras and more. In this paper we introduce another generalization, $\mathbb{N}$-graph $C^*$-algebras. 
\par We first generalize $k$-graphs with a factorization into $\mathbb{N}^{\mathbb{N}}_{0}$ and define a $C^*$-algebra defined using Cuntz-Krieger relations. We then show there is a directed system of $k$-graphs for which our $\mathbb{N}$-graph $C^*$-algebra is the inductive limit. Next we show that our $\mathbb{N}$-graph $C^*$-algebras can be viewed as an example of $P$-graph. We also give Gauge-Invariant Uniqueness and Cuntz-Krieger Uniqueness Theorems for our new algebra. Finally, we describe the gauge-invariant ideal structure of the $C^*$-algebra. 
In the next section we detour from the traditional route and follow in the steps of \cite{Schenkel} and give a vertex set description of the gauge-invariant regular ideals. We further show that aperiodicity is preserved when quotienting by a gauge-invariant regular ideal. 
\par Lastly, we consider an algebraic analogue of our $\mathbb{N}$-graph $C^*$-algebras. We largely follow the work of \cite{Kumjian Pask Algebra} to give Graded Uniqueness and Cuntz-Krieger Uniqueness Theorems. We also describe the basic, graded ideal structure. Lastly we look at the basic, graded regular ideals of these algebras.
It may be interesting to see a groupoid $C^*$-algebra construction or a Steinberg algebra construction of these algebras.

\subsection*{Acknowledgement}
I would like to sincerely thank my advisor, Dr. Adam Fuller, for his help with this research as part of my dissertation. Your continued guidance and support is greatly appreciated.
\section{Infinite rank graph $C^*$-algebras}
In this section we will introduce $\mathbb{N}$-graphs or (countably) infinite-rank graphs. They are closely modeled after $k$-graphs which were introduced in \cite{KumjianPask}. We will largely follow the work of \cite{KumjianPask} in our generalization of the $k$-graph to this setting.
We use the notation of $\mathbb{N}^{\mathbb{N}}_{0}$ to denote the subgroup of $\mathbb{N^N}$ such that there are only finitely many nonzero entries for each element, with addition defined entry-wise.
\begin{definition}
A $ \mathbb{N}$-graph $(\Lambda, d)$ consists of a countable category $\Lambda$ (with range and source maps $r$ and $s$ respectively) together with a functor $d: \Lambda \mapsto \mathbb{N}^{\mathbb{N}}_{0}$ satisfying the factorization property: For every $\lambda \in \Lambda$ with $d(\lambda) = m + n$, there are unique elements $\mu , \nu \in \Lambda$ such that $\lambda = \mu \nu $ and $d(\mu) = m, d(\nu) = n$. For $n \in \mathbb{N}^{\mathbb{N}}_{0}$ we write $\Lambda ^{n} = d^{-1}(n)$. A morphism between $\mathbb{N}$-graphs $(\Lambda _{1}, d_{1})$ and $(\Lambda_{2}, d_{2})$ is a functor $f: \Lambda _{1} \mapsto \Lambda _{2}$ compatible with degree maps. We will often refer to $\Lambda$ as a $\mathbb{N}$-graph with the degree map being omitted from notation.

\end{definition}

We will refer to elements of $\Lambda^0$ as vertices and elements of $mor(\Lambda)$ as paths. As with $k$-graphs, our factorization property gives us cancellation and we can identify $Obj(\Lambda )$ with $\Lambda^0.$

An $\mathbb{N}$-graph is row finite if for each $ n \in \mathbb{N}^{\mathbb{N}}_{0}$ and $v \in \Lambda^{0}$ the set $\Lambda^{n}(v) := \{ \lambda \in \Lambda^{n} : r(\lambda) = v \} $ is finite. Similarly $\Lambda $ has no sources if $\Lambda^{n}(v) \neq \emptyset$ for all $v \in \Lambda^{0}$ and $ n \in \mathbb{N}^{\mathbb{N}}_{0}$.
We will be assuming row-finite and no sources all paper unless otherwise stated.

\begin{definition}\label{CKR}
Let $\Lambda$ be a $\mathbb{N}$-graph, that is row-finite with no sources.
Then $C^{*}(\Lambda)$ is defined to be the universal $C^{*}$-algebra generated by a family  $$\{s_{\lambda}: \lambda \in \Lambda \}$$ of partial isometries satisfying:
\begin{enumerate}
    \item[(CK1)] $\{s_{v}: v \in \Lambda^{0} \}$ is a family of mutually orthogonal projections
    \item[(CK2)] $s_{\lambda \mu} = s_{\lambda} s_{\mu}$ for all $\lambda , \mu \in \Lambda$ such that $s(\lambda) = r(\mu)$, and 0 otherwise
    \item[(CK3)] $s_{\lambda}^{*} s_{\lambda} = s_{s(\lambda)}$ for all $\lambda \in \Lambda$. 
    \item[(CK4)] for all $v \in \Lambda^{0}$ and $n \in \mathbb{N}^{\mathbb{N}}_{0}$ we have $s_{v} = \sum _{\lambda \in v \Lambda^{n}} s_{\lambda}s_{\lambda}^{*}$ 
\end{enumerate}

\end{definition}
For $\lambda \in \Lambda$ define $p_\lambda = s_\lambda s_\lambda^*$. We note that for $v \in \Lambda^0$, $p_v = s_v$ as $s_v$ is a projection. We will often use $p_v$ instead of $s_v$ in our work to draw attention to when we are working with the vertex projection instead of the other isometries.
We also note that increasing finite sums of $p_v$'s forman approximate identity for $C^*(\Lambda).$
\begin{example}\label{ex}
\begin{enumerate}
    \item Define $\Omega = \Omega_{\mathbb{N}}$ as the small category with objects \\
    $Obj(\Omega) = \mathbb{N}^{\mathbb{N}}_{0}$, and morphisms $\Omega = \{(m,n) \in \mathbb{N}^{\mathbb{N}}_{0} \times \mathbb{N}^{\mathbb{N}}_{0} : m \leq n\},$ the range and source maps are given by $r((m.n)) = m,$ and $s((m,n))=n.$ Let $d: \Omega \rightarrow \mathbb{N}^{\mathbb{N}}_{0}$ be defined to be $d((m,n)) = n-m.$ It can then be shown that $C^*(\Omega) = \mathcal{K} (\ell^2(\mathbb{N}^{\mathbb{N}}_0)),$ where the rank one maps are given by $s_{(m,n)}$ and $s_{(m,n)}^*.$
    \item View the semigroup $\mathbb{N}^{\mathbb{N}}_{0}$ as a small category with $d: \mathbb{N}^{\mathbb{N}}_{0} \rightarrow \mathbb{N}^{\mathbb{N}}_{0}$ as the identity map. Then $(\mathbb{N}^{\mathbb{N}}_{0}, d)$ is a $\mathbb{N}$-graph. By the factorization property we get that $s_\lambda s_\mu = s_\mu s_\lambda$ for all $\mu, \lambda \in \mathbb{N}^{\mathbb{N}}_{0}.$ Further we get that $s_0 = Id$ so by KP3 and KP4, $ s_\lambda s_\lambda^* = Id = s_\lambda^*s_\lambda$. We will show that $C^*(\mathbb{N}^{\mathbb{N}}_{0}) \cong C^*(\mathbb{Z}^\infty)$, where $s_{e_i} $ for $i \in \mathbb{N}$ are the canonical unitary generators and $s_{e_i}^*$ make the generators for the inverses, and where $\mathbb{Z}^{\infty} $ is the subgroup of $\mathbb{Z^N}$ where each element contains only finitely many nonzero entries. Using a common argument with dual groups which can be found in \cite[Prop. VII.1.1]{Davidson}, we get $ C^*(\mathbb{N}^{\mathbb{N}}_{0}) \cong C^*(\mathbb{Z}^\infty) \cong C(\mathbb{T}^\omega),$ where $\mathbb{T}^{\omega} = \displaystyle\prod_{\mathbb{N}}\mathbb{T}$ is endowed with the product topology, as $\mathbb{Z}^\infty$ is the dual group of $\mathbb{T}^\omega$ \cite{Bell}.
\end{enumerate}
\end{example}

Using the same construction as that of \cite[Proposition 2.11]{KumjianPask} we show that there is a nontrivial $\ast $-representation of $(\Lambda, d)$.

\begin{theorem}\label{Rep}
Let $(\Lambda,d)$ be a $\mathbb{N}$-graph. Then there exists a representation $\{ S_{\lambda} : \lambda \in \Lambda \}$ of $\Lambda$ on a Hilbert space with all partial isometries $S_{\lambda}$ nonzero. 
\end{theorem}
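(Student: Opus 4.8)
The plan is to mimic the construction in \cite[Proposition 2.11]{KumjianPask} and build a concrete Cuntz--Krieger $\Lambda$-family on the path space of $\Lambda$. First I would introduce the space $\Lambda^{\leq\infty}$ (or simply $\Lambda^{\infty}$, since $\Lambda$ has no sources) of ``infinite paths'': functors $x\colon\Omega_{\mathbb{N}}\to\Lambda$ compatible with the degree maps, where $\Omega_{\mathbb{N}}$ is the category from Example~\ref{ex}(i). The only subtlety relative to the $k$-graph case is that $\mathbb{N}^{\mathbb{N}}_{0}$ has no largest element in any finite truncation sense, but because every element of $\mathbb{N}^{\mathbb{N}}_{0}$ has finite support, the directed set $\mathbb{N}^{\mathbb{N}}_{0}$ is still countable and upward directed, so one can still speak of $x(0,n)\in\Lambda^{n}$ for each $n$, with the obvious compatibility $x(0,m+n)=x(0,m)\,x(m,m+n)$ coming from the factorization property. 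Row-finiteness and the no-sources hypothesis guarantee that for each $v\in\Lambda^{0}$ the set $v\Lambda^{\infty}$ of infinite paths with range $v$ is nonempty.

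Next I would let $H=\ell^{2}(\Lambda^{\infty})$ with orthonormal basis $\{e_{x}:x\in\Lambda^{\infty}\}$, and for $\lambda\in\Lambda$ define $S_{\lambda}$ on basis vectors by $S_{\lambda}e_{x}=e_{\lambda x}$ when $s(\lambda)=r(x)$ and $S_{\lambda}e_{x}=0$ otherwise, where $\lambda x$ is the infinite path obtained by prepending $\lambda$ (well-defined by the factorization property, exactly as in the $k$-graph setting). One checks $S_{\lambda}$ is a partial isometry with $S_{\lambda}^{*}e_{y}=e_{x}$ if $y=\lambda x$ for some (necessarily unique) $x$, and $0$ otherwise. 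Then I would verify (CK1)--(CK4): (CK1) is immediate since $S_{v}$ is the projection onto $\overline{\mathrm{span}}\{e_{x}:r(x)=v\}$ and the sets $\{x:r(x)=v\}$ are disjoint; (CK2) and (CK3) follow from the definition and the uniqueness clause of the factorization property; (CK4) holds because for fixed $n$ the map $x\mapsto(x(0,n),\ \sigma^{n}x)$ sets up a bijection between $\{x:r(x)=v\}$ and $\bigsqcup_{\lambda\in v\Lambda^{n}}\{y:r(y)=s(\lambda)\}$, so the projections $S_{\lambda}S_{\lambda}^{*}$ for $\lambda\in v\Lambda^{n}$ are mutually orthogonal with sum $S_{v}$ — here row-finiteness makes the sum finite and no-sources makes it nonempty. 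Finally, each $S_{\lambda}$ is nonzero: since $s(\lambda)\Lambda^{\infty}\neq\emptyset$, pick any $x$ with $r(x)=s(\lambda)$ and note $S_{\lambda}e_{x}=e_{\lambda x}\neq 0$. By the universal property of $C^{*}(\Lambda)$, the family $\{S_{\lambda}\}$ induces a $*$-homomorphism from $C^{*}(\Lambda)$ to $B(H)$ sending $s_{\lambda}\mapsto S_{\lambda}$, which in particular shows the universal generators are all nonzero as well.

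The main obstacle I anticipate is purely bookkeeping rather than conceptual: one must be careful that ``infinite paths'' are correctly defined when the index monoid $\mathbb{N}^{\mathbb{N}}_{0}$ is infinite-dimensional. In the $k$-graph case an infinite path is a functor out of $\Omega_{k}=\{(m,n)\in\mathbb{N}^{k}\times\mathbb{N}^{k}:m\le n\}$; here I would use $\Omega_{\mathbb{N}}$ and just need to confirm that the factorization property of $\Lambda$ extends inductively over arbitrarily large (but always finitely-supported) degrees, so that $x(m,n)$ is consistently defined for all $m\le n$ in $\mathbb{N}^{\mathbb{N}}_{0}$ and agrees on overlaps. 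This is exactly the argument that for $k$-graphs shows $\Lambda^{\infty}\neq\emptyset$, carried out one coordinate at a time; the no-sources hypothesis is what lets the induction continue indefinitely. Once that is in place, verifying (CK1)--(CK4) and nonvanishing is routine and essentially identical to \cite{KumjianPask}, so I would state those checks briefly and refer the reader there for the details common to both settings.
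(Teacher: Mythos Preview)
Your proposal is correct and follows exactly the paper's approach: both construct the representation on $H=\ell^{2}(\Lambda^{\infty})$ via $S_{\lambda}e_{x}=e_{\lambda x}$ when $s(\lambda)=r(x)$ and $0$ otherwise, observe that $S_{\lambda}\neq 0$ because $s(\lambda)\Lambda^{\infty}\neq\emptyset$, and declare the verification of (CK1)--(CK4) to be routine. In fact you supply considerably more detail than the paper does (the paper's proof is three sentences), including the bookkeeping about $\Omega_{\mathbb{N}}$-indexed infinite paths that the paper leaves entirely implicit.
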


\begin{proof}
Let $H = l^2(\Lambda^{\infty})$, then for $\lambda \in \Lambda$ define $S_{\lambda} \in B(H)$ by
$$ S_{\lambda}e_y =
\begin{dcases}
    e_{\lambda y} & s(\lambda) = y(0)\\
    0 & \text{otherwise}
\end{dcases}$$
where $e_y$ is the canonical basis for $H$. Notice that $S_\lambda$ is nonzero since \\$(s(\lambda))\Lambda^{\infty} \neq \{0\}$. It is now straightforward to check that it is a Cuntz-Kreiger $\Lambda$-family. 
\end{proof}

\begin{theorem}
Let $\Lambda$ be a $\mathbb{N}$-graph. Then for $\lambda , \mu \in \Lambda$ and $q \in \mathbb{N}^{\mathbb{N}}_{0}$ with $d(\lambda) , d(\mu) \leq q$ we have that $$s_{\lambda}^{*}s_{\mu} = \sum _{\lambda \alpha = \mu \beta \text{ and } d(\lambda \alpha) = q}s_{\alpha}s_{\beta}^{*} .$$
\end{theorem}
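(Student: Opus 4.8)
The plan is to mimic the standard $k$-graph computation (as in \cite[Lemma 3.5]{KumjianPask} or the Raeburn–Sims–Yeend treatment), since everything is happening inside finitely many ``coordinate directions'' and the factorization property behaves exactly as in the $k$-graph case once $q$ is fixed. First I would reduce to the case where $s(\lambda) = s(\mu) =: v$ is a single vertex: if the sources differ the left side is $s_\lambda^* s_v s_w s_\mu$-type and vanishes by (CK2)/(CK3), while on the right the indexing set $\{\lambda\alpha = \mu\beta\}$ is empty, so both sides are $0$. So assume $s(\lambda)=s(\mu)=v$.

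Next I would use (CK4) twice. Write $n = q - d(\lambda) \in \mathbb{N}^{\mathbb{N}}_0$ and $m = q - d(\mu) \in \mathbb{N}^{\mathbb{N}}_0$ (both legitimate since $d(\lambda), d(\mu) \le q$). Then $s_{s(\lambda)} = \sum_{\alpha \in s(\lambda)\Lambda^n} s_\alpha s_\alpha^*$ and similarly for $\mu$. Inserting these via (CK3),
\[
s_\lambda^* s_\mu \;=\; s_\lambda^* s_{r(\mu)} s_\mu \;=\; s_\lambda^* s_\mu s_{s(\mu)} \;=\; \sum_{\beta \in s(\mu)\Lambda^{m}} s_\lambda^* s_\mu s_\beta s_\beta^* \;=\; \sum_{\beta \in s(\mu)\Lambda^m} s_\lambda^* s_{\mu\beta} s_\beta^*,
\]
and likewise expanding $s_\lambda^* = s_{s(\lambda)} s_\lambda^* = \sum_{\alpha} s_\alpha^* s_{\lambda\alpha}^*$ gives
\[
s_\lambda^* s_\mu \;=\; \sum_{\alpha \in s(\lambda)\Lambda^n}\ \sum_{\beta \in s(\mu)\Lambda^m} s_\alpha^* \, s_{\lambda\alpha}^* \, s_{\mu\beta} \, s_\beta^*.
\]
Now each $\lambda\alpha$ and each $\mu\beta$ has degree exactly $q$, so $s_{\lambda\alpha}^* s_{\mu\beta} = 0$ unless $\lambda\alpha = \mu\beta$ (two distinct paths of the \emph{same} degree $q$ have orthogonal range projections $p_{\lambda\alpha}, p_{\mu\beta}$, since by (CK4) applied at their common range $p_{r(\lambda\alpha)} = \sum_{\sigma \in r\Lambda^q} p_\sigma$ is a sum of mutually orthogonal projections — this uses that $s_\sigma^* s_\sigma = s_{s(\sigma)}$ forces $p_\sigma p_\tau = 0$ for $\sigma \neq \tau$ in $\Lambda^q$), and when $\lambda\alpha = \mu\beta$ we get $s_{\lambda\alpha}^* s_{\mu\beta} = s_{s(\lambda\alpha)}$. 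Also, $\lambda\alpha = \mu\beta$ forces $s(\alpha) = s(\beta)$, so the surviving terms collapse to $s_\alpha^* s_{s(\alpha)} s_\beta^* = s_\alpha^* s_\beta^*$... wait — I should be careful about the order; tracking it honestly gives $\sum_{\lambda\alpha=\mu\beta,\ d(\lambda\alpha)=q} s_\alpha s_\beta^*$ after using $s_\alpha^* s_{\lambda\alpha}^* s_{\mu\beta} s_\beta^* = s_\alpha^*(s_\alpha s_\alpha^*)(s_\beta s_\beta^*) s_\beta^*$ on the diagonal via the factorization $\lambda\alpha = \mu\beta$ paired with $s_{\lambda\alpha}^* = s_\alpha^* s_\lambda^*$, which I will carry out cleanly in the writeup.

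The main obstacle is purely bookkeeping: making sure the index set on the right is correctly the set of pairs $(\alpha,\beta)$ with $\lambda\alpha = \mu\beta$ of total degree $q$, and that this matches the sum $\sum s_\alpha s_\beta^*$ with the right adjoints in the right places — the factorization property guarantees that for each such common extension $\lambda\alpha = \mu\beta$ the pair $(\alpha,\beta)$ is uniquely determined, so there is no overcounting. One should also note the sum is finite by row-finiteness (only finitely many $\alpha \in v\Lambda^n$, $\beta \in v\Lambda^m$), so all manipulations are legitimate in the $C^*$-algebra. I would present the orthogonality fact $p_\sigma p_\tau = \delta_{\sigma,\tau} p_\sigma$ for $\sigma,\tau \in \Lambda^q$ with common range as a short preliminary observation, since it is the one non-formal input and is the $\mathbb{N}$-graph analogue of the corresponding $k$-graph statement.
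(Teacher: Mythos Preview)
Your approach is the standard Kumjian--Pask computation, which is exactly what the paper invokes (it simply writes ``Same as \cite[Lemma 3.1]{KumjianPask}''), so the strategy is correct and matches the paper. Two slips to fix in the writeup, though. First, your preliminary reduction is stated for the wrong vertices: it is $r(\lambda)\neq r(\mu)$, not $s(\lambda)\neq s(\mu)$, that forces both sides to vanish, since $s_\lambda^* s_\mu = s_\lambda^* p_{r(\lambda)} p_{r(\mu)} s_\mu$ and any equality $\lambda\alpha=\mu\beta$ requires $r(\lambda)=r(\mu)$; the sources of $\lambda$ and $\mu$ may well differ in nontrivial instances of the formula, and indeed your double sum over $\alpha\in s(\lambda)\Lambda^n$ and $\beta\in s(\mu)\Lambda^m$ never uses $s(\lambda)=s(\mu)$. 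Second, the expansion of $s_\lambda^*$ should read $s_\lambda^* = s_{s(\lambda)} s_\lambda^* = \sum_{\alpha} s_\alpha s_{\lambda\alpha}^*$ (with $s_\alpha$, not $s_\alpha^*$); once that typo is corrected the bookkeeping you flag resolves immediately: $s_\alpha\, s_{\lambda\alpha}^* s_{\mu\beta}\, s_\beta^* = \delta_{\lambda\alpha,\mu\beta}\, s_\alpha p_{s(\alpha)} s_\beta^* = \delta_{\lambda\alpha,\mu\beta}\, s_\alpha s_\beta^*$, giving the stated sum directly.
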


\begin{proof}
Same as \cite[Lemma 3.1]{KumjianPask}
\end{proof}
As there is a lot known about $k$-graph $C^*$-algebras, it will benefit us greatly to have a way to describe our $\mathbb{N}$-graph algebras in relation to $k$-graph $C^*$-algebras. It may be clear at this point that with the similarities between $k$-graph and $\mathbb{N}$-graphs, we can find $k$-graphs as subcategories of our $\mathbb{N}$-graphs.
\begin{definition}
Let $\Lambda$ be a $\mathbb{N}$-graph. Then we define  $$^{k}\Lambda := \{\lambda \in \Lambda : d|_{e^{n}}(\lambda) = 0 \text{ when } n>k \}, $$ where $d|_{e^{n}}(\lambda)$ is equal to the $n^{th}$ entry of $d(\lambda)$.
\end{definition}
\begin{theorem}
$^{k}\Lambda $ as defined above is a $k$-graph, with degree functor $d|_{\mathbb{N}^{k}} = \pi \circ d$, where $ \pi$ is the canonical projection of $\mathbb{N}^{\mathbb{N}}_{0} \rightarrow \mathbb{N}^{k}$. The range and source maps being $r|_{^k{\Lambda}}, s|_{^{k}\Lambda}$ respectively.

\end{theorem}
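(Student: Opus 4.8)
The plan is to verify directly that $({}^{k}\Lambda, d|_{\mathbb{N}^k})$ satisfies the axioms of a $k$-graph, where $d|_{\mathbb{N}^k} = \pi \circ d$ and $\pi : \mathbb{N}^{\mathbb{N}}_0 \to \mathbb{N}^k$ is the coordinate projection onto the first $k$ entries. First I would check that ${}^{k}\Lambda$ is actually a subcategory of $\Lambda$: it clearly contains all identity morphisms (vertices have degree $0$), and it is closed under composition since $d$ is a functor and $d(\mu\nu) = d(\mu) + d(\nu)$, so if both $\mu$ and $\nu$ have all coordinates beyond the $k$-th equal to zero, then so does $\mu\nu$. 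Restricting $r$ and $s$ to ${}^{k}\Lambda$ is then unproblematic, and since $\Lambda$ is countable so is ${}^{k}\Lambda$. Next, $d|_{\mathbb{N}^k} = \pi \circ d$ is a functor from ${}^{k}\Lambda$ to $\mathbb{N}^k$ (viewed as a one-object category/monoid) because both $\pi$ and $d$ are monoid homomorphisms on the relevant domains, and on ${}^{k}\Lambda$ the map $\pi$ loses no information: $d(\lambda)$ is recovered from $\pi(d(\lambda))$ by padding with zeros.

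The substantive point is the factorization property. Suppose $\lambda \in {}^{k}\Lambda$ with $d|_{\mathbb{N}^k}(\lambda) = m' + n'$ for some $m', n' \in \mathbb{N}^k$. I would lift $m', n'$ to $m, n \in \mathbb{N}^{\mathbb{N}}_0$ by appending zeros, so that $m + n = d(\lambda)$ (this equality holds in $\mathbb{N}^{\mathbb{N}}_0$ precisely because $\lambda \in {}^{k}\Lambda$ means $d(\lambda)$ is supported on the first $k$ coordinates). Applying the factorization property of $\Lambda$ yields unique $\mu, \nu \in \Lambda$ with $\lambda = \mu\nu$, $d(\mu) = m$, $d(\nu) = n$. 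Since $m$ and $n$ are both supported on the first $k$ coordinates, $\mu, \nu \in {}^{k}\Lambda$, and $d|_{\mathbb{N}^k}(\mu) = \pi(m) = m'$, $d|_{\mathbb{N}^k}(\nu) = \pi(n) = n'$. For uniqueness within ${}^{k}\Lambda$: any other factorization in ${}^{k}\Lambda$ is in particular a factorization in $\Lambda$ with the same degrees $m, n$ (again by padding), so it coincides with $\mu, \nu$ by the uniqueness clause in $\Lambda$. This establishes the factorization property for ${}^{k}\Lambda$.

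I do not expect a genuine obstacle here; the only thing requiring a little care is making the identification between $\mathbb{N}^k$ and the submonoid of $\mathbb{N}^{\mathbb{N}}_0$ supported on the first $k$ coordinates completely explicit, so that the phrases ``$m + n = d(\lambda)$'' and ``$\mu \in {}^{k}\Lambda$'' are unambiguous. One should also remark, for later use, that the inclusion ${}^{k}\Lambda \hookrightarrow \Lambda$ intertwines the degree functors in the sense that $d|_{{}^{k}\Lambda}$ equals $d$ followed by the identification $\mathbb{N}^k \cong \pi(d({}^k\Lambda)) \subseteq \mathbb{N}^{\mathbb{N}}_0$; this compatibility is what will later let us realize $C^*(\Lambda)$ as an inductive limit of the $C^*({}^{k}\Lambda)$. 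Finally I would note that ${}^{k}\Lambda$ inherits row-finiteness and the no-sources condition from $\Lambda$, since for $v \in {}^{k}\Lambda^0 = \Lambda^0$ and $n' \in \mathbb{N}^k$ we have $v\,({}^{k}\Lambda)^{n'} = v\Lambda^{n}$ with $n$ the zero-padding of $n'$, which is finite and nonempty by hypothesis on $\Lambda$.
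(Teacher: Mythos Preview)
Your proposal is correct and follows essentially the same approach as the paper: verify that ${}^{k}\Lambda$ is a subcategory, that the projected degree map is a functor, and that the factorization property is inherited from $\Lambda$ via the identification of $\mathbb{N}^k$ with the submonoid of $\mathbb{N}^{\mathbb{N}}_0$ supported on the first $k$ coordinates. The paper's own proof is extremely terse (it asserts ``it is easy to check that the degree functor works and factorization is unique''), so your version simply supplies the details the paper omits; your additional remarks on row-finiteness and no sources go slightly beyond the theorem as stated but are correct and are used later in the paper.
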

\begin{proof}
As $\Lambda$ is a category and ${}^k\Lambda $ is a subset, it is a subcategory. So we need only that the degree functor works. But as we have defined  $${}^k\Lambda = \{\lambda \in \Lambda : d|_{e^{n}}(\lambda) = 0 \text{ when } n>k \} . $$ So it is easy to check that the degree functor works and factorization is unique. Range and source maps are simply restricted to their inputs as all elements of degree $0$ are in ${}^k\Lambda$.
\end{proof}

Thus we can think of ${}^k\Lambda \subset \Lambda$ for each $k.$ It is also clear that for each $i \leq k$ that ${}^i\Lambda \subset {}^k\Lambda.$ Lastly by considering a morphism $\lambda \in \Lambda$ with $d(\lambda) \in \mathbb{N}^{\mathbb{N}}_{0}$ we get that $\lambda \in {}^k\Lambda$ for some $k$. Thus $\Lambda = \bigcup_{k\in \mathbb{N}} {}^k\Lambda.$
With this equivalence of categories in mind we turn to our ultimate goal of showing $C^*(\Lambda)$ as an inductive limit of $k$-graph $C^*$ algebras.
\begin{theorem}\label{c-k family}
Let $\Lambda$ be a row-finite $\mathbb{N}$-graph with no sources, and $(p,s)$ be a C-K $\Lambda$-family. Then there is a C-K ${}^k\Lambda$-family in $(p,s)$.
\end{theorem}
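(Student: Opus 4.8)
The plan is to obtain the required family by restricting $(p,s)$ to the subcategory ${}^k\Lambda$: set $q_v := p_v$ for $v \in ({}^k\Lambda)^0$ and $t_\lambda := s_\lambda$ for $\lambda \in {}^k\Lambda$, and then check that $(q,t)$ is a Cuntz--Krieger ${}^k\Lambda$-family. Two preliminary observations make the restriction legitimate. First, $({}^k\Lambda)^0 = \Lambda^0$, since every vertex has degree $0$ and hence lies in ${}^k\Lambda$; thus no vertex projections are discarded. Second, ${}^k\Lambda$ is closed under composition (degrees add, so the coordinates beyond the $k$-th stay $0$) and is itself row-finite with no sources, because for $n \in \mathbb{N}^k$ with associated $\bar n \in \mathbb{N}^{\mathbb{N}}_0$ supported on the first $k$ coordinates we have $({}^k\Lambda)^n(v) = \Lambda^{\bar n}(v)$, which is finite and nonempty by the hypotheses on $\Lambda$.

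With this in hand, relations (CK1)--(CK3) are immediate: each is a statement about individual elements, or composable pairs, of ${}^k\Lambda$, and for such elements it is literally the corresponding relation already satisfied by $(p,s)$ as a $\Lambda$-family. The only relation needing attention is (CK4). I would fix $v \in \Lambda^0$ and $n \in \mathbb{N}^k$, pass to the element $\bar n \in \mathbb{N}^{\mathbb{N}}_0$ supported on the first $k$ coordinates, and use the edge-set identification $v({}^k\Lambda)^n = v\Lambda^{\bar n}$: any $\lambda \in \Lambda$ with $d(\lambda) = \bar n$ automatically satisfies $d|_{e^m}(\lambda) = 0$ for $m > k$ and so lies in ${}^k\Lambda$, and conversely a $\lambda \in {}^k\Lambda$ with $\pi(d(\lambda)) = n$ must have $d(\lambda) = \bar n$. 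Hence the finite sum $\sum_{\lambda \in v({}^k\Lambda)^n} t_\lambda t_\lambda^*$ coincides term-for-term with $\sum_{\lambda \in v\Lambda^{\bar n}} s_\lambda s_\lambda^*$, which equals $p_v = q_v$ by (CK4) for $(p,s)$.

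I do not expect a genuine obstacle here; essentially everything is bookkeeping. The one point carrying any content is the identification $v({}^k\Lambda)^n = v\Lambda^{\bar n}$ used for (CK4) (and, in the same way, $({}^k\Lambda)^n(v) = \Lambda^{\bar n}(v)$ used to transfer row-finiteness and the no-sources property), which follows directly from the definition of ${}^k\Lambda$ and the factorization property. As a final remark one can observe that, by the universal property of $C^*({}^k\Lambda)$, this restricted family induces a $\ast$-homomorphism $C^*({}^k\Lambda) \to C^*(\Lambda)$, which is the first step toward exhibiting $C^*(\Lambda)$ as an inductive limit of the $C^*({}^k\Lambda)$.
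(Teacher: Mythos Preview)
Your proposal is correct and follows exactly the paper's approach: restrict $(p,s)$ to ${}^k\Lambda$, note that (CK1)--(CK3) are inherited because ${}^k\Lambda$ is a subcategory, and verify (CK4) via the identification $v({}^k\Lambda)^n = v\Lambda^{\bar n}$ for $n \in \mathbb{N}^k$. Your write-up is in fact more detailed than the paper's, which dispatches (CK4) in a single clause.
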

\begin{proof}
Consider the family $(p,s)_k := \{p_v , s_\lambda : v \in \Lambda^0, \lambda \in {}^k\Lambda \}.$ 
Then C-K (1-3) hold by ${}^k\Lambda$ being a subcategory. As for C-K(4) it holds from choice of $n \in \mathbb{N}^k$ giving all of the necessary elements contained in ${}^k\Lambda.$ So it is indeed a C-K ${}^k\Lambda$ family. 
\end{proof}

As ${}^k \Lambda$ is a $k$-graph, there exists a $k$-graph $C^{*}$-algebra, $C^{*}({}^k\Lambda)$ for each $k$.

\begin{corollary}
Let $\Lambda$ be a row-finite $\mathbb{N}$-graph with no sources, $i\leq k$ and $(p,s)$ be a C-K ${}^k\Lambda$-family. Then there is a C-K ${}^i\Lambda$-family in $(p,s)$.
\end{corollary}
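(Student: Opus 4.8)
The plan is to reduce this corollary to Theorem~\ref{c-k family} by exploiting the containment ${}^i\Lambda \subset {}^k\Lambda$ noted just before that theorem. First I would observe that, because $i \leq k$, every $\lambda \in \Lambda$ satisfying $d|_{e^n}(\lambda) = 0$ for $n > i$ automatically satisfies $d|_{e^n}(\lambda) = 0$ for $n > k$, so ${}^i\Lambda$ is a (full-on-vertices) subcategory of ${}^k\Lambda$, and in fact ${}^i\Lambda = {}^i({}^k\Lambda)$ when we regard ${}^k\Lambda$ as a $k$-graph and apply the analogous truncation for $i$-graphs inside $k$-graphs.

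Next, given a C-K ${}^k\Lambda$-family $(p,s)$, I would set $(p,s)_i := \{p_v, s_\lambda : v \in \Lambda^0, \lambda \in {}^i\Lambda\}$ and check the four Cuntz--Krieger relations exactly as in the proof of Theorem~\ref{c-k family}. Relations (CK1)--(CK3) are inherited immediately because ${}^i\Lambda$ is a subcategory of ${}^k\Lambda$ and these relations only involve composition, adjoints, and source projections, all of which are preserved under restriction. For (CK4), the key point is that for $v \in \Lambda^0$ and $n \in \mathbb{N}^i$ the set $v\Lambda^n$ is entirely contained in ${}^i\Lambda$ — indeed any $\lambda$ with $d(\lambda) = n$ and $n$ supported on the first $i$ coordinates has $d|_{e^m}(\lambda) = 0$ for $m > i$ — so the defining sum $p_v = \sum_{\lambda \in v\Lambda^n} s_\lambda s_\lambda^*$, which holds in the ${}^k\Lambda$-family since $\mathbb{N}^i \subset \mathbb{N}^k$, is literally a sum over elements of the restricted family. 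Hence $(p,s)_i$ is a C-K ${}^i\Lambda$-family sitting inside $(p,s)$.

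Alternatively, and perhaps more cleanly, one could simply note that Theorem~\ref{c-k family} applies verbatim with $\Lambda$ replaced by the $k$-graph ${}^k\Lambda$: since ${}^k\Lambda$ is a row-finite $k$-graph with no sources (the no-sources hypothesis on $\Lambda$ restricts to ${}^k\Lambda$ because $v\Lambda^n \neq \emptyset$ for $n \in \mathbb{N}^k$), the truncation ${}^i({}^k\Lambda) = {}^i\Lambda$ carries a C-K family inside any C-K ${}^k\Lambda$-family. I do not anticipate a genuine obstacle here; the only thing requiring a moment's care is verifying that the no-sources and row-finiteness conditions are genuinely inherited by ${}^k\Lambda$ so that the $k$-graph $C^*$-algebra $C^*({}^k\Lambda)$ and Theorem~\ref{c-k family} are in force — but row-finiteness of $v\Lambda^n$ for $n \in \mathbb{N}^k$ is immediate from that of $\Lambda$, and no-sources likewise, so the argument goes through directly.
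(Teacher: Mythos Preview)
Your proposal is correct and follows essentially the same approach as the paper, which simply says the proof is the same as in Theorem~\ref{c-k family}: restrict the family to $\{p_v, s_\lambda : \lambda \in {}^i\Lambda\}$, get (CK1)--(CK3) from the subcategory inclusion, and get (CK4) because any $n \in \mathbb{N}^i$ keeps $v\Lambda^n$ inside ${}^i\Lambda$. Your alternative phrasing via ${}^i({}^k\Lambda) = {}^i\Lambda$ is a slight repackaging of the same idea.
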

Proof is the same as in Theorem \ref{c-k family}.
\begin{theorem}
The family of $C^{*}$-algebras $\{C^{*}({}^k\Lambda)\}$ form a directed system under the canonical inclusion maps. 
\end{theorem}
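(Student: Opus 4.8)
The plan is to establish that $\{C^*({}^k\Lambda)\}$ forms a directed system by exhibiting the connecting $*$-homomorphisms $\iota_{k}^{k+1}\colon C^*({}^k\Lambda)\to C^*({}^{k+1}\Lambda)$ (and more generally $\iota_i^k$ for $i\le k$) and checking they are compatible. First I would note that, since ${}^i\Lambda\subset{}^k\Lambda$ as a subcategory whenever $i\le k$, the generating Cuntz--Krieger ${}^k\Lambda$-family $\{p_v, s_\mu : v\in\Lambda^0,\ \mu\in {}^k\Lambda\}$ inside $C^*({}^k\Lambda)$ restricts (by the corollary following Theorem~\ref{c-k family}) to a Cuntz--Krieger ${}^i\Lambda$-family inside $C^*({}^k\Lambda)$. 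By the universal property of $C^*({}^i\Lambda)$, this restricted family induces a $*$-homomorphism $\iota_i^k\colon C^*({}^i\Lambda)\to C^*({}^k\Lambda)$ sending the canonical generators of $C^*({}^i\Lambda)$ to the corresponding generators of $C^*({}^k\Lambda)$.

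Next I would verify that these maps are injective, so that they are genuine inclusions. The natural tool is the Gauge-Invariant Uniqueness Theorem for $k$-graph $C^*$-algebras applied to $C^*({}^i\Lambda)$: the restricted family in $C^*({}^k\Lambda)$ has all vertex projections nonzero (they are the nonzero generators of $C^*({}^k\Lambda)$), and it admits a gauge action — namely the restriction to $\mathbb{T}^i\subseteq\mathbb{T}^k$ of the gauge action on $C^*({}^k\Lambda)$, reparametrized via the canonical projection $\mathbb{N}^k\to\mathbb{N}^i$. One checks this scaled action is compatible with the degree functor on ${}^i\Lambda$, so GIUT forces $\iota_i^k$ to be injective. (If one prefers, the Cuntz--Krieger Uniqueness Theorem works equally well once aperiodicity is arranged, but the gauge-invariant version is cleaner since it needs no hypothesis on $\Lambda$.)

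Finally I would check the directed-system compatibility conditions: $\iota_k^k = \mathrm{id}$, and $\iota_j^k\circ\iota_i^j = \iota_i^k$ whenever $i\le j\le k$. Both are immediate on generators — $\iota_i^k$ sends $p_v\mapsto p_v$ and $s_\mu\mapsto s_\mu$ for $\mu\in{}^i\Lambda$, and composing the inclusions ${}^i\Lambda\hookrightarrow{}^j\Lambda\hookrightarrow{}^k\Lambda$ of categories gives the inclusion ${}^i\Lambda\hookrightarrow{}^k\Lambda$ — and since the $p_v$ and $s_\mu$ generate $C^*({}^i\Lambda)$ as a $C^*$-algebra, agreement on generators gives agreement everywhere. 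Since $\mathbb{N}$ is directed, this completes the proof. The main obstacle is the injectivity step: one must be careful that the gauge action used for GIUT is correctly set up — the point is that the $i$-th gauge torus embeds into the $k$-th one in a way matching the projection $\mathbb{N}^k\twoheadrightarrow\mathbb{N}^i$ on degrees, so the restricted family is equivariant for the ${}^i\Lambda$-gauge action. Everything else is formal bookkeeping with the universal property.
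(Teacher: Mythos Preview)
Your proposal is correct and follows essentially the same route as the paper: both construct $\iota_i^k$ via the universal property applied to the restricted Cuntz--Krieger family, both establish injectivity by invoking the gauge-invariant uniqueness theorem for $k$-graphs (the paper writes the embedded torus action explicitly as $t\mapsto\alpha_{(t,1,\dots,1)}$, which is exactly your ``restriction to $\mathbb{T}^i\subseteq\mathbb{T}^k$''), and both check compatibility on generators. Aside from notational conventions, there is no substantive difference.
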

\begin{proof}
Define inclusion maps $\iota _{i,k} : C^{*}({}^k\Lambda) \rightarrow C^{*}({}^i\Lambda) $ by $ \iota_{i.k}(s_{\lambda}) = s_{\lambda}$. We need to show these maps are isomorphisms onto their range. First note that $\{\iota (s_{\lambda}) : \lambda \in {}^k\Lambda \}$ forms a C-K ${}^k\Lambda$ family (this is standard to check). Further each $\iota(p_v) \neq 0$ as each ${}^j\Lambda$ is row-finite with no sources as $\Lambda$ is. 

We now show there is a action of $\mathbb{T}^k$ on $\iota_{i.k}(C^{*}({}^i\Lambda)).$ For $t \in \mathbb{T}^k $ use the mapping $t \mapsto \alpha_{(t,1, 1, 1, . . . .,1)}$ where $\alpha_{(t,1,1,...,1)}$ is the automorphism from the gauge action on $C^{*}({}^i\Lambda)$.
Denote $\alpha_{(t,1,1,1,...,1)} = \beta_{t}$.

Now we claim that $\beta_{t} \circ \iota_{i.k} = \iota_{i,k} \circ \alpha_{t}.$ Notice that for an element \\$\sum _{i=1}^{n} s_{\lambda_{i}}s_{\mu_{i}}^{*} \in C^{*}({}^k\Lambda), $ that $ \beta_{t}\circ \iota_{i.k}(\sum^{n}_{i=1}s_{\lambda_{i}}s_{\mu_{i}}^{*}) \in \iota_{i.k} (C^{*}({}^k\Lambda).$ Since $\beta$ is continuous and $ C^{*}({}^k\Lambda)$ is closed we get that, $\beta_{t}(\iota_{i.k} (C^{*}({}^k\Lambda)) \subseteq \iota_{i.k} (C^{*}({}^k\Lambda)) $. Further note that $\beta_{\overline{t}}$ is an inverse. Now it is routine to check our claim. Further $\iota_{i.k}(p_{v}) = p_{v}  \neq 0. $ Thus by the gauge invariant uniqueness theorem \cite[Theorem 3.4]{KumjianPask}, $\iota_{i,k}$ is injective. 

It is easy to show that $\iota_{i.k} \circ \iota_{k,j} = \iota_{i,j}.$ Thus we have directed system.
\end{proof}
\begin{theorem} \label{limit}
Let $\Lambda$ be a row-finite $\mathbb{N}$-graph with no sources. Then $C^{*}(\Lambda) = \overline{\bigcup_{k\in \mathbb{N}} C^{*}({}^k\Lambda)}$.
\end{theorem}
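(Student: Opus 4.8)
The plan is to identify $C^*(\Lambda)$ with the inductive limit of the directed system $\{C^*({}^k\Lambda)\}$ constructed above. First I would produce, for each $k$, an embedding $\phi_k \colon C^*({}^k\Lambda) \to C^*(\Lambda)$. By Theorem~\ref{c-k family} the elements $\{p_v, s_\lambda : v \in \Lambda^0,\ \lambda \in {}^k\Lambda\}$ of $C^*(\Lambda)$ form a Cuntz--Krieger ${}^k\Lambda$-family, so the universal property of $C^*({}^k\Lambda)$ gives a $*$-homomorphism $\phi_k$ with $\phi_k(s_\lambda) = s_\lambda$. These are compatible with the connecting maps of the directed system (immediate on generators, hence everywhere), so the universal property of the inductive limit yields a $*$-homomorphism $\phi \colon \varinjlim C^*({}^k\Lambda) \to C^*(\Lambda)$.

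Next I would verify that each $\phi_k$ is injective by applying the Gauge-Invariant Uniqueness Theorem for $k$-graphs, \cite[Theorem 3.4]{KumjianPask}. Two ingredients are needed. The vertex projections are nonzero in $C^*(\Lambda)$: by Theorem~\ref{Rep} there is a representation with $S_v \neq 0$ for every $v$, so $p_v = s_v \neq 0$ in the universal algebra. And there is a suitable gauge action: exactly as in the proof of the directed-system theorem above, universality shows $\{t^{d(\lambda)} s_\lambda\}$ is a Cuntz--Krieger $\Lambda$-family for each $t \in \mathbb{T}^\omega = \prod_{\mathbb{N}} \mathbb{T}$, giving automorphisms $\gamma_t$ of $C^*(\Lambda)$; strong continuity follows from an $\varepsilon/3$-estimate on the dense span of the $s_\lambda s_\mu^*$, each of which depends on only finitely many coordinates of $t$. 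Restricting $\gamma$ along the inclusion $\mathbb{T}^k \hookrightarrow \mathbb{T}^\omega$, $s \mapsto (s,1,1,\dots)$, gives an action of $\mathbb{T}^k$ that preserves $\phi_k(C^*({}^k\Lambda))$ and implements the $k$-graph gauge action on it. With these two ingredients \cite[Theorem 3.4]{KumjianPask} forces $\phi_k$ to be injective.

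Finally I would show $\phi$ is an isomorphism. Being injective, each $\phi_k$ is isometric; since $\bigcup_k C^*({}^k\Lambda)$ is dense in $\varinjlim C^*({}^k\Lambda)$ and $\phi$ restricts to the isometric maps $\phi_k$, $\phi$ is isometric, hence injective with closed range. For density of the range, recall $\Lambda = \bigcup_k {}^k\Lambda$, so every generator $s_\lambda$ of $C^*(\Lambda)$ lies in some $\phi_k(C^*({}^k\Lambda))$; hence $\bigcup_k \phi_k(C^*({}^k\Lambda))$ contains all the generators and its closure, a $C^*$-subalgebra, equals $C^*(\Lambda)$. Thus $\phi$ is a surjective isometry, and under the identification $C^*({}^k\Lambda) = \phi_k(C^*({}^k\Lambda)) \subseteq C^*(\Lambda)$ we obtain $C^*(\Lambda) = \overline{\bigcup_k C^*({}^k\Lambda)}$. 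The main obstacle is the injectivity of the $\phi_k$, i.e.\ faithfulness of the copy of $C^*({}^k\Lambda)$ inside $C^*(\Lambda)$; the two ingredients it rests on (nonvanishing of vertex projections and the gauge action on $C^*(\Lambda)$) are close variants of earlier arguments, so in the end most of the effort goes into assembling them and into checking strong continuity of the $\mathbb{T}^\omega$-action for the product topology.
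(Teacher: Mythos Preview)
Your argument is correct, but it proceeds differently from the paper. The paper shows that $\overline{\bigcup_k C^*({}^k\Lambda)}$ itself satisfies the universal property defining $C^*(\Lambda)$: given an arbitrary Cuntz--Krieger $\Lambda$-family $\{t_\lambda\}$ in some $B$, it uses the universal property of each $C^*({}^k\Lambda)$ to get maps $\pi_k$ into $B$, checks compatibility, and then invokes the universal property of the inductive limit (\cite[6.2.2(ii), 6.2.4]{Rordam}) to assemble a surjection $\pi$ onto $C^*\{t_\lambda\}$. Uniqueness of the universal algebra then gives the identification. You instead build the comparison map $\phi\colon \varinjlim C^*({}^k\Lambda)\to C^*(\Lambda)$ and prove it is an isomorphism directly, with injectivity of each $\phi_k$ coming from the $k$-graph GIUT applied inside $C^*(\Lambda)$. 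The paper's route is a bit more economical in that it never needs the gauge action on $C^*(\Lambda)$ or its strong continuity (both of which appear \emph{after} Theorem~\ref{limit} in the paper), relying only on the injectivity of the connecting maps $\iota_{i,k}$ already established in the preceding theorem. Your route is more concrete and essentially re-runs that GIUT argument one level up; it works, but note that in the paper's logical order you would be importing the $\mathbb{T}^\omega$-action and its continuity before they are formally introduced.
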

\begin{proof}
We know there is a directed system $\{C^{*}({}^k\Lambda)\}$ with inductive limit \\$\overline{\bigcup _{k\in \mathbb{N}} C^{*}({}^k\Lambda)}$ by \cite[Theorem 6.2.4 i]{Rordam} since each inclucion map is injective. It remains to show that this limit is the universal $C^{*}$-algebra generated by a C-K $\Lambda$ family. Suppose that $\{t_{\lambda} : \lambda \in \Lambda \}$ is a C-K $\Lambda$ family in a $C^{*}$-algebra $B$. Then we wish to show there is a homomorphism $\pi : \overline{\bigcup _{k\in \mathbb{N}} C^{*}({}^k\Lambda)} \rightarrow C^{*}\{t_{\lambda} : \lambda \in \Lambda \}$. We know that $\Lambda = \bigcup_{k \in \mathbb{N}}  {{}^k\Lambda}$, so it is not hard to show that $C^{*}\{t_{\lambda} : \lambda \in \Lambda \} = \overline{\bigcup_{k \in \mathbb{N}} C^{*} \{t_{\lambda} \in {}^k\Lambda \} }$, using inclusion mappings to make a directed system. Also by Theorem \ref{c-k family},  $\{t_{\lambda}:\lambda \in {}^k\Lambda \}$ is a C-K ${}^k\Lambda$ family. Thus by universality of $C^{*}({}^k\Lambda)$ there exists $*$-homomorphisms $\pi_{k}: C^{*}({}^k\Lambda) \rightarrow C^{*}(\{t_{\lambda}:\lambda \in  {}^k\Lambda\}.$ We also note that $C^{*}\{t_{\lambda} : \lambda \in \Lambda \} = \overline{\bigcup_{k \in \mathbb{N}}{C^{*}\{t_{\lambda}:\lambda \in {}^k\Lambda \}}}$, under the inclusion mappings $\phi_{k} :C^{*}\{t_{\lambda}:\lambda \in {}^k\Lambda \} \rightarrow C^{*}\{t_{\lambda} : \lambda \in \Lambda\} $. Thus $\phi_{k} \circ \pi_{k}$ gives a $*$-homomorphism $C^{*}({}^k\Lambda) \rightarrow C^{*}\{t_{\lambda} : \lambda \in \Lambda\}$. Additionally, this mapping is commutative in the sense that $\phi_{k} \circ \pi_{k} = \phi_{k+1} \circ \pi_{k+1} \circ \iota_{k+1,k} $. Thus by Definition 6.2.2(ii) of \cite{Rordam}, there is one and only one morphism $\pi:\overline{\bigcup_{k\in \mathbb{N}} C^{*}({}^k\Lambda)} \rightarrow C^{*}\{t_{\lambda} : \lambda \in \Lambda \}$ so that $\pi \circ \iota_{k}  = \phi_k \circ \pi_{k}$. 
That $\pi$ is surjective follows from the fact that  $\pi_{k}$ is surjective and so we have $ C^{*}\{t_{\lambda} : \lambda \in \Lambda \} = \overline{\bigcup_{k \in \mathbb{N}} C^{*} \{t_{\lambda} \in {}^k\Lambda \} } = \overline{\bigcup_{k \in \mathbb{N}}\pi_{k}(C^{*}({}^k\Lambda)} = \pi(C^{*}(\Lambda))$. So by Proposition 6.2.4 of \cite{Rordam} $\pi$ is surjective. Thus $ \pi $ gives the mapping we want and $\overline{\bigcup _{k\in \mathbb{N}} C^{*}({}^k\Lambda)}$ is a universal $C^{*}$-algebra for a C-K $\Lambda$ family. So by uniqueness of the universal $C^{*}$-algebra $C^{*}(\Lambda) = \overline{\bigcup _{k\in \mathbb{N}} C^{*}({}^k\Lambda)}.$
\end{proof}

As with graph $C^*$-algebras and $k$-graph $C^*$-algebras there is a gauge action on our $\mathbb{N}$-graph algebra.
\begin{definition}
By the universal property of $C^{*}(\Lambda)$ there is a canonical action of $\mathbb{T}^{\omega}$ called the \emph{gauge action}: $\alpha : \mathbb{T}^{\omega} \mapsto \text{ Aut}C^{*}(\Lambda)$ defined for $t = (t_{1}, t_2 , . . . ..) \in \mathbb{T}^{\omega}$ and $s_{\lambda} \in C^{*}(\lambda) $ by $$ \alpha_{t}(s_{\lambda}) = t^{d(\lambda)}s_{\lambda}.$$ Where $t^{m} = t_{i}^{m_{1}}t_{2}^{m_{2}}....$ for $m = (m_1, m_2, ... ) \in \mathbb{N^{N}}_{0}.$ 
\end{definition}
\begin{theorem}
The gauge action on $C^{*}(\Lambda)$ is strongly continuous in the product topology.

\end{theorem}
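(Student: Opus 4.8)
The plan is to verify continuity of each orbit map $t \mapsto \alpha_t(a)$, for $a \in C^*(\Lambda)$, by first checking it on a dense $*$-subalgebra and then promoting it to all of $C^*(\Lambda)$ using the fact that every $\alpha_t$ is isometric. Recall that finite linear combinations of the form $\sum_{i=1}^n c_i\, s_{\lambda_i} s_{\mu_i}^*$, with $c_i \in \mathbb{C}$ and $s(\lambda_i) = s(\mu_i)$, form a dense $*$-subalgebra of $C^*(\Lambda)$: this is closed under multiplication by the formula for $s_\lambda^* s_\mu$ proved above, and it is dense because the $s_\lambda$ generate $C^*(\Lambda)$, exactly as in the $k$-graph case.

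First I would treat a single element $a = \sum_{i=1}^n c_i\, s_{\lambda_i} s_{\mu_i}^*$ of this dense subalgebra. Applying the definition of the gauge action and $\alpha_t(s_\mu^*) = \overline{t^{d(\mu)}}\, s_\mu^* = t^{-d(\mu)} s_\mu^*$, we get
$$ \alpha_t(a) = \sum_{i=1}^n c_i\, t^{\,d(\lambda_i) - d(\mu_i)}\, s_{\lambda_i} s_{\mu_i}^*, $$
so $t \mapsto \alpha_t(a)$ is a finite sum of the fixed vectors $c_i\, s_{\lambda_i} s_{\mu_i}^*$ scaled by the scalar functions $t \mapsto t^{\,d(\lambda_i)-d(\mu_i)}$. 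Here each exponent $d(\lambda_i) - d(\mu_i)$ lies in $\mathbb{Z}^\infty$, i.e. has only finitely many nonzero coordinates, and there are only finitely many indices $i$; hence there is a finite set $F \subset \mathbb{N}$ such that every one of these monomials depends only on the coordinates $\{t_j : j \in F\}$. Since the coordinate projections $\mathbb{T}^\omega \to \mathbb{T}$ are continuous in the product topology, each such monomial, being a Laurent monomial in finitely many of the $t_j$, is continuous, and therefore so is the finite sum $t \mapsto \alpha_t(a)$. This is precisely the point at which the restriction to $\mathbb{N}^{\mathbb{N}}_0$ (rather than all of $\mathbb{N}^{\mathbb{N}}$) is used, and it is the only genuinely delicate step in the argument.

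Finally, for an arbitrary $a \in C^*(\Lambda)$ and $\varepsilon > 0$, choose $b$ in the dense subalgebra with $\|a - b\| < \varepsilon/3$, and fix $t_0 \in \mathbb{T}^\omega$. By the previous paragraph there is a product-open neighbourhood $U$ of $t_0$ with $\|\alpha_t(b) - \alpha_{t_0}(b)\| < \varepsilon/3$ for all $t \in U$. Since each $\alpha_t$ is a $*$-automorphism, hence isometric, for $t \in U$ we have
$$ \|\alpha_t(a) - \alpha_{t_0}(a)\| \le \|\alpha_t(a - b)\| + \|\alpha_t(b) - \alpha_{t_0}(b)\| + \|\alpha_{t_0}(b - a)\| < \frac{\varepsilon}{3} + \frac{\varepsilon}{3} + \frac{\varepsilon}{3} = \varepsilon. $$
Thus $t \mapsto \alpha_t(a)$ is continuous at every $t_0 \in \mathbb{T}^\omega$, which is the asserted strong continuity. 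Everything outside the middle paragraph is the standard $\varepsilon/3$-argument valid for any uniformly bounded family of maps that is continuous on a dense subset.
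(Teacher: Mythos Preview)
Your proof is correct and follows essentially the same strategy as the paper's: both use the density of $\operatorname{span}\{s_\lambda s_\mu^*\}$, continuity on such finite sums, and an $\varepsilon/3$ argument exploiting that each $\alpha_t$ is isometric. The only minor difference is that the paper handles the middle term by observing that a finite sum $\sum s_{\lambda_i}s_{\mu_i}^*$ lies in some $C^*({}^k\Lambda)$ and then invoking the known strong continuity of the $\mathbb{T}^k$-gauge action there, whereas you unpack this directly by noting that each exponent $d(\lambda_i)-d(\mu_i)$ has finite support, so the orbit map factors through finitely many coordinate projections of $\mathbb{T}^\omega$; your version is slightly more self-contained but the content is the same.
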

\begin{proof}

Take a sequence $\{t_{n}\} \in \mathbb{T}^{\omega} $converging to $t.$ Then we need to show that $\alpha_{t_{n}}$ converges to $\alpha_{t}.$ Take $a \in C^{*}(\Lambda). $ Then we know that $a \in \overline{span}\{s_{\lambda}s_{\mu}^{*} : s(\lambda) = s(\mu)\}.$ Take $\epsilon > 0.$ Then   $$||\alpha_{t_{n}}(a) - \alpha_{t}(a)|| \leq||\alpha_{t_{n}}(a) - \alpha_{t_{n}}(\sum s_{\lambda_i}s_{\mu_i{}}^{*})|| + $$ $$|| \alpha_{t_{n}}(\sum s_{\lambda_i}s_{\mu_i{}}^{*}) - \alpha_{t}(\sum s_{\lambda_i}s_{\mu_i{}}^{*})||+ ||\alpha_{t}(\sum s_{\lambda_i}s_{\mu_i{}}^{*}) - \alpha_{t}(a)||$$
By strong continuity of the gauge action on $k$-graph $C^{*}$-algebras and the fact that $\alpha_{t}$ and $\alpha_{t_{n}}$ are automorphisms, and density of $\{s_{\lambda}s_{\mu}^{*} : s(\lambda) = s(\mu)\}$ in $C^{*}(\Lambda)$ we can find an $n \in \mathbb{N}$ and $\sum s_{\lambda_i}s_{\mu_i{}}^{*} $ so that each of these are less than $\epsilon / 3.$

\end{proof}
\begin{corollary}
 There is a faithful conditional expectation $\Phi : C^*(\Lambda) \mapsto C^*(\Lambda)^\gamma$ defined by $\Phi(x) = \int_{\mathbb{T}^k} \alpha_t (x) \,dt$, where $C^*(\Lambda)^\gamma $ is the fixed point algebra of the gauge action.
\end{corollary}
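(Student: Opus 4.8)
The plan is to obtain the conditional expectation by averaging the gauge action over the compact group $\mathbb{T}^\omega$, exactly as in the $k$-graph case, with the caveat that the measure should be normalized Haar measure on $\mathbb{T}^\omega$ rather than Lebesgue measure on $\mathbb{T}^k$ (the displayed formula in the statement is really shorthand for integration against Haar measure on the full torus). First I would note that $\mathbb{T}^\omega$ is a compact group in the product topology, so it carries a normalized Haar measure, and that by the previous theorem the map $t \mapsto \alpha_t(x)$ is norm-continuous for each fixed $x \in C^*(\Lambda)$; hence the vector-valued integral $\Phi(x) := \int_{\mathbb{T}^\omega}\alpha_t(x)\,dt$ exists as a Bochner integral and defines a bounded linear map of norm $\le 1$.

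Next I would check the elementary algebraic properties. Contractivity and linearity are immediate from the integral. Translation-invariance of Haar measure gives $\alpha_s \circ \Phi = \Phi = \Phi \circ \alpha_s$ for every $s$, so $\Phi$ takes values in the fixed-point algebra $C^*(\Lambda)^\gamma$, and $\Phi$ restricted to $C^*(\Lambda)^\gamma$ is the identity (since $\alpha_t(x)=x$ there). Positivity follows because each $\alpha_t$ is a $*$-automorphism, hence positive, and the integral of a positive-valued function is positive; the $C^*(\Lambda)^\gamma$-bimodule property $\Phi(axb)=a\Phi(x)b$ for $a,b$ fixed likewise follows from pulling the fixed elements out of the integral. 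Together these say $\Phi$ is a conditional expectation onto $C^*(\Lambda)^\gamma$.

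The one point needing genuine argument is faithfulness: if $x \ge 0$ and $\Phi(x^*x)=0$ then $x=0$. I would prove this by reducing to the $k$-graph case via Theorem \ref{limit}. Since $C^*(\Lambda)=\overline{\bigcup_k C^*({}^k\Lambda)}$, an arbitrary $x$ is a norm-limit of elements of the $C^*({}^k\Lambda)$; more usefully, $\Phi$ is compatible with the conditional expectations $\Phi_k$ on the $k$-graph algebras in the sense that $\Phi$ restricts to $\Phi_k$ on $C^*({}^k\Lambda)$ (because averaging over $\mathbb{T}^\omega$ against Haar measure, written as $\mathbb{T}^k \times \mathbb{T}^{\mathbb{N}\setminus\{1,\dots,k\}}$, and the gauge action is trivial in the coordinates past $k$ on $C^*({}^k\Lambda)$, so the extra integration is a no-op). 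Thus for $x \in C^*({}^k\Lambda)$, $\Phi(x^*x)=\Phi_k(x^*x)$, which is faithful by the $k$-graph result \cite[Lemma 3.3, Remark 3.2]{KumjianPask}. For general $x$, approximate: if $\Phi(x^*x)=0$, pick $x_k \in C^*({}^k\Lambda)$ with $x_k \to x$; then $\Phi_k(x_k^*x_k)=\Phi(x_k^*x_k)\to \Phi(x^*x)=0$, and I would use a standard $C^*$-argument (the Cauchy–Schwarz inequality for the positive map $\Phi$, $\Phi(y^*y)=0 \Rightarrow \Phi(y^*z)=0$, together with $\|\Phi(x^*x)\|$ dominating a suitable expression) to conclude $x=0$. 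Concretely, since $\Phi$ is faithful on the dense subalgebra in the sense above and $\|x_k\|$ is bounded, $\|x\|^2 = \lim\|x_k\|^2$ can be controlled: for a faithful conditional expectation onto a $C^*$-subalgebra the estimate $\|x\|^2 = \|x^*x\| \le$ something involving $\Phi$ is not automatic, so the cleanest route is actually to invoke that $\Phi$ is faithful on $C^*(\Lambda)$ iff it is faithful on a dense $*$-subalgebra closed under the relevant operations — I would instead argue directly that $\ker\Phi \cap \{x^*x\}$ is a hereditary cone and use that $x^*x \in \overline{\bigcup_k C^*({}^k\Lambda)^+}$ together with norm-continuity of $\Phi$ to push the vanishing down. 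The main obstacle, then, is making this last limiting argument for faithfulness rigorous without circularity; everything else is a routine transcription of the $k$-graph proof.
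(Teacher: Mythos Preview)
Your construction of $\Phi$ and verification of the conditional-expectation axioms is correct and is essentially what lies inside the result the paper cites: the paper's own proof is the single sentence ``Apply \cite[Corollary 5]{Raeburn} to our strongly continuous gauge action,'' invoking a general fact that any strongly continuous action of a compact group on a $C^*$-algebra yields a faithful conditional expectation onto the fixed-point algebra by averaging against Haar measure. So you are unpacking that citation rather than taking a different route.

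Where you diverge, and where there is a genuine gap, is in the faithfulness argument. Your plan is to reduce to faithfulness of the $\Phi_k$ on the subalgebras $C^*({}^k\Lambda)$ and then pass to the limit, but as you yourself note, this limiting step does not close: from $\Phi_k(x_k^*x_k)\to 0$ you cannot conclude $x_k\to 0$, because faithfulness of a conditional expectation gives no quantitative inequality of the form $\|y\|^2\le C\,\|\Phi(y^*y)\|$. The inductive-limit approach is simply the wrong tool here.

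The clean argument, which is what underlies the cited result, bypasses the inductive limit entirely. Suppose $a\ge 0$ and $\Phi(a)=0$. For any state $\varphi$ on $C^*(\Lambda)$,
\[
0=\varphi(\Phi(a))=\int_{\mathbb{T}^\omega}\varphi(\alpha_t(a))\,dt.
\]
The integrand $t\mapsto \varphi(\alpha_t(a))$ is continuous (by the strong continuity you already established) and nonnegative (since each $\alpha_t$ is positive), so it vanishes identically; in particular $\varphi(a)=0$. As $\varphi$ was arbitrary, $a=0$. This uses only strong continuity of the action and compactness of the group, and it is exactly why the paper can dispose of the corollary in one line.
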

\begin{proof}
Apply \cite[Corollary 5]{Raeburn} to our strongly continuous gauge action.
\end{proof}

We show in the next theorem that we can consider these $\mathbb{N}$-graph $C^*$-algebras as $P$-graph $C^*$-algebras in the sense of \cite{BrownloweSimsVittadello}.
\begin{theorem}
 Let $\Lambda$ be a row-finite $\mathbb{N}$-graph with no sources. Then the co-universal $C^*$-algebra $C_{min}^* (\Lambda)$ obtained from \cite[Theorem 5.3]{BrownloweSimsVittadello} by regarding $\Lambda$ as an $\mathbb{N}^{\mathbb{N}}_{0}$-graph is canonically isomorphic to the $\mathbb{N}$-graph $C^*$-algebra $C^*(\Lambda).$
\end{theorem}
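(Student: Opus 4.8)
The plan is to compare $C^*(\Lambda)$ with the co-universal $C^*$-algebra $C^*_{\min}(\Lambda)$ of \cite[Theorem 5.3]{BrownloweSimsVittadello} by producing a surjection in one direction and then showing it is injective using a gauge-invariant uniqueness argument. First I would verify that an $\mathbb{N}$-graph $\Lambda$, when its degree functor is read as taking values in the semigroup $P = \mathbb{N}^{\mathbb{N}}_0$, is indeed a $P$-graph in the sense of \cite{BrownloweSimsVittadello}: this amounts to checking that $\mathbb{N}^{\mathbb{N}}_0$ embeds as a generating submonoid of the group $\mathbb{Z}^\infty$ (cf. Example \ref{ex}(ii)), that $P$ is cancellative with $P \cap P^{-1} = \{0\}$ (so it gives a partial order), and that row-finiteness and having no sources are exactly the hypotheses needed in their construction. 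Once $\Lambda$ is a row-finite $P$-graph with no sources, \cite[Theorem 5.3]{BrownloweSimsVittadello} gives us $C^*_{\min}(\Lambda)$ together with a distinguished Cuntz--Krieger-type $\Lambda$-family whose vertex projections are all nonzero.

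Next I would build the map. Since the generating family for $C^*_{\min}(\Lambda)$ satisfies relations (CK1)--(CK4) of Definition \ref{CKR} (the Cuntz--Krieger relations for a row-finite $P$-graph with no sources coincide with ours), the universal property of $C^*(\Lambda)$ yields a $*$-homomorphism $\rho \colon C^*(\Lambda) \to C^*_{\min}(\Lambda)$ sending generators to generators; it is surjective because the image contains a generating family. Conversely, $C^*(\Lambda)$ itself carries a Cuntz--Krieger $\Lambda$-family with all vertex projections nonzero (e.g.\ by Theorem \ref{Rep} these are faithfully represented), and it admits the gauge action $\alpha$ of $\mathbb{T}^\omega = \widehat{\mathbb{Z}^\infty}$, which is strongly continuous; so $C^*(\Lambda)$ is a candidate for the co-universal object. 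The co-universal property of $C^*_{\min}(\Lambda)$ then provides a $*$-homomorphism $\sigma \colon C^*_{\min}(\Lambda) \to C^*(\Lambda)$, equivariant for the two gauge actions and sending generators to generators. The compositions $\sigma \circ \rho$ and $\rho \circ \sigma$ fix the respective generating families, hence are the identity, so $\rho$ is an isomorphism.

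The main obstacle is making sure the co-universal machinery of \cite{BrownloweSimsVittadello} actually applies: their Theorem 5.3 requires the coordinate monoid $P$ to sit inside a group in a suitable way and typically imposes a hypothesis guaranteeing enough ``aperiodic'' or boundary-path structure for the co-universal algebra to be nonzero and to detect vertex projections. I would need to confirm that $(\mathbb{N}^{\mathbb{N}}_0 \subseteq \mathbb{Z}^\infty)$ meets these requirements — in particular that $\mathbb{Z}^\infty$ is a countable abelian (hence amenable) group with Pontryagin dual $\mathbb{T}^\omega$, matching the gauge group used throughout, and that the infinite-rank path space $\Lambda^\infty$ used in Theorem \ref{Rep} is precisely the boundary-path space appearing in their construction. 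A secondary technical point is verifying that $\rho$ and $\sigma$ really are mutually inverse on the nose: this is a density argument, since both algebras are the closed span of elements $s_\lambda s_\mu^*$, and the maps agree on all such spanning elements, so it reduces to continuity plus the already-established behavior on generators. Once these compatibility checks are in place, the isomorphism is canonical because both maps are determined by their action on the generating family $\{s_\lambda\}$.
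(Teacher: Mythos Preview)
Your argument has a genuine gap in the construction of $\sigma$. Co-universality in \cite[Theorem 5.3]{BrownloweSimsVittadello} means that $C^*_{\min}(\Lambda)$ is the \emph{smallest} $C^*$-algebra generated by a representation with nonzero vertex projections and a normal coaction: the co-universal property produces surjections \emph{into} $C^*_{\min}(\Lambda)$, not out of it. So from the fact that $C^*(\Lambda)$ carries such a family with a gauge action you only obtain another map $C^*(\Lambda)\to C^*_{\min}(\Lambda)$ in the same direction as $\rho$, and your two-maps-are-mutual-inverses argument collapses. Your opening sentence actually names the correct fix: build only $\rho$ (from the universal property of $C^*(\Lambda)$, using that the generators of $C^*_{\min}(\Lambda)$ form a Cuntz--Krieger $\Lambda$-family), observe that $\rho$ intertwines the gauge actions and sends each $p_v$ to a nonzero projection, and then apply Theorem~\ref{GIUT} to conclude $\rho$ is injective. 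You never carry that out.

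The paper's own proof takes a different, more direct route: rather than constructing any map explicitly, it verifies that $C^*(\Lambda)$ itself satisfies the three hypotheses that characterise $C^*_{\min}(\Lambda)$ in \cite[Theorem 5.3]{BrownloweSimsVittadello}---nonzero vertex projections, a normal coaction of $\mathbb{Z}^{\mathbb{N}}_0$ (obtained from the gauge action by duality, normal because $\mathbb{Z}^{\mathbb{N}}_0$ is amenable), and the finite-exhaustive relation $\prod_{\lambda\in E}(t_v - t_\lambda t_\lambda^*)=0$---and then invokes the uniqueness part of that theorem. You do not mention the finite-exhaustive relation at all; in the paper it is the one nontrivial check, handled by observing that any finite exhaustive $E\subset v\Lambda$ already lies in some $v\,{}^k\Lambda$, where the $k$-graph relations apply.
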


\begin{proof}
By definition, $C^*(\Lambda) $ is generated by a representation $t$ of $\Lambda$ in the sense of \cite[Definition 5.1]{BrownloweSimsVittadello} (though the definitions are slightly different it can be seen in \cite[Appendix B]{RaeburnSims} that they are equivalent).
We know that $t_v \neq 0$ for all $v \in \Lambda^0$ as we have a nonzero representation, and that there is an action $\alpha$ of $\mathbb{T}^\omega$ on $C^*(\Lambda)$ satisfying $\alpha_z (t_\lambda) = z^{d(\lambda)}t_\lambda$ for all $\lambda \in \Lambda.$ Under action-coaction duality, $\alpha$ determines a coaction $\gamma$ of $\mathbb{Z}^{\mathbb{N}}_0$ satisfying $\gamma(t_\lambda )= \ t_\lambda \otimes U_{d(\lambda)}$ for all $\lambda \in \Lambda.$
This coaction is normal because $\mathbb{Z}^{\mathbb{N}}_0$ is abelian and hence amenable. Additionally, the $t_\lambda$ satisfy that \[ \prod _{\lambda \in E} (t_v - t_\lambda t_\lambda^*) = 0 \text{ for all } v \in \Lambda^0 \text{ and finite exhaustive } E \subset v\Lambda\] as $E$ is finite and hence belongs to a subset $v {}^k\Lambda$ for some $k$. Thus by \cite[Theorem 5.3]{BrownloweSimsVittadello} the result follows.
\end{proof}
\section{Uniqueness theorems for $C^*(\Lambda)$}
In this section we give gauge-invariant and Cuntz-Krieger uniqueness theorems for our $\mathbb{N}$-graph $C^*$-algebras. We rely on a combination of using our inductive limit construction and by following in the steps of \cite{HazlewoodRaeburnSimsWebster} where appropriate.

\begin{theorem}\label{GIUT}
Let $B$ be a $C^{*}$-algebra and $\pi: C^{*}(\Lambda) \rightarrow B$ be a homomorphism and let $\beta: \mathbb{T}^{\omega} \rightarrow \text{Aut}(B)$ be an action such that $\pi \circ \alpha_t = \beta_t \circ \pi$ for every $t \in \mathbb{T}^{\omega}.$ Then $\pi$ is faithful if and only if $\pi(p_{v}) \neq 0$ for every $v \in \Lambda^0.$
\end{theorem}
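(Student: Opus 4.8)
The plan is to bootstrap the result from the already-established $k$-graph gauge-invariant uniqueness theorem \cite[Theorem 3.4]{KumjianPask} via the inductive limit decomposition $C^*(\Lambda) = \overline{\bigcup_{k} C^*({}^k\Lambda)}$ from Theorem \ref{limit}. The forward direction is trivial: if $\pi$ is faithful then $\pi(p_v) \ne 0$ because $p_v \ne 0$ in $C^*(\Lambda)$ (each $p_v$ is nonzero by the nontrivial representation of Theorem \ref{Rep}). So the content is the reverse direction: assuming $\pi(p_v) \ne 0$ for all $v$, show $\pi$ is injective.

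First I would fix $k$ and restrict attention to $C^*({}^k\Lambda) \subseteq C^*(\Lambda)$. The gauge action $\alpha$ of $\mathbb{T}^\omega$ restricts, via the embedding $\mathbb{T}^k \hookrightarrow \mathbb{T}^\omega$, $t \mapsto (t,1,1,\dots)$, to the canonical gauge action of $\mathbb{T}^k$ on $C^*({}^k\Lambda)$; call this restricted action $\alpha^{(k)}$. Likewise define $\beta^{(k)}_t := \beta_{(t,1,1,\dots)}$, an action of $\mathbb{T}^k$ on $B$, and note the intertwining relation $\pi \circ \alpha^{(k)}_t = \beta^{(k)}_t \circ \pi$ holds on $C^*({}^k\Lambda)$ as the restriction of the given relation. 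Now $\pi|_{C^*({}^k\Lambda)}$ is a homomorphism from a $k$-graph $C^*$-algebra that is $\mathbb{T}^k$-equivariant for a genuine action on its image, and sends each vertex projection $p_v$ ($v \in {}^k\Lambda^0 = \Lambda^0$) to a nonzero element. Hence by \cite[Theorem 3.4]{KumjianPask}, $\pi|_{C^*({}^k\Lambda)}$ is injective. This holds for every $k$.

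Finally I would patch these together. Since the inclusions $C^*({}^i\Lambda) \subseteq C^*({}^k\Lambda)$ for $i \le k$ are the canonical (injective) connecting maps of the directed system, and $C^*(\Lambda) = \overline{\bigcup_k C^*({}^k\Lambda)}$, a standard fact about injective limits of $C^*$-algebras applies: a homomorphism out of an inductive limit is injective provided its restriction to each building block $C^*({}^k\Lambda)$ is injective (an element of norm $\varepsilon$ in $C^*(\Lambda)$ is within $\varepsilon/2$, say, of an element $a_k \in C^*({}^k\Lambda)$ of norm at least $\varepsilon/2$, and $\|\pi(a_k)\| = \|a_k\|$ keeps $\pi$ from collapsing; more cleanly, $\pi$ restricted to $\bigcup_k C^*({}^k\Lambda)$ is isometric since each piece is, hence $\pi$ is isometric on the closure). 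This gives injectivity of $\pi$ on all of $C^*(\Lambda)$.

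The main obstacle I anticipate is purely bookkeeping rather than conceptual: one must check carefully that the restriction of $\alpha$ to the $\mathbb{T}^k$-subgroup really is the standard $k$-graph gauge action on $C^*({}^k\Lambda)$ — i.e. that $\alpha^{(k)}_t(s_\lambda) = t^{\pi_k(d(\lambda))} s_\lambda$ where $\pi_k$ is the projection $\mathbb{N}^{\mathbb{N}}_0 \to \mathbb{N}^k$ — which is immediate from $d(\lambda)$ having zero entries beyond position $k$ for $\lambda \in {}^k\Lambda$, and that $\beta^{(k)}$ is a well-defined action of $\mathbb{T}^k$ on the image. One should also confirm that $C^*({}^k\Lambda)$, as sitting inside $C^*(\Lambda)$, really is (isomorphic to) the universal $k$-graph algebra so that \cite[Theorem 3.4]{KumjianPask} is applicable verbatim — but this was already arranged by the directed-system construction preceding Theorem \ref{limit}, where the connecting maps were shown injective precisely using the $k$-graph gauge-invariant uniqueness theorem.
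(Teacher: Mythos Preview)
Your proposal is correct and follows essentially the same route as the paper: restrict $\beta$ to $\mathbb{T}^k$ via the embedding $t\mapsto(t,1,1,\dots)$, apply the $k$-graph gauge-invariant uniqueness theorem \cite[Theorem~3.4]{KumjianPask} to conclude that each $\pi|_{C^*({}^k\Lambda)}$ is injective, and then pass to the inductive limit. The paper invokes \cite[Proposition~6.2.4]{Rordam} for the last step where you give the direct isometry argument, but these are the same fact.
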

\begin{proof}
If $\pi(p_{v}) = 0 $ for some $v \in \Lambda^0$ then clearly $\pi$ is not faithful. 
To see the other direction we note that if $\pi$ and $\beta$ are as described above then $\beta|_{\mathbb{T}^{k}}$ is an action of $\mathbb{T}^{k} \mapsto \text{Aut}(B)$ such that $\pi \circ \alpha_{t} = \beta_{t} \circ \pi$ for every $t \in \mathbb{T}^{k}. $ Thus by the gauge invariant uniqueness theorem, \cite[Theorem 3.4]{KumjianPask}, $\pi|_{C^{*}({}^k\Lambda)}$ is faithful for each $k$. Thus we have injective maps $\pi|_{C^{*}({}^k\Lambda)}: C^{*}({}^k\Lambda) \rightarrow B  $ and $\iota_{k}: C^*({}^k\Lambda) \rightarrow C^{*}(\Lambda).$ Thus by \cite[Proposition 6.2.4]{Rordam}, $\pi$ is injective.
\end{proof}
\begin{example}
To finish Example \ref{ex}(ii) we use the gauge invariant uniqueness theorem. Note that with the unitary generators as described we get that there is a C-K $\mathbb{N}^{\mathbb{N}}_0$ family in $C^*(\mathbb{Z}^{\infty})$. Further there is a gauge action which commutes with our homomorphism given by the univeral property. Hence our homomorphism is injective by Theorem \ref{GIUT}. That it is surjective comes from mapping onto all of the generators. Thus we have $C^*( \mathbb{N}^{\mathbb{N}}_0) \cong C^*(\mathbb{Z}^{\infty}) $ as described.
\end{example}
\begin{definition}
We say $\Lambda$ is \emph{aperiodic} if for each vertex $v \in \Lambda^{0}$ and each pair $n \neq m \in \mathbb{N^{N}}_{0}$ there is a path $\lambda \in v \Lambda$ such that $d(\lambda) \geq m \vee n$ and $$\lambda(m, m+d(\lambda)-(m \vee n)) \neq \lambda(n, n+ d(\lambda)-(m \vee n))$$ 
\end{definition}
We dedicate the remainder of this section to proving a Cuntz-Krieger uniqueness theorem for $C^*(\Lambda).$ We will follow largely the steps of \cite{HazlewoodRaeburnSimsWebster}.
\begin{lemma}\label{111}
Let $(\Lambda, d) $ be an aperiodic $\mathbb{N}$-graph with no sources. Suppose that $v \in \Lambda^0$ and $l \in \mathbb{N}^{\mathbb{N}}_{0}$. Then there exists $\lambda \in \Lambda$ such that $r(\lambda)=v, d(\lambda) \geq l$ and $ \alpha, \beta \in \Lambda v , d(\alpha), d(\beta) \leq l $ and $\alpha \neq \beta $ implies $(\alpha \lambda)(0,d(\lambda)) \neq (\beta \lambda)(0,d(\lambda)).$
\end{lemma}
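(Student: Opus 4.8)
The plan is to reduce this statement to the corresponding fact for $k$-graphs, exploiting the fact that every finite collection of paths lives inside some ${}^k\Lambda$. The key observation is that the quantifier over $\alpha, \beta$ in the conclusion is restricted to $\Lambda v$ with $d(\alpha), d(\beta) \le l$; since $\Lambda$ is row-finite, there are only finitely many such paths, and since each has degree bounded by the fixed element $l \in \mathbb{N}^{\mathbb{N}}_{0}$, all of them — together with $v$ itself — lie in ${}^k\Lambda$ for $k$ large enough (take $k$ exceeding the largest index at which $l$ is nonzero). Likewise $l$, regarded via the projection $\pi: \mathbb{N}^{\mathbb{N}}_0 \to \mathbb{N}^k$, becomes an element of $\mathbb{N}^k$.

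First I would fix $v$ and $l$, choose $k$ as above, and pass to the $k$-graph ${}^k\Lambda$. The next step is to invoke the analogous lemma for aperiodic $k$-graphs — this is precisely the content of \cite[Lemma 3.2]{HazlewoodRaeburnSimsWebster} (or the relevant preparatory lemma there) — which yields a path $\lambda \in {}^k\Lambda$ with $r(\lambda) = v$, $d(\lambda) \ge \pi(l)$ in $\mathbb{N}^k$, and such that $\alpha \ne \beta$ with $d(\alpha), d(\beta) \le \pi(l)$ forces $(\alpha\lambda)(0, d(\lambda)) \ne (\beta\lambda)(0, d(\lambda))$. For this I must first check that ${}^k\Lambda$ inherits aperiodicity from $\Lambda$: given $v$ and $n \ne m$ in $\mathbb{N}^k$, view them in $\mathbb{N}^{\mathbb{N}}_0$, apply aperiodicity of $\Lambda$ to get a witnessing path, and observe that this path — having degree comparable to $m \vee n \in \mathbb{N}^k$ — already lies in ${}^k\Lambda$, so it witnesses aperiodicity there. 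Then I would transport $\lambda$ back up to $\Lambda$ via the inclusion ${}^k\Lambda \subset \Lambda$; the degree inequality $d(\lambda) \ge l$ in $\mathbb{N}^{\mathbb{N}}_0$ holds because $d(\lambda)$ and $l$ agree on being zero outside the first $k$ coordinates and $\pi(d(\lambda)) \ge \pi(l)$ there. Finally, the separation condition is unchanged: any $\alpha, \beta \in \Lambda v$ with $d(\alpha), d(\beta) \le l$ automatically lie in ${}^k\Lambda$ by the choice of $k$, so the conclusion for ${}^k\Lambda$ gives exactly the conclusion for $\Lambda$.

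The main obstacle I anticipate is bookkeeping with the degree maps and the identification $d|_{\mathbb{N}^k} = \pi \circ d$: one must be careful that inequalities like $d(\lambda) \ge l$ transfer correctly between $\mathbb{N}^k$ and $\mathbb{N}^{\mathbb{N}}_0$, and that the truncations $(\alpha\lambda)(0, d(\lambda))$ computed in ${}^k\Lambda$ coincide with those computed in $\Lambda$ — this follows from the factorization property being inherited by the subcategory, but deserves a sentence. A secondary point is confirming that the cited $k$-graph lemma is stated in a form matching this one; if not, one can instead run the argument of \cite{HazlewoodRaeburnSimsWebster} directly inside ${}^k\Lambda$, which works verbatim since ${}^k\Lambda$ is a genuine aperiodic row-finite $k$-graph with no sources. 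Everything else is routine.
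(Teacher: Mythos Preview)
The paper takes a different and more direct route: it simply asserts that the combinatorial proof of \cite[Lemma~6.2]{HazlewoodRaeburnSimsWebster} goes through verbatim in the $\mathbb{N}$-graph $\Lambda$, using aperiodicity of $\Lambda$ itself at each step. Your proposal instead tries to reduce to the $k$-graph lemma applied to ${}^k\Lambda$, and this reduction has a genuine gap.

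The problem is your claim that ${}^k\Lambda$ inherits aperiodicity from $\Lambda$. Given $m\neq n$ in $\mathbb{N}^k\subset\mathbb{N}^{\mathbb{N}}_0$, aperiodicity of $\Lambda$ yields some $\mu\in v\Lambda$ with $d(\mu)\ge m\vee n$ and $\mu(m,\,m+d(\mu)-(m\vee n))\neq\mu(n,\,n+d(\mu)-(m\vee n))$. But the definition only demands $d(\mu)\ge m\vee n$; nothing forces $d(\mu)$ to be supported on the first $k$ coordinates, so $\mu$ need not lie in ${}^k\Lambda$. Nor can you truncate $\mu$ to a segment $\mu(0,l')$ with $l'\in\mathbb{N}^k$, since the inequality of the two subpaths can disappear under truncation. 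In fact aperiodicity of a higher-rank graph does \emph{not} in general descend to its lower-rank skeleta (a single loop in colour~1 is periodic, yet the ambient graph can be aperiodic once higher-colour edges are added), so the reduction cannot be repaired in general. A minor side issue: your remark that row-finiteness makes $\{\alpha\in\Lambda v: d(\alpha)\le l\}$ finite is also incorrect --- row-finiteness bounds $v\Lambda^n$, not $\Lambda^n v$ --- though you never actually use this finiteness. The fix is to abandon the reduction and run the inductive argument of \cite[Lemma~6.2]{HazlewoodRaeburnSimsWebster} directly in $\Lambda$: every step there uses only that the degree monoid is a lattice-ordered abelian monoid and that $\Lambda$ is aperiodic, and both hold for $\mathbb{N}^{\mathbb{N}}_0$-graphs without change.
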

Proof is the same as in \cite[Lemma 6.2]{HazlewoodRaeburnSimsWebster}.
\begin{lemma}
Suppose that $\Lambda$ is a row-finite aperiodic $\mathbb{N}$-graph with no sources. Let $\{t_{\lambda} : \lambda \in \Lambda \} $ be a Cuntz-Krieger $\Lambda$-family in a $C^{*}$-algebra $B$ such that $t_v \neq 0$ for all $v \in \Lambda^0.$ Let $F$ be a finite subset of $\Lambda$ and let $a: (\mu, \nu) \mapsto a_{\mu , \nu }$ be a $\mathbb{C}$-valued function on $ F \times F$ such that $s(\mu) = s(\nu)$ whenever $a_{\mu , \nu} \neq 0.$ Then
$$ || \sum _{\mu, \nu \in F} a_{\mu, \nu}t_\mu t_\nu^* || \geq || \sum_{\mu,\nu \in F, d(\mu) = d(\nu)}a_{\mu, \nu}t_\mu t_\nu^* || $$
\end{lemma}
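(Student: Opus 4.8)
The plan is to reduce this to the corresponding statement for $k$-graph $C^*$-algebras, which appears (for $k$-graphs) as the key estimate in the proof of the Cuntz-Krieger uniqueness theorem of \cite{HazlewoodRaeburnSimsWebster}, and which is in turn proved using Lemma \ref{111} above. First I would observe that since $F$ is a finite subset of $\Lambda$, every $\mu \in F$ satisfies $d|_{e^n}(\mu) = 0$ for $n$ larger than some bound; taking the maximum over the finitely many elements of $F$, there is a single $k \in \mathbb{N}$ with $F \subseteq {}^k\Lambda$. Then I would invoke Theorem \ref{c-k family}: the family $\{t_\lambda : \lambda \in {}^k\Lambda\}$ is a Cuntz-Krieger ${}^k\Lambda$-family in $B$, and since $t_v \neq 0$ for all $v \in \Lambda^0 = ({}^k\Lambda)^0$, this family has nonzero vertex projections.

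Next I would check that ${}^k\Lambda$ is aperiodic as a $k$-graph. This needs a short argument: given $v \in ({}^k\Lambda)^0$ and $n \neq m \in \mathbb{N}^k$, embed $n, m$ into $\mathbb{N}^{\mathbb{N}}_0$ via the canonical inclusion (zeros in entries beyond the $k$-th); aperiodicity of $\Lambda$ yields a path $\lambda \in v\Lambda$ witnessing the inequality, and since $d(\lambda) \geq m \vee n$ where $m \vee n$ is supported on the first $k$ coordinates, the relevant initial segments $\lambda(m, \cdot)$ and $\lambda(n, \cdot)$ already lie in ${}^k\Lambda$ — more carefully, one restricts $\lambda$ to a segment of degree exactly $m \vee n$ so that the witnessing path sits inside ${}^k\Lambda$ and serves as an aperiodicity witness there. (Alternatively, one can cite Lemma \ref{111} directly, whose proof is asserted to go through verbatim, to get the combinatorial input needed.)

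With $F \subseteq {}^k\Lambda$, the function $a$ on $F \times F$ is a function on a finite subset of ${}^k\Lambda \times {}^k\Lambda$ with $s(\mu) = s(\nu)$ whenever $a_{\mu,\nu} \neq 0$, and the sums $\sum_{\mu,\nu \in F} a_{\mu,\nu} t_\mu t_\nu^*$ and $\sum_{\mu,\nu \in F,\, d(\mu) = d(\nu)} a_{\mu,\nu} t_\mu t_\nu^*$ are literally the same expressions whether we regard the degrees as living in $\mathbb{N}^{\mathbb{N}}_0$ or in $\mathbb{N}^k$ — the condition $d(\mu) = d(\nu)$ is unaffected by the ambient group. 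So the desired inequality is exactly the corresponding inequality for the Cuntz-Krieger ${}^k\Lambda$-family $\{t_\lambda : \lambda \in {}^k\Lambda\}$, which holds by the $k$-graph version of this lemma, \cite[Lemma 6.3]{HazlewoodRaeburnSimsWebster} (the norm estimate underlying their Cuntz-Krieger uniqueness theorem).

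The main obstacle I anticipate is not the reduction itself — which is essentially bookkeeping — but making sure the aperiodicity of $\Lambda$ transfers cleanly to aperiodicity of ${}^k\Lambda$, since the definition of aperiodicity quantifies over $m, n$ with $d(\lambda) \geq m \vee n$ and one must be careful that the witnessing path can be chosen (or truncated) to lie entirely in ${}^k\Lambda$ rather than merely in $\Lambda$. Once that lemma-level statement is in hand, everything else is a direct appeal to Theorem \ref{c-k family} and the already-established $k$-graph result.
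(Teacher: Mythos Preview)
Your reduction to the $k$-graph statement has a genuine gap at exactly the point you flag: you need ${}^k\Lambda$ to be aperiodic, and your truncation argument does not establish this. If $\lambda \in v\Lambda$ is the aperiodicity witness supplied by $\Lambda$ for $m \neq n \in \mathbb{N}^k$, then $d(\lambda)$ may have nonzero entries in coordinates beyond $k$. Replacing $\lambda$ by $\lambda(0,q)$ for any $q \in \mathbb{N}^k$ with $q \le d(\lambda)$ shortens the two segments being compared from degree $d(\lambda) - (m\vee n)$ down to degree $q - (m\vee n)$, and two distinct paths can certainly share a common initial segment of that shorter degree; so the truncated path need not witness aperiodicity in ${}^k\Lambda$. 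The same objection applies to your ``alternatively, cite Lemma~\ref{111}'' suggestion: that lemma produces a path in $\Lambda$, not in ${}^k\Lambda$, so it does not deliver aperiodicity of ${}^k\Lambda$ either. In fact there is no reason to expect aperiodicity to descend --- the extra coordinates in $\Lambda$ may be precisely what supplies the aperiodicity.

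The paper avoids this entirely by \emph{not} passing to ${}^k\Lambda$. Its proof is literally ``same as \cite[Proposition~6.4]{HazlewoodRaeburnSimsWebster}'': one runs that argument inside the $\mathbb{N}$-graph, using Lemma~\ref{111} (already stated for $\mathbb{N}$-graphs) to produce a compressing path $\lambda$. Crucially, $\lambda$ is allowed to have degree with support outside the first $k$ coordinates; since $\{t_\lambda : \lambda \in \Lambda\}$ is a Cuntz--Krieger family for all of $\Lambda$, the element $t_\lambda t_\lambda^*$ is available in $B$ and the HRSW computation goes through unchanged. Your bookkeeping observation that $F \subseteq {}^k\Lambda$ is correct but ultimately unnecessary: the sums live in $B$ regardless, and the norm estimate is obtained by compressing with projections coming from paths that may well leave ${}^k\Lambda$.
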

Follows the same as \cite[Proposition 6.4]{HazlewoodRaeburnSimsWebster}.
\begin{theorem}
Let $\Lambda$ be a row-finite, aperiodic $\mathbb{N}$-graph with no sources. Suppose that $\{t_\lambda : \lambda \in \Lambda \}$ is a Cuntz-Krieger $\Lambda$-family, and let $\pi$ be the homomorphism of $C^{*}(\Lambda)$ such that $\pi (s_\lambda) = t_\lambda$ for all $\lambda \in \Lambda$. If each $t_v \neq 0$, then $\pi$ is faithful.
\end{theorem}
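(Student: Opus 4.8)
The plan is to reduce the Cuntz–Krieger uniqueness statement for $C^*(\Lambda)$ to the corresponding fact about the $k$-graph subalgebras $C^*({}^k\Lambda)$, exactly as the gauge-invariant uniqueness theorem (Theorem \ref{GIUT}) was reduced via the inductive-limit structure of Theorem \ref{limit}. First I would record that, since $\Lambda = \bigcup_k {}^k\Lambda$ with $C^*(\Lambda) = \overline{\bigcup_k C^*({}^k\Lambda)}$, it suffices to show that $\pi$ restricted to each $C^*({}^k\Lambda)$ is injective; then injectivity of $\pi$ on the dense subalgebra $\bigcup_k C^*({}^k\Lambda)$, together with \cite[Proposition 6.2.4]{Rordam} (a $*$-homomorphism out of an inductive limit that is injective on each building block is injective), gives the result.

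The key technical input is that aperiodicity of $\Lambda$ passes to each ${}^k\Lambda$. This is where some care is needed: the definition of aperiodicity for $\Lambda$ quantifies over \emph{all} pairs $n \neq m \in \mathbb{N}^{\mathbb{N}}_0$, whereas aperiodicity of the $k$-graph ${}^k\Lambda$ only needs the condition for $n \neq m \in \mathbb{N}^k$. Given such a pair in $\mathbb{N}^k$, viewed inside $\mathbb{N}^{\mathbb{N}}_0$, aperiodicity of $\Lambda$ supplies a path $\lambda \in v\Lambda$ with $d(\lambda) \geq m \vee n$ and the required inequality of factorizations; the only subtlety is that $\lambda$ need not lie in ${}^k\Lambda$. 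To fix this one factors $\lambda = \lambda' \lambda''$ where $\lambda'$ carries the degree in the first $k$ coordinates and observes that, since $m$ and $n$ are supported in $\mathbb{N}^k$, the disagreement already occurs in the ${}^k\Lambda$-part $\lambda'$ (or one chooses $\lambda$ so that $d(\lambda)$ agrees with $m\vee n$ outside the first $k$ coordinates — one can always arrange $d(\lambda) = (m \vee n)$ off the first $k$ entries by further factorization). Thus ${}^k\Lambda$ is an aperiodic $k$-graph.

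With that in hand, for each $k$ the family $\{t_\lambda : \lambda \in {}^k\Lambda\}$ is a Cuntz–Krieger ${}^k\Lambda$-family by Theorem \ref{c-k family} (applied to the target algebra), and each $t_v \neq 0$; since ${}^k\Lambda$ is a row-finite aperiodic $k$-graph with no sources, the Cuntz–Krieger uniqueness theorem for $k$-graphs \cite[Theorem 4.5]{KumjianPask} (equivalently \cite[Theorem 6.1]{HazlewoodRaeburnSimsWebster}) implies the induced homomorphism $\pi|_{C^*({}^k\Lambda)} : C^*({}^k\Lambda) \to B$ is faithful. Applying \cite[Proposition 6.2.4]{Rordam} to the compatible family of injective maps $\pi|_{C^*({}^k\Lambda)}$ and the inclusions $\iota_k : C^*({}^k\Lambda) \to C^*(\Lambda)$ then yields that $\pi$ is injective on all of $C^*(\Lambda)$.

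An alternative, more self-contained route would be to mimic \cite{HazlewoodRaeburnSimsWebster} directly: use the faithful conditional expectation $\Phi$ onto the fixed-point algebra from the Corollary above, show $\pi$ is injective on $C^*(\Lambda)^\gamma$ (a commutative–by–AF type argument, or again via the $k$-graph case), and then use the preceding Lemma (the norm inequality $\|\sum a_{\mu\nu} t_\mu t_\nu^*\| \geq \|\sum_{d(\mu)=d(\nu)} a_{\mu\nu} t_\mu t_\nu^*\|$) together with a standard averaging argument to promote faithfulness on the fixed-point algebra to faithfulness on all of $C^*(\Lambda)$. I expect the inductive-limit route to be shorter, and the main obstacle in either approach is the bookkeeping in verifying that aperiodicity descends to ${}^k\Lambda$ — specifically arranging that the witnessing path and its factorizations stay inside ${}^k\Lambda$ when the relevant degrees $m,n$ are supported on the first $k$ coordinates.
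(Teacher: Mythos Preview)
Your primary route has a genuine gap: the truncation argument you sketch for passing aperiodicity from $\Lambda$ to ${}^k\Lambda$ does not work. Given $m, n \in \mathbb{N}^k$ and a witness $\lambda \in v\Lambda$ with $d(\lambda) = p + q$ (where $p$ is supported on the first $k$ coordinates and $q$ on the rest), the paths $\lambda(m, m + d(\lambda) - (m\vee n))$ and $\lambda(n, n + d(\lambda) - (m\vee n))$ have degree $(p - (m\vee n)) + q$, and their disagreement may lie entirely in the $q$-component of the factorization; in that case the truncation $\lambda' = \lambda(0,p) \in {}^k\Lambda$ satisfies $\lambda'(m, m + p - (m\vee n)) = \lambda'(n, n + p - (m\vee n))$ and is no longer a witness. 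Indeed, the same truncation argument, if valid, would show that aperiodicity of any $2$-graph passes to its colour-$1$ skeleton, which is false: the standard one-vertex $2$-graph with a single edge $f$ of degree $e_1$, two edges $g,h$ of degree $e_2$, and relations $fg = hf$, $fh = gf$ is aperiodic, while its $e_1$-skeleton is a single loop. So you cannot simply invoke the $k$-graph Cuntz--Krieger theorem on each ${}^k\Lambda$.

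Your alternative route is exactly what the paper does, and it sidesteps this issue. The paper writes $C^*(\Lambda)^\gamma = \overline{\bigcup_k C^*({}^k\Lambda)^\gamma}$ and shows each $\pi|_{C^*({}^k\Lambda)^\gamma}$ is injective by the AF-core argument (first paragraph of \cite[Theorem~3.4]{KumjianPask}), which needs only $t_v \neq 0$ and \emph{not} aperiodicity of ${}^k\Lambda$; then \cite[Proposition~6.2.4]{Rordam} gives injectivity on $C^*(\Lambda)^\gamma$. Aperiodicity of $\Lambda$ itself enters only once, through the norm-inequality lemma immediately preceding the theorem, to lift faithfulness from the core to all of $C^*(\Lambda)$ as in \cite[Theorem~6.1]{HazlewoodRaeburnSimsWebster}. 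So the ``bookkeeping'' you flagged is not bookkeeping at all --- it is precisely the obstruction, and the fixed-point-algebra route is how the paper avoids it.
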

\begin{proof}
We first want to show that $\pi$ is injective on $C^{*}(\Lambda)^{\gamma}$, the fixed point algebra of the gauge action. Since $C^*(\Lambda)=\overline{span}\{s_\lambda s_\nu ^* : s(\lambda) = s(\nu) \}$ the fixed point algebra will be the span closure of the generators which are fixed by the gauge action. Thus $C^{*}(\Lambda)^{\gamma} = \overline{span}\{s_\mu s_\nu^* : d(\mu)=d(\nu)\}.$ Set $A = \overline{\bigcup_{k=1}^{\infty} C^{*}({}^k\Lambda)^{\gamma}}.$ Then $A$ is the inductive limit of the AF-algebras $ \{C^{*}({}^k\Lambda)^{\gamma}\}$. Since as in proof of \cite[Theorem 6.1]{HazlewoodRaeburnSimsWebster}, $C^{*}({}^k\Lambda)^{\gamma} = \overline{span}\{s_\mu s_\nu^* : d(\mu) = d(\nu), \mu , \nu \in {}^k\Lambda \} $, we get that $A \cong C^{*}(\Lambda)^{\gamma}$, as both contain the span and are closed. Now let $\pi_{k}$ denote the restriction of $\pi$ to $C^{*}({}^k\Lambda)^{\gamma}$ for each $k$. Then as in the first paragraph of \cite[Theorem 3.4]{KumjianPask}, each $\pi_k$ is injective. Let $\iota_k^\gamma$ denote the restriction of $\iota_k$ to $C^{*}({}^k\Lambda)^{\gamma}.$ Then $\iota_{k}^{\gamma}$ is injective. Thus by \cite[Proposition  6.2.4]{Rordam}, $\pi$ is injective on $C^{*}(\Lambda)^{\gamma}.$ 
The rest of the proof follows as in \cite[Theorem 6.1]{HazlewoodRaeburnSimsWebster}. 
\end{proof}
\section{Ideal structure of $C^*(\Lambda)$}
In this section we give a vertex set description for the gauge-invariant ideals of $C^{*}(\Lambda)$. Our work relies largely on the inductive limits of $k$-graph $C^*$-algebras. We again find that these close similarities between the two types of categories yields similar results in the structure of the resulting $C^*$-algebras.
\begin{definition}
Let $(\Lambda, d)$ be a row-finite $\mathbb{N}$-graph with no sources.
We say a subset $H$ of $\Lambda^{0}$ is \emph{hereditary} if $\lambda \in \Lambda$ and $r(\lambda) \in H$ imply $s(\lambda) \in H$. We say that $H$ is \emph{saturated} if for $v \in \Lambda^{0}, \  s(v\Lambda^{n}) \subset H$ for some $n \in \mathbb{N}^{\mathbb{N}}_{0}$ implies $v \in H$.
\end{definition}
\begin{definition}
For a subset $H$ of $\Lambda^{0}$ define $I(H)$ be the closed ideal in $C^{*}(\Lambda)$ generated by $ \{ p_{v} : v \in H \}.$
For $I$ an ideal in $C^{*}(\Lambda)$ define
$$H(I) : = \{ v\in \Lambda ^{0} :p_{v} \in I \}$$
\end{definition}
\begin{theorem}\label{lattice}
Let $(\Lambda,d)$ be a row-finite $\mathbb{N}$-graph with no sources. Then the map $H \mapsto I(H)$ is an isomorphism of the lattice of saturated hereditary subsets of $\Lambda^0$ onto the lattice of closed gauge-invariant ideals of $C^{*}(\Lambda)$. 
\end{theorem}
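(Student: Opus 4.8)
The plan is to transfer the corresponding $k$-graph result, \cite[Theorem 5.2]{KumjianPask} (the lattice isomorphism between saturated hereditary subsets of $({}^k\Lambda)^0 = \Lambda^0$ and gauge-invariant ideals of $C^*({}^k\Lambda)$), along the inductive limit $C^*(\Lambda) = \overline{\bigcup_k C^*({}^k\Lambda)}$ from Theorem \ref{limit}. The key observation is that the vertex set of $\Lambda$ and of every ${}^k\Lambda$ is literally the same set $\Lambda^0$, and that a subset $H \subseteq \Lambda^0$ is saturated hereditary for $\Lambda$ if and only if it is saturated hereditary for ${}^k\Lambda$ for every $k$: hereditariness is a condition on all edges, and every edge lies in some ${}^k\Lambda$, while the saturation condition for $\Lambda$ quantifies over all $n \in \mathbb{N}^{\mathbb{N}}_0$, each of which lies in some $\mathbb{N}^k$. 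So the lattice of saturated hereditary subsets is a single fixed object, independent of whether we regard it inside $\Lambda$ or inside any ${}^k\Lambda$.

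First I would show the map $H \mapsto I(H)$ is well-defined with image a gauge-invariant ideal, and is order-preserving and injective. Injectivity: if $H_1 \neq H_2$, pick $v$ in the symmetric difference, say $v \in H_1 \setminus H_2$; then $p_v \in I(H_1)$, but $p_v \notin I(H_2)$ because $p_v \notin I_k(H_2) := I(H_2) \cap C^*({}^k\Lambda)$ for any $k$ (this is exactly the $k$-graph statement, using $H(I_k(H_2)) = H_2$ from \cite{KumjianPask}), and $p_v \in I(H_2)$ would force $p_v \in I_k(H_2)$ for some $k$ since $p_v \in C^*({}^k\Lambda)$ and the $I_k$ are increasing with dense union in $I(H_2)$; here one also needs that $I(H_2) \cap C^*({}^k\Lambda)$ coincides with the $k$-graph ideal generated by $\{p_v : v \in H_2\}$, which follows because $\iota_k$ is injective, intertwines the gauge actions, and sends the $k$-graph generating set into $I(H_2)$. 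For surjectivity, given a gauge-invariant ideal $I \trianglelefteq C^*(\Lambda)$, set $H = H(I) = \{v : p_v \in I\}$; one checks $H$ is saturated hereditary directly from (CK2)--(CK4), then argues $I = I(H)$. The containment $I(H) \subseteq I$ is clear. For the reverse, $I \cap C^*({}^k\Lambda)$ is a gauge-invariant ideal of $C^*({}^k\Lambda)$ (using the $\mathbb{T}^k$-action from the directed-system construction), hence by \cite{KumjianPask} equals the ideal generated by $\{p_v : v \in H(I \cap C^*({}^k\Lambda))\}$, and $H(I \cap C^*({}^k\Lambda)) = H(I)\cap \Lambda^0 = H$; taking the closed union over $k$ gives $I = \overline{\bigcup_k (I \cap C^*({}^k\Lambda))} \subseteq I(H)$.

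The main obstacle I expect is the bookkeeping around $I \cap C^*({}^k\Lambda)$: verifying that intersecting a gauge-invariant ideal of the limit with the $k$-th building block yields precisely the gauge-invariant ideal of $C^*({}^k\Lambda)$ attached to the vertex set $H$, and that nothing is lost in passing to the closed union. This requires that $I = \overline{\bigcup_k (I \cap C^*({}^k\Lambda))}$ for any (closed) ideal $I$ of an inductive limit — which is a standard fact about inductive limits of $C^*$-algebras with injective connecting maps — together with the compatibility $\iota_k \circ \alpha^{({}^k\Lambda)}_t = \alpha_t \circ \iota_k$ for $t \in \mathbb{T}^k$ embedded into $\mathbb{T}^\omega$, so that gauge-invariance is preserved under restriction. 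Once these are in hand, the lattice isomorphism, including preservation of meets and joins, follows formally: meets correspond to intersections and joins to $\overline{H_1 \cup H_2}^{\,\mathrm{sat}}$ on the vertex side and to intersections and (closed) sums of ideals on the algebra side, and each of these is detected level by level where the $k$-graph result applies.
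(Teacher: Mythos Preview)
Your proposal is correct and takes essentially the same approach as the paper: both restrict a gauge-invariant ideal $I$ to each $C^*({}^k\Lambda)$ via $I = \overline{\bigcup_k (I \cap C^*({}^k\Lambda))}$, invoke the $k$-graph lattice theorem at each level, and use that the vertex sets and the saturated-hereditary conditions coincide across all $k$. The only cosmetic discrepancy is that the paper cites \cite[Theorem 5.2]{RaeburnSimsYeend} rather than \cite{KumjianPask} for the $k$-graph result.
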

\begin{proof}
Let $I$ be a gauge-invariant ideal of $C^{*}(\Lambda).$ We first want to show that $I$ is generated by a saturated and hereditary set of vertex projections. We first note that since $C^{*}(\Lambda) = \overline{\bigcup_{k \in \mathbb{N}} C^{*}({}^k\Lambda)}$ we get $I = \overline{\bigcup_{k \in \mathbb{N}}I \cap C^{*}({}^k\Lambda)}$ by \cite[Exercise 7.3]{Rordam}. As an ideal intersected with a subalgebra is an ideal of the subalgebra, we need only show $ I \cap C^{*}({}^k\Lambda)$ is gauge-invariant. Take $t \in \mathbb{T}^{\omega}$. Then as $I$ is gauge-invariant we get that $\alpha_t(I) \subseteq I$. Now note that $\alpha_t(C^{*}({}^k\Lambda)) = C^{*}({}^k\Lambda).$ Thus $I \cap C^{*}({}^k\Lambda)$ is a gauge-invariant ideal of $C^{*}({}^k\Lambda).$ Thus by \cite[Theorem 5.2]{RaeburnSimsYeend}, $I \cap C^{*}({}^k\Lambda) = I(H_k)$ for some hereditary and saturated set $H_k$ of ${}^k\Lambda^0.$  As this is true for all $k$ we get that $I = \overline{\bigcup_{k \in \mathbb{N}}I(H_k)}.$ Since $d(v) = 0 $ for every $v \in \Lambda^0$, we get that $H_k = H_i$ for every $i,k \in \mathbb{N}.$ Thus there is a saturated and hereditary set $H$ of $\Lambda$ so that $ I(H_k) = \overline{span}\{s_\alpha s_{\beta}^{*} \in C^{*}({}^k\Lambda): s(\alpha) = s(\beta) \in H\}.$ Thus $I =\overline{span}\{s_\alpha s_{\beta}^{*} \in C^{*}(\Lambda): s(\alpha) = s(\beta) \in H\} $.

We next need to show that for $H$ a saturated and hereditary set of vertices in $\Lambda$ that $I(H)$ is gauge-invariant. Note that $H \cap {}^k\Lambda = H$ for every $k \in 
\mathbb{N}.$ We claim $H$ is saturated and hereditary in ${}^k\Lambda. $ Indeed, that it is hereditary is clear. To see that it is saturated we note that  ${}^k\Lambda v \subseteq \Lambda v.$ Denote $I(H)_k$ as the ideal of $C^{*}({}^k\Lambda)$ generated by $\{p_v \in C^{*}({}^k\Lambda) : v \in H\}. $ By \cite[Theorem 5.2]{RaeburnSimsYeend} each $I(H)_k$ is gauge invariant in $C^{*}({}^k\Lambda).$ Define $I_k := I(H) \cap C^{*}({}^k\Lambda).$ By definition $I(H)_k \subseteq I_k$ since $\{p_v : v \in H \} \subset I_k$ for all $k$. We want to show that $I(H)_k = I_k. $ See that $I(H)_k = \overline{span} \{s_\alpha s_\beta^{*} \in C^{*}({}^k\Lambda): s(\alpha) = s(\beta) \in H \}$,  and $ I(H) = \overline{span} \{s_\alpha s_\beta^* \in C^*(\Lambda) : s(\alpha) = s(\beta) \in H \}$ as $C^*(\Lambda) = \overline{span}\{s_\alpha s_\beta^* : s(\alpha) = s(\beta)\}$ and $I(H)$ is generated by $\{p_v : v\in H\}.$ Thus $I_k = I(H)_k.$ So we have that $I(H) = \overline{ \bigcup_{k \in \mathbb{N}} I(H)_k}.$ Since each $I(H)_k$ is gauge-invariant, so is $I(H)$.
\end{proof}
\begin{theorem}\label{quotient}
Let $\Lambda$ be a row-finite $\mathbb{N}$-graph with no sources. Suppose that $H$ is saturated and hereditary. Then $\Lambda \setminus H$, the small category with objects $ \Lambda^{0} \setminus H$, and morphisms $ \{ \lambda \in \Lambda : r(\lambda) \text{ and } s(\lambda) \in \Lambda^{0} \setminus H \}$, with the factorization property $d$ inherited from $(\Lambda, d)$,
    is a row-finite $\mathbb{N}$-graph with no sources, 
    and $C^{*}(\Lambda)/I(H)$ is canonically isomorphic to $C^{*} (\Lambda \setminus H)$.
\end{theorem}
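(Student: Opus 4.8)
The plan is to establish the two assertions separately: first that $\Lambda\setminus H$ is a row-finite $\mathbb{N}$-graph with no sources, so that $C^*(\Lambda\setminus H)$ is defined in the sense of Definition~\ref{CKR}; and then that passing to the quotient by $I(H)$ produces exactly this algebra, with injectivity of the comparison map coming from the gauge-invariant uniqueness theorem, Theorem~\ref{GIUT}.

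For the first assertion, $\Lambda\setminus H$ is a subcategory of $\Lambda$ (it is closed under composition and contains the identity at each of its objects), and it is row-finite because $v(\Lambda\setminus H)^n\subseteq v\Lambda^n$ for each $v\notin H$. The factorization property is inherited: if $\lambda\in\Lambda\setminus H$ and $\lambda=\mu\nu$ is its unique factorization in $\Lambda$ with prescribed degrees, then $r(\mu)=r(\lambda)\notin H$ and $s(\nu)=s(\lambda)\notin H$, while the middle vertex $s(\mu)=r(\nu)$ cannot lie in $H$, since otherwise hereditariness would force $s(\nu)\in H$; thus $\mu,\nu\in\Lambda\setminus H$, and uniqueness carries over from $\Lambda$. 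Finally, $\Lambda\setminus H$ has no sources: if $v\notin H$ and $v(\Lambda\setminus H)^n=\emptyset$ for some $n$, then every $\lambda\in v\Lambda^n$ has $s(\lambda)\in H$, so $s(v\Lambda^n)\subseteq H$ (a nonempty set, since $\Lambda$ has no sources), and saturation forces $v\in H$, a contradiction. These are the points at which the hereditary and saturated hypotheses are used.

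For the isomorphism, I would let $q:C^*(\Lambda)\to C^*(\Lambda)/I(H)$ be the quotient map and put $t_\lambda:=q(s_\lambda)$ for $\lambda\in\Lambda\setminus H$, then check that $\{t_\lambda:\lambda\in\Lambda\setminus H\}$ is a Cuntz-Krieger $(\Lambda\setminus H)$-family. Relations (CK1)--(CK3) survive $q$ because $\Lambda\setminus H$ is a subcategory; for (CK4), given $v\notin H$ and $n$, the terms $s_\lambda s_\lambda^*=s_\lambda p_{s(\lambda)}s_\lambda^*$ in $s_v=\sum_{\lambda\in v\Lambda^n}s_\lambda s_\lambda^*$ with $s(\lambda)\in H$ lie in $I(H)$, so applying $q$ leaves precisely $t_v=\sum_{\lambda\in v(\Lambda\setminus H)^n}t_\lambda t_\lambda^*$. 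I also need $t_v\neq 0$ for all $v\notin H$, i.e. $p_v\notin I(H)$; this follows from the description $I(H)=\overline{span}\{s_\alpha s_\beta^*: s(\alpha)=s(\beta)\in H\}$ established in the proof of Theorem~\ref{lattice} together with \cite[Theorem 5.2]{RaeburnSimsYeend} applied inside each $C^*({}^k\Lambda)$. By the universal property of $C^*(\Lambda\setminus H)$ there is then a homomorphism $\phi:C^*(\Lambda\setminus H)\to C^*(\Lambda)/I(H)$ with $\phi(s_\lambda)=t_\lambda$, and it is surjective because $q(s_\lambda)=0$ whenever $r(\lambda)\in H$ or $s(\lambda)\in H$ (then $s_\lambda\in I(H)$), so the range of $\phi$ already contains every generator of the quotient. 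For injectivity I would use that $I(H)$ is gauge-invariant (Theorem~\ref{lattice}), hence the gauge action of $\mathbb{T}^\omega$ descends to an action $\beta$ on $C^*(\Lambda)/I(H)$; one checks on generators that $\phi$ intertwines the gauge action of $C^*(\Lambda\setminus H)$ with $\beta$, and since $\phi(p_v)=q(p_v)\neq 0$ for every $v\in(\Lambda\setminus H)^0$, Theorem~\ref{GIUT} gives that $\phi$ is faithful. Hence $\phi$ is an isomorphism, and it is canonical because it is determined on generators.

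The step I expect to cost the most is verifying $t_v\neq 0$, equivalently that $I(H)$ contains no vertex projection outside $H$: here one really needs the $k$-graph ideal theory, accessed through the inductive limit $C^*(\Lambda)=\overline{\bigcup_k C^*({}^k\Lambda)}$ of Theorem~\ref{limit} and the fact that $I(H)\cap C^*({}^k\Lambda)$ is the ideal of $C^*({}^k\Lambda)$ attached to $H$. An alternative that avoids writing down the Cuntz-Krieger family explicitly is to argue entirely at the level of inductive limits: $C^*(\Lambda)/I(H)=\overline{\bigcup_k C^*({}^k\Lambda)/(I(H)\cap C^*({}^k\Lambda))}$ by \cite[Exercise 7.3]{Rordam}, each quotient is canonically $C^*({}^k(\Lambda\setminus H))$ by \cite[Theorem 5.2]{RaeburnSimsYeend}, these identifications commute with the connecting inclusions, and the resulting limit is $C^*(\Lambda\setminus H)$ by Theorem~\ref{limit} applied to the $\mathbb{N}$-graph $\Lambda\setminus H$.
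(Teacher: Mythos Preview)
Your argument that $\Lambda\setminus H$ is a row-finite $\mathbb{N}$-graph with no sources is exactly the paper's. For the isomorphism, however, the paper takes the second route you sketch at the end rather than your primary one: it passes directly to the inductive limit, identifying ${}^k(\Lambda\setminus H)={}^k\Lambda\setminus H$, invoking \cite[Theorem~5.2]{RaeburnSimsYeend} to get $C^*({}^k\Lambda\setminus H)\cong C^*({}^k\Lambda)/I(H)_k$ for each $k$, and then assembling these via Theorem~\ref{limit} into $C^*(\Lambda\setminus H)\cong C^*(\Lambda)/I(H)$. Your primary approach---building the Cuntz--Krieger $(\Lambda\setminus H)$-family in the quotient and applying Theorem~\ref{GIUT}---is the standard direct argument from graph-algebra theory and is perfectly correct here; it has the advantage of making the canonical map explicit on generators and of showing exactly where each Cuntz--Krieger relation is used. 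The paper's inductive-limit route is shorter given the machinery already in place and keeps the paper's methodology uniform, but as you yourself note, even your direct approach must dip into the $k$-graph ideal theory (via Theorem~\ref{lattice}) to verify $p_v\notin I(H)$ for $v\notin H$, so the two arguments ultimately rest on the same external input.
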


\begin{proof}
That $\Lambda \setminus H$ is a category is clear by definition. Source maps are clear based on definition and range maps follow by $H$ being hereditary. It remains to show that the factorization property remains. Take $\lambda \in \Lambda \setminus H$ and suppose $d(\lambda) = p + q.$ Then we know there exists $\mu, \nu \in \Lambda$ so that $\lambda = \mu \nu, d(\mu) = p$ and $d(\nu) = q.$ We know that $s(\nu) = s(\lambda) \notin H$ by choice of $\lambda$. As $H $ is hereditary, $r(\nu) = s(\mu) \notin H.$ Thus $\mu, \nu \in \Lambda \setminus H.$ So $\Lambda \setminus H $ is a $\mathbb{N}$-graph. 

It remains to show that it is row-finite and has no sources. That it is row-finite follows from $\Lambda$ being row-finite. To obtain a contradiction suppose that $v \in (\Lambda \setminus H)^0$ is a source. Then there is a $m \in \mathbb{N}^{\mathbb{N}}_{0}$ so that $v(\Lambda \setminus H)^m = \emptyset. $ But we know that $v\Lambda^m \neq \emptyset.$ So it must be that $s(v\Lambda^m) \subseteq H.$ Since $H$ is saturated this gives $v \in H.$ We have a contradiction. So $v$ is not a source. Thus $\Lambda \setminus H$ is a row-finite $\mathbb{N}$-graph with no sources.

So by Theorem \ref{limit} $C^{*}(\Lambda \setminus H) \cong \overline{\bigcup_{k \in \mathbb{N}}C^{*}(^k(\Lambda \setminus H))}.$
It is shown in the proof of Theorem \ref{lattice} if $H$ is saturated and hereditary in $\Lambda$ then it is saturated and hereditary in ${}^k\Lambda$. Therefore ${}^k\Lambda \setminus H ={}^k(\Lambda \setminus H) $. So by \cite[Theorem 5.2]{RaeburnSimsYeend} we know $C^{*}({}^k\Lambda \setminus H) \cong C^{*}({}^k\Lambda) / I(H)_k.$ Thus we get $C^{*}(\Lambda\setminus H) \cong \overline{\bigcup_{k \in \mathbb{N}}C^{*}(^k(\Lambda \setminus H))} \cong \overline{\bigcup_{k \in \mathbb{N}}C^{*}(^k(\Lambda)) / I(H)_k} \cong C^{*}(\Lambda) / I(H).$
\end{proof}

The following lemmas will be used in the next section. These results however are interesting to the ideal structure in their own right. They follow closely of the work of \cite{BrownFullerPittsReznikoff}.
\begin{lemma}
Let $\Lambda $ be a row-finite $\mathbb{N}$-graph with no sources. Let $J$ be an ideal in $C^{*}(\Lambda).$ Then $I(H(J))$ is the largest gauge invariant ideal contained in $J$.
\end{lemma}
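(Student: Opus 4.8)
The plan is to exploit the inductive-limit structure established in Theorem~\ref{limit} together with the known $k$-graph analogue. First I would recall the $k$-graph fact (from \cite{BrownFullerPittsReznikoff}): for an ideal $J_k$ in $C^*({}^k\Lambda)$, the ideal $I(H(J_k))$ generated by the vertex projections lying in $J_k$ is the largest gauge-invariant ideal contained in $J_k$. So the strategy is to cut $J$ down to each $C^*({}^k\Lambda)$, apply this fact level by level, and reassemble.

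The key steps, in order: (1) Show $I(H(J))\subseteq J$ and that $I(H(J))$ is gauge-invariant. Containment is immediate since each generator $p_v$ with $v\in H(J)$ lies in $J$ by definition of $H(J)$; gauge-invariance follows from Theorem~\ref{lattice}, since $H(J)$ is saturated and hereditary (heredity: if $r(\lambda)\in H(J)$ then $p_{s(\lambda)}=s_\lambda^* p_{r(\lambda)} s_\lambda\in J$; saturation: if $s(v\Lambda^n)\subseteq H(J)$ then $p_v=\sum_{\lambda\in v\Lambda^n}s_\lambda p_{s(\lambda)}s_\lambda^*\in J$ by (CK4)). (2) Suppose $K\subseteq J$ is gauge-invariant; we must show $K\subseteq I(H(J))$. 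By Theorem~\ref{lattice}, $K=I(H(K))$ for a saturated hereditary $H(K)\subseteq\Lambda^0$, and clearly $H(K)\subseteq H(J)$ because $p_v\in K\subseteq J$ for $v\in H(K)$. Hence $K=I(H(K))\subseteq I(H(J))$, since $H\mapsto I(H)$ is a lattice map (Theorem~\ref{lattice} again). This actually gives the result directly once gauge-invariance of $I(H(J))$ is in hand, without needing the level-by-level argument at all — the $k$-graph input is absorbed into Theorem~\ref{lattice}.

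The one point requiring care — and the main obstacle — is step~(2)'s appeal to Theorem~\ref{lattice} to write an arbitrary gauge-invariant ideal $K$ as $I(H(K))$: one must be sure this is the full statement of that theorem (the map is onto the lattice of \emph{all} closed gauge-invariant ideals, so every such $K$ is of the form $I(H)$ with $H=H(K)$), and that $I(\cdot)$ is genuinely order-preserving, i.e. $H\subseteq H'$ implies $I(H)\subseteq I(H')$, which holds because $I(H)$ is the closed ideal \emph{generated} by $\{p_v:v\in H\}$. Once these lattice facts are invoked cleanly, the proof is short. If one prefers a self-contained argument avoiding heavy use of Theorem~\ref{lattice}, the fallback is the inductive-limit route: set $J_k:=J\cap C^*({}^k\Lambda)$, note $I(H(J))=\overline{\bigcup_k I(H(J_k))}$ using that $H(J)=\bigcup_k H(J_k)$ and that the $H(J_k)$ stabilize (all vertices have degree $0$, as in the proof of Theorem~\ref{lattice}), apply the $k$-graph result of \cite{BrownFullerPittsReznikoff} to each $J_k$, and pass any gauge-invariant $K\subseteq J$ through $K\cap C^*({}^k\Lambda)\subseteq J_k$; the compatibility of the inclusions (Theorem~\ref{c-k family} and the directed-system maps $\iota_{i,k}$) lets one take the closed union to conclude $K\subseteq I(H(J))$.
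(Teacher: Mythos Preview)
Your proposal is correct, and your primary argument (steps (1) and (2) using Theorem~\ref{lattice}) is essentially the approach the paper intends: the paper simply says ``Proof same as in \cite[Theorem~5.6]{Schenkel},'' and that reference proves the $k$-graph analogue by exactly this route---show $H(J)$ is saturated hereditary so $I(H(J))$ is gauge-invariant and sits inside $J$, then use the lattice isomorphism to write any gauge-invariant $K\subseteq J$ as $I(H(K))\subseteq I(H(J))$. Your inductive-limit fallback is a genuine alternative (and fits the spirit of much of the rest of the paper), but is not needed here and is not what the paper has in mind.
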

Proof same as in \cite[Theorem 5.6]{Schenkel}.
\begin{lemma}
Let $\Lambda $ be a row-finite $\mathbb{N}$-graph with no sources. Let $J$ be an ideal in $C^{*}(\Lambda).$ If $\Lambda \setminus H(J)$ is aperiodic then $J$ is gauge invariant.
\end{lemma}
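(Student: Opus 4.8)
The plan is to reduce the claim to the corresponding fact for $k$-graphs, exactly as the preceding two sections have done. The key structural input is the Cuntz–Krieger uniqueness theorem for $C^*(\Lambda)$ (proved above), together with the lattice isomorphism of Theorem \ref{lattice} and the quotient description of Theorem \ref{quotient}. The strategy: let $q\colon C^*(\Lambda)\to C^*(\Lambda)/J$ be the quotient map. By the previous lemma, $I(H(J))$ is the largest gauge-invariant ideal contained in $J$; so it suffices to show $J\subseteq I(H(J))$, i.e. $J=I(H(J))$, since then $J$ is gauge-invariant by Theorem \ref{lattice}.

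First I would pass to the quotient $C^*(\Lambda)/I(H(J))$. Writing $H=H(J)$, Theorem \ref{quotient} gives a canonical isomorphism $C^*(\Lambda)/I(H)\cong C^*(\Lambda\setminus H)$, and by hypothesis $\Lambda\setminus H$ is aperiodic, row-finite with no sources. Under this isomorphism $J/I(H)$ corresponds to an ideal $\bar J$ of $C^*(\Lambda\setminus H)$. The point is to show $\bar J=0$. Consider the composite homomorphism $\pi\colon C^*(\Lambda\setminus H)\to \big(C^*(\Lambda)/I(H)\big)/\bar J \cong C^*(\Lambda)/J$; this is a homomorphism out of $C^*(\Lambda\setminus H)$ sending each generator $s_\lambda$ to $q(s_\lambda)$. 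One checks that the images $\{q(s_\lambda):\lambda\in\Lambda\setminus H\}$ form a Cuntz–Krieger $(\Lambda\setminus H)$-family: (CK1)–(CK3) are immediate, and (CK4) holds because in the quotient, for $v\notin H$ and $n\in\mathbb N_0^{\mathbb N}$, the relation $p_v=\sum_{\lambda\in v\Lambda^n}p_\lambda$ collapses to the sum over $\lambda\in v(\Lambda\setminus H)^n$ since $p_{s(\lambda)}\in I(H)$ whenever $s(\lambda)\in H$, so $q(p_\lambda)=q(s_\lambda s_\lambda^*)=0$ for those terms (here one needs $s(v\Lambda^n)\not\subseteq H$, which holds since $v\notin H$ and $H$ is saturated).

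Next, I would verify the nonvanishing condition $q(p_v)\neq 0$ for every $v\in(\Lambda\setminus H)^0$. This is precisely the statement that $p_v\notin J$ for $v\notin H$, which is the definition of $H=H(J)$. Then the Cuntz–Krieger uniqueness theorem for the aperiodic $\mathbb N$-graph $\Lambda\setminus H$ applies to $\pi$, showing $\pi$ is faithful; equivalently $\bar J=\ker\pi=0$. Hence $J=I(H(J))$ is gauge-invariant by Theorem \ref{lattice}.

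The main obstacle I anticipate is the bookkeeping around the quotient identifications — making sure that $J/I(H)$ really does correspond to a well-defined ideal $\bar J$ of $C^*(\Lambda\setminus H)$ under the canonical isomorphism of Theorem \ref{quotient}, and that the composite $\pi$ genuinely sends the canonical generators of $C^*(\Lambda\setminus H)$ to $q(s_\lambda)$ (so that the Cuntz–Krieger family one feeds into the uniqueness theorem is the right one). Verifying (CK4) for this family in the quotient — in particular the appeal to saturation of $H$ to ensure the surviving sum is nonempty — is the one spot requiring genuine care rather than routine checking; everything else is a direct transcription of the $k$-graph argument of \cite{BrownFullerPittsReznikoff} via the inductive-limit machinery already established.
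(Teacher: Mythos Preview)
Your proposal is correct and follows essentially the same route as the paper, which simply cites \cite[Theorem 5.7]{Schenkel}: pass to $C^*(\Lambda)/I(H(J))\cong C^*(\Lambda\setminus H(J))$, observe that the image of $J$ contains no vertex projections, and apply the Cuntz--Krieger uniqueness theorem for the aperiodic graph $\Lambda\setminus H(J)$ to conclude $J=I(H(J))$. One small simplification: your explicit re-verification of (CK1)--(CK4) for the family $\{q(s_\lambda)\}$ is unnecessary, since Theorem~\ref{quotient} already supplies the isomorphism $C^*(\Lambda)/I(H)\cong C^*(\Lambda\setminus H)$ as $C^*$-algebras, and your map $\pi$ is then just the composite of that isomorphism with a further quotient; the CK relations are automatic and the ``genuine care'' you flag for (CK4) is already absorbed into the proof of Theorem~\ref{quotient}.
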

Proof same as in \cite[Theorem 5.7]{Schenkel}.
\section{Regular ideals of $C^*(\Lambda)$}
We now switch away from the traditional route of which these constructions take to look at a more specific question. The similarities between the $k$-graph $C^*$-algebras and $\mathbb{N}$-graph papers likely goes farther than we have examined in this paper. Though there are many routes to go at this point, we cover ground which is perhaps a little more familiar to this author. We examine the structure of regular gauge-invariant ideals of $C^{*}(\Lambda)$ It should be no surprise at this point that the proofs and techniques are largely similar to those used in the case of $k$-graph algebras. We follow the work of \cite{Schenkel} in this section.

\begin{definition}
An ideal $J$ in an algebra $A$ is called regular if $J^{\perp \perp} = J$ where $J^{\perp} = \{a \in A : ax = xa = 0$ $\forall$ $x \in J\}$.
\end{definition}
We note that if $J$ is an ideal then $J^{\perp}$ is a regular ideal.

\begin{proposition}
Let $J$ be a gauge invariant ideal in a $\mathbb{N}$-graph $C^{*}$-algebra. Then $J^{\perp}$ is gauge invariant.
\end{proposition}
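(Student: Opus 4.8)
The plan is to reduce the statement to the already-established correspondence between gauge-invariant ideals and saturated hereditary subsets (Theorem~\ref{lattice}), together with the characterization of the largest gauge-invariant ideal inside an arbitrary ideal (the lemma stating that $I(H(J))$ is the largest gauge-invariant ideal contained in $J$). First I would observe that $J^\perp$ is automatically an ideal, and in fact a regular ideal; the only thing in question is gauge-invariance. Since $\alpha_t$ is an automorphism of $C^*(\Lambda)$ for each $t \in \mathbb{T}^\omega$, it carries annihilators to annihilators: for any subset $S$, $\alpha_t(S^\perp) = (\alpha_t(S))^\perp$. Applying this with $S = J$ and using $\alpha_t(J) = J$ (since $J$ is gauge-invariant) gives $\alpha_t(J^\perp) = (\alpha_t(J))^\perp = J^\perp$. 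That is essentially the whole argument, and it does not even require the lattice machinery.

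The one point that needs care is the identity $\alpha_t(S^\perp) = (\alpha_t(S))^\perp$ for an automorphism $\alpha_t$. This is a short computation: $a \in \alpha_t(S^\perp)$ iff $a = \alpha_t(b)$ for some $b$ with $bx = xb = 0$ for all $x \in S$; applying the automorphism $\alpha_t$, and noting every element of $\alpha_t(S)$ has the form $\alpha_t(x)$, this is equivalent to $a y = y a = 0$ for all $y \in \alpha_t(S)$, i.e. $a \in (\alpha_t(S))^\perp$. Since $\alpha_t$ is a $*$-automorphism it respects the two-sided nature of the annihilator and the linear/closure structure, so nothing is lost.

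So the key steps, in order, are: (1) recall that $\alpha_t$ is a (strongly continuous family of) automorphism(s) of $C^*(\Lambda)$ by the definition of the gauge action; (2) prove the elementary lemma $\alpha_t(S^\perp) = (\alpha_t(S))^\perp$ for each $t$; (3) specialize to $S = J$ and invoke $\alpha_t(J) = J$ to conclude $\alpha_t(J^\perp) = J^\perp$ for every $t \in \mathbb{T}^\omega$, which is exactly the gauge-invariance of $J^\perp$. I do not anticipate a genuine obstacle here; the statement is really a general fact about annihilator ideals under a group of automorphisms, and the only mild subtlety is being careful that $J^\perp$ as defined (elements annihilating $J$ on both sides) is genuinely preserved — which follows because $\alpha_t$ is an algebra automorphism, not merely a linear isometry. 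If one wished, one could alternatively route through Theorem~\ref{lattice} by writing $J^\perp = I(H(J^\perp))$ after first checking $J^\perp$ is gauge-invariant, but that is circular for the present purpose; the direct automorphism argument is cleaner.
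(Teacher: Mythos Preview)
Your argument is correct and is essentially the same as the paper's: the paper defers to \cite[Lemma 6.1]{Schenkel}, whose proof is precisely the elementary automorphism computation you give, namely $\alpha_t(J^\perp) = (\alpha_t(J))^\perp = J^\perp$. Your remark that the lattice correspondence of Theorem~\ref{lattice} is unnecessary here is also on point.
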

Proof is the same as in \cite[Lemma 6.1]{Schenkel}.

We will need the following notation:
\begin{definition}
\begin{enumerate}
    \item[(i)] For $w \in \Lambda^{0}$, put $T(w) = \{ s(\lambda) : \lambda \in \Lambda, r(\lambda) = w \}$.
    \item[(ii)] If $ I \subseteq C^{*}(\Lambda)$ an ideal, let  $\overline{H}(I) \subseteq \Lambda^{0}$ be the set   $$\overline{H}(I) = \{ r(\lambda) : \lambda \in \Lambda \text{ and } s(\lambda) \in H(I) \}$$.
\end{enumerate} 
\end{definition}

The following gives a vertex set description for the regular, gauge-invariant ideals. It is in slightly less generality than \cite{Schenkel} as we have yet to do work in the locally-convex setting.
\begin{theorem}
Let $\Lambda$ be a row-finite $\mathbb{N}$-graph with no sources. Let $J \subseteq C^{*}(\Lambda)$ be a gauge-invariant ideal. Then: 
\begin{enumerate}
    \item[(i)] $J^{\perp} = I( \Lambda^0 \setminus \bar{H} (J) )$;
    \item[(ii)] $J^{\perp \perp} = I ( \{ w \in \Lambda ^{0} : T(w) \subseteq \bar{H}(J) \}  );$ and
    \item[(iii)] $J$ is regular if and only if $H(J) = \{ w \in \Lambda ^{0} : T(w) \subseteq \bar{H}(J) \}$.
\end{enumerate}

\end{theorem}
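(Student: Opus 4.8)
The plan is to prove (i) from scratch, deduce (ii) by applying (i) to the (gauge-invariant) ideal $J^\perp$, and then obtain (iii) as a formal consequence of the lattice isomorphism in Theorem \ref{lattice}. Throughout I would use that a gauge-invariant ideal $K$ satisfies $K = I(H(K))$ with $H(K)$ saturated hereditary, that $H \mapsto I(H)$ is order-preserving and injective on saturated hereditary sets, that $J^\perp$ is again gauge-invariant (the Proposition above), and that every $s_\lambda$, hence every $p_\lambda = s_\lambda s_\lambda^*$, is nonzero in $C^*(\Lambda)$ by Theorem \ref{Rep}. I would also use the explicit description $J = \overline{\operatorname{span}}\{s_\alpha s_\beta^* : s(\alpha)=s(\beta)\in H(J)\}$ established in the proof of Theorem \ref{lattice}.

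For (i), first I would check that $\Lambda^0\setminus\bar H(J)$ is saturated and hereditary: hereditariness is immediate since a vertex reaching $H(J)$ through a longer path already reaches it through a shorter one, and saturation uses the no-sources hypothesis to extend a hypothetical path from $v$ to $H(J)$ to degree $\ge n$ and then factor off its first $n$ part, producing an element of $v\Lambda^n$ with source in $\bar H(J)$, a contradiction. Then for the containment $I(\Lambda^0\setminus\bar H(J))\subseteq J^\perp$: if $w\notin\bar H(J)$ and $s(\alpha)=s(\beta)\in H(J)$, then $r(\alpha)\ne w$ and $r(\beta)\ne w$ (otherwise $w\in\bar H(J)$), so $p_w s_\alpha s_\beta^* = 0 = s_\alpha s_\beta^* p_w$ by (CK2), whence $p_w J = J p_w = 0$ and $p_w\in J^\perp$; since $J^\perp$ is an ideal this gives the containment. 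For the reverse containment I would show $H(J^\perp)\subseteq\Lambda^0\setminus\bar H(J)$: if $w\in\bar H(J)$ via a path $\lambda$ with $r(\lambda)=w$ and $s(\lambda)\in H(J)$, then $p_\lambda = s_\lambda p_{s(\lambda)} s_\lambda^*\in J$ (an ideal containing $p_{s(\lambda)}$) while $p_w p_\lambda = p_\lambda\ne 0$, so $p_w\notin J^\perp$. Since $J^\perp = I(H(J^\perp))$ is gauge-invariant, the two containments force $H(J^\perp) = \Lambda^0\setminus\bar H(J)$ and hence (i).

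For (ii), since $J^\perp$ is gauge-invariant I apply (i) with $J^\perp$ in place of $J$: $J^{\perp\perp} = I(\Lambda^0\setminus\bar H(J^\perp))$. Using $H(J^\perp) = \Lambda^0\setminus\bar H(J)$ from (i), one computes $\bar H(J^\perp) = \{r(\lambda): s(\lambda)\notin\bar H(J)\} = \{w : T(w)\not\subseteq\bar H(J)\}$, so $\Lambda^0\setminus\bar H(J^\perp) = \{w : T(w)\subseteq\bar H(J)\}$, giving (ii). For (iii), $J$ is regular iff $J = J^{\perp\perp}$, i.e. $I(H(J)) = I(\{w : T(w)\subseteq\bar H(J)\})$; both vertex sets are saturated and hereditary, so by the injectivity of $H\mapsto I(H)$ from Theorem \ref{lattice} this is equivalent to $H(J) = \{w\in\Lambda^0 : T(w)\subseteq\bar H(J)\}$.

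I expect the only genuine subtlety to be the bookkeeping that the relevant vertex sets ($\Lambda^0\setminus\bar H(J)$ and $\{w : T(w)\subseteq\bar H(J)\}$) are saturated and hereditary, since this is exactly what is needed to invoke Theorem \ref{lattice}; the argument leans on row-finiteness and the no-sources assumption to extend and factor paths. A secondary point, trivial but essential, is that $p_\lambda\ne 0$ for every $\lambda$, which is precisely Theorem \ref{Rep}. Once these are in place, parts (ii) and (iii) are purely formal, and the overall structure mirrors \cite[Section 6]{Schenkel} with $k$-graphs replaced by $\mathbb{N}$-graphs.
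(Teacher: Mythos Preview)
Your proposal is correct and follows essentially the same approach as the paper: the paper simply states that the proof is the same as \cite[Theorem 6.3]{Schenkel}, and what you have written is precisely a spelling-out of that argument, with (i) established directly via the lattice isomorphism and the description $J=\overline{\operatorname{span}}\{s_\alpha s_\beta^*:s(\alpha)=s(\beta)\in H(J)\}$, (ii) obtained by applying (i) to $J^\perp$, and (iii) as a formal consequence. The only minor point is that in your saturation argument for $\Lambda^0\setminus\bar H(J)$ you should note, when extending the path into $H(J)$, that the new source remains in $H(J)$ because $H(J)$ itself is hereditary; otherwise the argument is complete.
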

The proof is the same as that of \cite[Theorem 6.3]{Schenkel}.

As with $k$-graph algebras we note that regular ideals are nice to use when taking quotients. Specifically, they preserve aperiodicity. 
\begin{theorem}\label{regquotient}
 Let $\Lambda$ be a row finite $\mathbb{N}$-graph with no sources which is aperiodic. Let $J$ be a gauge-invariant regular ideal of $C^{*}(\Lambda)$ then $ \Lambda / J $ is aperiodic.
\end{theorem}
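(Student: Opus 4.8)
The plan is to reduce the statement to the corresponding result for $k$-graphs via the inductive limit / quotient machinery already established. First I would recall that $\Lambda/J$ really means $\Lambda \setminus H(J)$ in the notation of Theorem~\ref{quotient}: since $J$ is gauge-invariant, $J = I(H(J))$ by Theorem~\ref{lattice} with $H(J)$ saturated and hereditary, and $C^*(\Lambda)/J \cong C^*(\Lambda \setminus H(J))$. So aperiodicity of $\Lambda/J$ means aperiodicity of the $\mathbb{N}$-graph $\Lambda \setminus H(J)$. The goal is thus purely combinatorial: show that removing a saturated hereditary $H$ arising from a \emph{regular} gauge-invariant ideal leaves an aperiodic $\mathbb{N}$-graph.

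Next I would pass to finite levels. For each $k$, the $k$-graph ${}^k(\Lambda \setminus H) = {}^k\Lambda \setminus H$ (this identification is made in the proof of Theorem~\ref{quotient}), and $H$ restricted to ${}^k\Lambda$ is saturated and hereditary there. The key point is that the regular gauge-invariant ideal $J$ of $C^*(\Lambda)$ should descend to a regular gauge-invariant ideal $I(H)_k$ of each $C^*({}^k\Lambda)$ — or at least that the vertex-set characterization of regularity in part~(iii) of the preceding theorem, $H(J) = \{w : T(w) \subseteq \bar H(J)\}$, restricts sensibly to each ${}^k\Lambda$, since $T(w)$ and $\bar H$ are defined via paths of arbitrary (finite-support) degree but any individual path lies in some ${}^k\Lambda$. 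Here one must be a little careful: $T(w)$ computed in $\Lambda$ and in ${}^k\Lambda$ can differ, so I would instead argue directly that $\Lambda \setminus H$ is aperiodic by invoking the $k$-graph statement (which in the $k$-graph setting is exactly \cite[Theorem~6.4 or its analogue]{Schenkel}, the fact that a regular gauge-invariant ideal has aperiodic quotient) applied to a suitable cofinal family of ${}^k\Lambda$, and then lifting the witnessing paths back up to $\Lambda$.

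Concretely, to verify aperiodicity of $\Lambda \setminus H$ I must, given $v \in (\Lambda\setminus H)^0$ and $m \neq n$ in $\mathbb{N}^{\mathbb{N}}_0$, produce $\lambda \in v(\Lambda\setminus H)$ with $d(\lambda) \geq m \vee n$ and $\lambda(m, m + d(\lambda) - (m\vee n)) \neq \lambda(n, n + d(\lambda) - (m\vee n))$. Choose $k$ large enough that $m, n$ are supported in the first $k$ coordinates; then $v \in {}^k\Lambda^0 \setminus H$, and it suffices to find such a $\lambda$ inside ${}^k\Lambda \setminus H = {}^k(\Lambda\setminus H)$, because such a path is automatically a path in $\Lambda \setminus H$ with the required inequality. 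So the whole theorem collapses to: for each $k$, $^k\Lambda \setminus H$ is aperiodic whenever $^k\Lambda$ is aperiodic and $H$ is the saturated hereditary set of a regular gauge-invariant ideal of $C^*({}^k\Lambda)$. The remaining gap is to confirm that $I(H)_k = I(H)\cap C^*({}^k\Lambda)$ is regular in $C^*({}^k\Lambda)$; I expect this to follow from the vertex-set description, using that $H(J)$ restricted to ${}^k\Lambda^0$ equals $H(I(H)_k)$ (true since $d$ vanishes on vertices, as used repeatedly in Theorem~\ref{lattice}) together with $\bar H$ and the condition $T(w)\subseteq \bar H$ being "stable" under enlarging $k$.

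\textbf{Main obstacle.} The delicate step is the interplay between $T(w)$, $\bar H$, and the level $k$: the regularity condition $H(J) = \{w : T(w)\subseteq \bar H(J)\}$ is phrased with $T(w)$ and $\bar H$ computed using \emph{all} of $\Lambda$, and it is not a priori obvious that the analogous identity holds level by level in each ${}^k\Lambda$ — a vertex $w$ might have $T(w)\subseteq \bar H$ computed in $\Lambda$ but fail it computed in ${}^k\Lambda$, or vice versa. Resolving this — showing the regularity hypothesis is inherited by a cofinal family of the ${}^k\Lambda$, so that \cite[Theorem 6.4]{Schenkel} (the $k$-graph version of this theorem, namely that regular gauge-invariant ideals have aperiodic quotients) can be applied — is where the real work lies; everything else is the now-routine "pass to finite level, apply the $k$-graph result, lift back up" pattern used throughout this paper.
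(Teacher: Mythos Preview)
Your reduction-to-level-$k$ strategy has a gap more serious than the regularity-descent issue you flagged. You write that, given $m\neq n$ supported in the first $k$ coordinates, ``it suffices to find such a $\lambda$ inside ${}^k\Lambda\setminus H$,'' and you then plan to obtain this by invoking the $k$-graph analogue from \cite{Schenkel}, whose hypotheses are that ${}^k\Lambda$ is aperiodic and $I(H)_k$ is regular. But aperiodicity of $\Lambda$ does \emph{not} imply aperiodicity of ${}^k\Lambda$: the witness $\lambda\in v\Lambda$ guaranteed by aperiodicity of $\Lambda$ for a pair $m,n\in\mathbb{N}^k$ may have $d(\lambda)$ with nonzero entries beyond coordinate $k$, so need not live in ${}^k\Lambda$ at all. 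You treat ``${}^k\Lambda$ aperiodic'' as given and focus only on whether regularity descends, but in fact \emph{both} hypotheses at level $k$ can fail. The subproblem you reduce to may therefore be false even when $\Lambda\setminus H$ is aperiodic, so the reduction does not go through.

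The paper's proof bypasses the inductive limit entirely and argues directly in $\Lambda$. The key observation is that $H(J^{\perp})=\Lambda^0\setminus\bar H(J)$ is hereditary, so for any $v\in H(J^{\perp})$ the aperiodicity witness provided by $\Lambda$ at $v$ has all its vertices in $H(J^{\perp})\subseteq\Lambda^0\setminus H(J)$ and hence already lies in $\Lambda\setminus H(J)$. For the remaining vertices $w\in\bar H(J)\setminus H(J)$, regularity in the form of part~(iii) of the preceding theorem gives $T(w)\not\subseteq\bar H(J)$, so there is a path $\gamma$ from $w$ into $H(J^{\perp})$; concatenating $\gamma$ with a witness at $s(\gamma)$ produces the required path at $w$. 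This is exactly the $k$-graph argument of \cite{Schenkel} run verbatim with $\mathbb{N}^{\mathbb{N}}_0$ in place of $\mathbb{N}^k$; no descent to finite levels is needed, and the obstacles you identified simply do not arise.
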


\begin{proof}
Take $v$ to be a vertex in $H(J^{\perp})$. Then there exists $\lambda \in v \Lambda$ such that $d(\lambda) \geq m \vee n$ and $\lambda(m,m+d(\lambda)-(m \vee n)) \neq \lambda (n, n+ d(\lambda) - (m \vee n))$. Since $\lambda$ is a path with $r(\lambda) = v \in H(J^{\perp})$, it follows that $\lambda(i,i) \in H(J^{\perp})$ for all $i \leq d(\lambda) \in \mathbb{N}^{k}$. Thus we have that $q(\lambda)$ satisfies that in $\Lambda/J$ for $q(v)$ that $q(\lambda) \in v(\Lambda/J)$ such that $d(q(\lambda)) \geq m \vee n$ and $$q(\lambda)(m, m+d(q(\lambda))-(m \vee n)) \neq q(\lambda)(n, n+d(q(\lambda))-(m \vee n))$$
Now take $w$ to be a vertex in $ \overline{H}(J)$. Then there is a path $\gamma$ with $r(\gamma) = w$ and $s(\gamma) \in H(J^{\perp})$. We can choose $\gamma$ so that $\gamma(i,i) \neq \gamma(j,j)$ for all $i \neq j$. Thus the path $\gamma \lambda_{s(\gamma)}$ satisfies the aperiodicity condition at $w$.
\end{proof}

The proofs of the following are the same as \cite[Lemma 6.6, Proposition 6.7, Corollary 6.8]{Schenkel} respectively, so we omit them here.

\begin{lemma}
Let $\Lambda$ be a row-finite $\mathbb{N}$-graph with no sources. Let $J$ be a regular ideal of $C^{*}(\Lambda)$. Then $I(H(J)) \subseteq J$ is a regular gauge-invariant ideal.
\end{lemma}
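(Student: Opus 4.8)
The plan is to separate the two assertions hidden in the statement: that $I(H(J))$ is gauge-invariant, and that it is regular. The first is immediate, since $H(J)$ is saturated and hereditary and hence $I(H(J))$ is a gauge-invariant ideal by Theorem \ref{lattice}; equivalently, $I(H(J))$ is the largest gauge-invariant ideal contained in $J$ by the preceding lemma. So all of the content lies in showing $I(H(J))^{\perp\perp} = I(H(J))$.

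One inclusion is free: for any ideal $N$ in a $C^*$-algebra one has $N \subseteq N^{\perp\perp}$, so in particular $I(H(J)) \subseteq I(H(J))^{\perp\perp}$. For the reverse inclusion I would use that the annihilator operation is inclusion-reversing, together with regularity of $J$. From $I(H(J)) \subseteq J$ we get $J^{\perp} \subseteq I(H(J))^{\perp}$, and applying the annihilator once more yields $I(H(J))^{\perp\perp} \subseteq J^{\perp\perp} = J$, where the final equality is exactly the hypothesis that $J$ is regular. Thus $I(H(J))^{\perp\perp}$ is an ideal of $C^*(\Lambda)$ contained in $J$.

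Next I would show that $I(H(J))^{\perp\perp}$ is gauge-invariant. Since $I(H(J))$ is gauge-invariant, its annihilator $I(H(J))^{\perp}$ is gauge-invariant by the Proposition on annihilators of gauge-invariant ideals; applying that same Proposition to the gauge-invariant ideal $I(H(J))^{\perp}$ shows that $I(H(J))^{\perp\perp}$ is gauge-invariant as well. Hence $I(H(J))^{\perp\perp}$ is a gauge-invariant ideal contained in $J$, and since $I(H(J))$ is the \emph{largest} such ideal (the preceding lemma), we obtain $I(H(J))^{\perp\perp} \subseteq I(H(J))$. Combining the two inclusions gives $I(H(J))^{\perp\perp} = I(H(J))$, so $I(H(J))$ is a regular gauge-invariant ideal, as claimed.

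There is no genuinely hard step; the proof is an assembly of results already established. The only point requiring a little care is the double application of the Proposition that $N$ gauge-invariant implies $N^{\perp}$ gauge-invariant — it is precisely this that lets us feed $I(H(J))^{\perp\perp}$ into the maximality statement of the preceding lemma — together with the observation that regularity of $J$ is invoked exactly once, namely to replace $J^{\perp\perp}$ by $J$.
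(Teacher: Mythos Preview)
Your proof is correct and assembles precisely the ingredients the paper has lined up for this purpose: the Proposition that annihilators of gauge-invariant ideals are gauge-invariant, and the preceding Lemma identifying $I(H(J))$ as the largest gauge-invariant ideal inside $J$. The paper itself omits the proof here and defers to \cite[Lemma~6.6]{Schenkel}, whose argument is the same maximality-plus-double-annihilator computation you give, so your approach matches the intended one.
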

\begin{proposition}
If $\Lambda$ is an aperiodic, row-finite $\mathbb{N}$-graph with no sources, and $J$ is a regular ideal in $C^{*}(\Lambda)$, then $J$ is gauge-invariant.
\end{proposition}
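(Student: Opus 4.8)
The plan is to reduce the statement to the Cuntz--Krieger uniqueness theorem for $C^*(\Lambda)$ by showing that every regular ideal $J$ is gauge-invariant, at which point the ideal must coincide with its own canonical gauge-invariant hull and the quotient $\Lambda \setminus H(J)$ remains aperiodic by Theorem \ref{regquotient}. More precisely, I would first invoke Theorem \ref{regquotient} (or rather its proof technique) together with the two lemmas preceding this section: if $J$ is a regular ideal, then $I(H(J)) \subseteq J$ is a regular gauge-invariant ideal, and passing to the quotient $C^*(\Lambda)/I(H(J)) \cong C^*(\Lambda \setminus H(J))$ one has that $\Lambda \setminus H(J)$ is aperiodic whenever $\Lambda$ is. This is the content that lets us apply the uniqueness machinery downstairs.

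The key steps, in order: (1) Set $H = H(J)$ and form the quotient map $q \colon C^*(\Lambda) \to C^*(\Lambda)/I(H) \cong C^*(\Lambda \setminus H)$, using Theorem \ref{quotient}; since $I(H) \subseteq J$, the image $q(J)$ is an ideal of $C^*(\Lambda \setminus H)$. (2) Observe that $H(q(J)) = \emptyset$ in $(\Lambda \setminus H)^0$, because $q(p_v) \neq 0$ for $v \notin H$ and if $q(p_v) \in q(J)$ then $p_v \in J + I(H) = J$, forcing $v \in H(J) = H$, a contradiction. (3) Apply the lemma on largest gauge-invariant subideals: $I(H(q(J)))$ is the largest gauge-invariant ideal inside $q(J)$, and it equals $I(\emptyset) = 0$. (4) Now use that $\Lambda \setminus H$ is aperiodic (from step (1)'s remark and the preservation of aperiodicity) together with the lemma stating that if $(\Lambda \setminus H) \setminus H(q(J))$ is aperiodic then $q(J)$ is gauge-invariant --- here $H(q(J)) = \emptyset$ so this says $\Lambda \setminus H$ aperiodic implies $q(J)$ gauge-invariant. (5) Pull back: $q(J)$ gauge-invariant and $I(H) \subseteq J$ with $I(H)$ gauge-invariant together give that $J = q^{-1}(q(J))$ is gauge-invariant, since the preimage of a gauge-invariant ideal under the equivariant quotient map is gauge-invariant.

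The main obstacle I anticipate is step (4): verifying that $\Lambda \setminus H(J)$ is genuinely aperiodic. Theorem \ref{regquotient} is stated for $J$ gauge-invariant and regular, but here $J$ is only assumed regular; what we actually need is that $H = H(J)$ yields a saturated hereditary set with aperiodic quotient. The lemma preceding this (``$I(H(J)) \subseteq J$ is a regular gauge-invariant ideal'') supplies that $I(H(J))$ is regular and gauge-invariant, and Theorem \ref{regquotient} applies to that gauge-invariant regular ideal, giving aperiodicity of $\Lambda \setminus H(J)$ exactly. So the resolution is to apply Theorem \ref{regquotient} not to $J$ but to $I(H(J))$. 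One should double-check that $H(I(H(J))) = H(J)$ (which follows from saturation and the lattice isomorphism of Theorem \ref{lattice}), so that the quotient graph is the same. With that identification in hand the argument closes, and this bookkeeping --- matching up the vertex sets of $J$, $I(H(J))$, and the quotient --- is the only delicate point; everything else is a routine transport of the $k$-graph arguments of \cite{Schenkel} through the quotient.
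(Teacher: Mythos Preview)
Your proposal is correct and follows the same route as the paper's referenced argument in \cite[Proposition~6.7]{Schenkel}: use the preceding lemma to see that $I(H(J))$ is a regular gauge-invariant ideal, apply Theorem~\ref{regquotient} to it to obtain aperiodicity of $\Lambda \setminus H(J)$, and then invoke the earlier lemma that aperiodicity of $\Lambda \setminus H(J)$ forces $J$ to be gauge-invariant. Your steps (1)--(5) unpack this last implication by working in the quotient rather than citing the lemma directly on $J$, which is slightly more explicit but amounts to the same argument, and you have correctly identified and resolved the only real subtlety (that Theorem~\ref{regquotient} must be applied to $I(H(J))$ rather than $J$, with $H(I(H(J))) = H(J)$).
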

\begin{corollary}\label{61}
 Let $\Lambda$ be an aperiodic, row-finite $\mathbb{N}$-graph with no sources. Let $J$ be a regular ideal in $C^{*}(\Lambda).$ Then $\Lambda \setminus J$ is aperiodic and $C^{*}(\Lambda) / H(J) \cong C^{*}(\Lambda \setminus H(J)).$
\end{corollary}

\section{Infinite rank graph algebras}
As was the case with $k$-graph $C^*$-algebras to Kumjian-Pask algebras, we are also interested in the question of finding an algebraic analogue to our $\mathbb{N}$-graph $C^*$-algebras. We explore in the rest of this paper the algebraic structure we call $\mathbb{N}$-graph algebras. We are particularly interested in proving as much as possible to align with what was shown in the $C^*$-algebra case. There are some interesting similarities and differences when moving into this setting.
We follow largely the work of \cite{Kumjian Pask Algebra} as much of their work carries into this algebraic analogue.
\begin{definition}
Let $\Lambda$ be an $\mathbb{N}$-graph.
Define $G(\Lambda):= \{ \lambda^{*}:\lambda \in \Lambda \}$, and call each $\lambda^{*}$ a \emph{ghost path}. If $v\in \Lambda^{0}$, then we identify $v$ and $v^{*}$. We extend the degree functor $d$ and the range and source maps $r$ and $s$ to $G(\Lambda)$ by $d(\lambda^{*})=-d(\lambda)$, $r(\lambda^{*})=s(\lambda)$ and $s(\lambda^{*})=r(\lambda)$. We extend the factorization property to the ghost paths by setting $(\mu \lambda)^{*}= \lambda^{*} \mu^{*}$. We denote by $\Lambda^{\neq 0}$ the set of paths which are not vertices and by $G(\Lambda^{\neq 0})$ the set of ghost paths that are not vertices.
\end{definition}

\begin{definition}
Let $ \Lambda$ be a row-finite $\mathbb{N}$-graph and let $R$ be a commutative ring with 1. A Kumjian-Pask $\Lambda$-family $(P, S)$ in an $R$-algebra $A$ consists of two functions $P: \Lambda ^{0} \mapsto A$ and $S: \Lambda ^{\neq 0} \cup G(\Lambda ^{\neq 0}) \mapsto A$ such that:
\begin{enumerate}
    \item[(KP1)] $\{P_{v}:v \in \Lambda^{0}\}$ is a family of mutually orthogonal idempotents;
    \item[(KP2)] for all $\lambda, \mu \in \Lambda ^{\neq 0}$ with $r(\mu) =s(\lambda)$, we have $S_{\lambda}S_{\mu} = S_{\lambda \mu} ,\  S_{\mu^{*}}S_{\lambda^{*}}= S_{(\lambda \mu)^{*}},\  P_{r(\lambda)}S_{\lambda}= S_{\lambda}=S_{\lambda}P_{s(\lambda)},  
    \text{ and } P_{s(\lambda)}S_{\lambda ^{*}}=S_{\lambda ^{*}}=S_{\lambda ^{*}}P_{r(\lambda)}$;
    \item[(KP3)]  for all $n\in \mathbb{N}^{\mathbb{N}}_{0} \setminus \{0\}$ and $\lambda , \mu \in \Lambda^{n}$, we have $S_{\lambda ^{*}}S_{\mu} = \delta_{\lambda , \mu } P_{s(\lambda)}$;
    \item[(KP4)] for all $v \in \Lambda^{0} $ and all $n \in \mathbb{N}^{\mathbb{N}}_{0} \setminus \{ 0 \}$, we have $P_{v} = \sum _{ \lambda \in v \Lambda ^{n}}S_{\lambda}S_{\lambda ^{*}}$.
\end{enumerate}

\end{definition}

\begin{theorem}
Let $\Lambda$ be a $\mathbb{N}$-graph. Then for $\lambda , \mu \in \Lambda$ and $q \in \mathbb{N^{N}}_{0}$ with $d(\lambda) , d(\mu) \leq q$ we have that $$s_{\lambda}^{*}s_{\mu} = \sum _{\lambda \alpha = \mu \beta \text{ and } d(\lambda \alpha) = q}s_{\alpha}s_{\beta}^{*} .$$
\end{theorem}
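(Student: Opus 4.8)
The plan is to run the computation behind \cite[Lemma 3.1]{KumjianPask} — the same identity the $C^*$-version invoked — but now with the Kumjian--Pask relations (KP1)--(KP4) in place of the Cuntz--Krieger relations, as in \cite{Kumjian Pask Algebra}; here $s_\lambda^*$ in the statement denotes the element $S_{\lambda^*}$ of a Kumjian--Pask $\Lambda$-family. Everything reduces to two applications of (KP4) and one application of (KP3), with bookkeeping supplied by (KP2) and the ghost-path factorization $(\mu\lambda)^*=\lambda^*\mu^*$.

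First I would expand both factors ``up to level $q$''. Since $d(\lambda)\le q$ and $d(\mu)\le q$, relation (KP4) at $s(\lambda)$ with exponent $q-d(\lambda)$ and at $s(\mu)$ with exponent $q-d(\mu)$ gives $P_{s(\lambda)}=\sum_{\alpha\in s(\lambda)\Lambda^{q-d(\lambda)}}S_\alpha S_{\alpha^*}$ and $P_{s(\mu)}=\sum_{\beta\in s(\mu)\Lambda^{q-d(\mu)}}S_\beta S_{\beta^*}$, both finite sums by row-finiteness. Using (KP2) in the forms $S_{\lambda^*}=P_{s(\lambda)}S_{\lambda^*}$, $S_\mu=S_\mu P_{s(\mu)}$, $S_{\alpha^*}S_{\lambda^*}=S_{(\lambda\alpha)^*}$ and $S_\mu S_\beta=S_{\mu\beta}$, these rewrite as $S_{\lambda^*}=\sum_\alpha S_\alpha S_{(\lambda\alpha)^*}$ and $S_\mu=\sum_\beta S_{\mu\beta}S_{\beta^*}$.

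Multiplying, $S_{\lambda^*}S_\mu=\sum_{\alpha,\beta}S_\alpha\bigl(S_{(\lambda\alpha)^*}S_{\mu\beta}\bigr)S_{\beta^*}$. Now $d(\lambda\alpha)=d(\lambda)+(q-d(\lambda))=q$ and likewise $d(\mu\beta)=q$, so $\lambda\alpha,\mu\beta\in\Lambda^q$; when $q\ne 0$, (KP3) collapses the bracket to $S_{(\lambda\alpha)^*}S_{\mu\beta}=\delta_{\lambda\alpha,\mu\beta}P_{s(\lambda\alpha)}$, and since $s(\lambda\alpha)=s(\alpha)$ on the surviving terms, absorbing $P_{s(\lambda\alpha)}$ into $S_\alpha$ via (KP2) leaves exactly $\sum_{\lambda\alpha=\mu\beta,\ d(\lambda\alpha)=q}S_\alpha S_{\beta^*}$, which is the claim.

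The one place I expect to slow down is the degenerate behaviour, since (KP3) and (KP4) are stated only for degrees in $\mathbb{N}^{\mathbb{N}}_0\setminus\{0\}$: if $q=0$ then $d(\lambda),d(\mu)\le 0$ forces $\lambda,\mu\in\Lambda^0$ and the identity is immediate from (KP1); and if $q=d(\lambda)$ (resp. $q=d(\mu)$) the exponent $q-d(\lambda)$ (resp. $q-d(\mu)$) is $0$, so the corresponding ``(KP4)-expansion'' is just the single trivial term $P_{s(\lambda)}=S_{s(\lambda)}S_{s(\lambda)^*}$ and the partial path $\lambda\alpha$ with $\alpha=s(\lambda)$ is simply $\lambda$. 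With the standing conventions $v=v^*$ and $S_vS_\mu=S_\mu$ when $r(\mu)=v$, these reductions are routine, so I would dispatch $q=0$ first and then run the displayed computation for $q\ne 0$. Since every index set above is finite, the argument requires no completion and the identity holds exactly in the ambient $R$-algebra.
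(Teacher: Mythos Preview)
Your proposal is correct and is precisely the computation behind \cite[Lemma 3.1]{KumjianPask}, which is all the paper invokes; you have simply written it out in the Kumjian--Pask language with due care for the $q=0$ and $q-d(\lambda)=0$ edge cases.
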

Proof is the same as in \cite[Lemma 3.1]{KumjianPask}.
\begin{theorem}
Let $ \Lambda$ be a  row-finite $\mathbb{N}$-graph with no source.
\begin{enumerate}
\item[(i)] There is a unique $R$-algebra $KP_{R}(\Lambda)$, generated by a Kumjian–Pask $\Lambda$-family $(p,s)$, such that if $(Q,T)$ is a Kumjian–Pask $\Lambda$-family in an $R$-algebra $A$, then there exists a unique $R$-algebra homomorphism $\pi_{Q,T} : KP_{R}(\Lambda) \mapsto A $ such that $\pi_{Q,T} \circ p = Q$ and $\pi_{Q,T} \circ s = T$. For every $r\in R \setminus \{ 0 \} $ and $v \in \Lambda^{0}$, we have $rp_{v} \neq 0$. 
\item[(ii)] The subsets $KP_{R}(\Lambda)_{n} := span \{ s_{\alpha}s_{\beta^{*}} : d(\alpha)-d(\beta)= n \}$ form a $\mathbb{Z^N}_0$-grading of $KP_{R}(\Lambda)$.
\end{enumerate}
\end{theorem}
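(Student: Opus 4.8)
The plan is to build $KP_R(\Lambda)$ as a direct limit of the Kumjian--Pask algebras $KP_R({}^k\Lambda)$, mirroring the $C^*$-algebra argument of Theorem~\ref{limit}. For part (i), I would first invoke \cite[Theorem 3.7]{Kumjian Pask Algebra}: for each $k$, since ${}^k\Lambda$ is a row-finite $k$-graph with no sources, there is a Kumjian--Pask algebra $KP_R({}^k\Lambda)$ generated by a universal Kumjian--Pask ${}^k\Lambda$-family $(p^k, s^k)$ with the faithfulness property $rp^k_v \neq 0$. Next I would check that the inclusion ${}^i\Lambda \hookrightarrow {}^k\Lambda$ for $i \le k$ induces an injective $R$-algebra homomorphism $\iota_{k,i}: KP_R({}^i\Lambda) \to KP_R({}^k\Lambda)$: existence comes from the universal property applied to the image family $\{p^k_v, s^k_\lambda : \lambda \in {}^i\Lambda\}$ (which is a Kumjian--Pask ${}^i\Lambda$-family by the same restriction argument as Theorem~\ref{c-k family}), and injectivity from the graded-uniqueness theorem \cite[Theorem 4.1]{Kumjian Pask Algebra}, using that $\iota_{k,i}(p^i_v) = p^k_v \neq 0$ and that the inclusion respects the $\mathbb{Z}^{\mathbb{N}}_0$-gradings (here one must be careful: the grading on $KP_R({}^i\Lambda)$ is by $\mathbb{Z}^i$ and on $KP_R({}^k\Lambda)$ by $\mathbb{Z}^k$, so one uses the embedding $\mathbb{Z}^i \hookrightarrow \mathbb{Z}^k$, and graded-uniqueness applies since the relevant spectral subspaces match up). These maps are compatible, $\iota_{k,j} = \iota_{k,i}\circ \iota_{i,j}$, so $\{KP_R({}^k\Lambda), \iota_{k,i}\}$ is a directed system of $R$-algebras; let $KP_R(\Lambda) := \varinjlim KP_R({}^k\Lambda)$ with canonical maps $\iota_k$, and define $(p,s)$ as the images of the $(p^k, s^k)$.

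I would then verify $(p,s)$ is a Kumjian--Pask $\Lambda$-family: (KP1)--(KP3) are finitary relations living inside some ${}^k\Lambda$, and (KP4) likewise holds since for $n \in \mathbb{N}^{\mathbb{N}}_0 \setminus \{0\}$ the set $v\Lambda^n = v\,{}^k\Lambda^n$ for any $k$ large enough that $n$ is supported on the first $k$ coordinates. The faithfulness $rp_v \neq 0$ for $r \in R\setminus\{0\}$ transfers along the injective $\iota_k$. For the universal property: given a Kumjian--Pask $\Lambda$-family $(Q,T)$ in an $R$-algebra $A$, restriction gives a Kumjian--Pask ${}^k\Lambda$-family $(Q|_{{}^k\Lambda^0}, T|_{{}^k\Lambda})$ for each $k$, hence a unique $\pi_k: KP_R({}^k\Lambda) \to A$ by universality of $KP_R({}^k\Lambda)$; these satisfy $\pi_k \circ \iota_{k,i} = \pi_i$ because both send the generators of $KP_R({}^i\Lambda)$ to the same elements of $A$, so by the universal property of the direct limit there is a unique $R$-algebra homomorphism $\pi_{Q,T}: KP_R(\Lambda) \to A$ with $\pi_{Q,T}\circ \iota_k = \pi_k$, and this is the required map with $\pi_{Q,T}\circ p = Q$, $\pi_{Q,T}\circ s = T$. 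Uniqueness of $\pi_{Q,T}$ follows since $\bigcup_k \iota_k(KP_R({}^k\Lambda))$ contains all generators and hence is all of $KP_R(\Lambda)$, and uniqueness of the pair $(KP_R(\Lambda),(p,s))$ up to canonical isomorphism follows in the standard way from the universal property.

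For part (ii), note each $KP_R({}^k\Lambda)$ carries the $\mathbb{Z}^k$-grading $KP_R({}^k\Lambda)_m = \mathrm{span}\{s_\alpha s_{\beta^*} : d(\alpha) - d(\beta) = m\}$ by \cite[Theorem 3.7(b)]{Kumjian Pask Algebra}, and the inclusions $\iota_{k,i}$ are graded relative to $\mathbb{Z}^i \hookrightarrow \mathbb{Z}^k$. Hence the direct limit inherits a $\bigcup_k \mathbb{Z}^k = \mathbb{Z}^{\mathbb{N}}_0$-grading; explicitly, for $n \in \mathbb{Z}^{\mathbb{N}}_0$ one sets $KP_R(\Lambda)_n = \mathrm{span}\{s_\alpha s_{\beta^*} : d(\alpha) - d(\beta) = n\}$ and checks $KP_R(\Lambda) = \bigoplus_{n} KP_R(\Lambda)_n$ with $KP_R(\Lambda)_m KP_R(\Lambda)_n \subseteq KP_R(\Lambda)_{m+n}$. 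The spanning of $KP_R(\Lambda)$ by the $s_\alpha s_{\beta^*}$ follows from the analogous statement in each ${}^k\Lambda$ (a consequence of the preceding theorem, $s_\lambda^* s_\mu = \sum s_\alpha s_\beta^*$), the grading property $d(\alpha)-d(\beta)$ being additive under the product (again using $s_\lambda^* s_\mu = \sum_{\lambda\alpha = \mu\beta}s_\alpha s_\beta^*$ and degree-additivity of $d$), and the directness of the sum from directness in each ${}^k\Lambda$ together with the fact that the $\iota_k$ are graded embeddings so no cancellation across levels occurs.

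The main obstacle I anticipate is the bookkeeping around the gradings: each ${}^k\Lambda$-algebra is graded by a \emph{different} group $\mathbb{Z}^k$, and one must set up the embeddings $\mathbb{Z}^i \hookrightarrow \mathbb{Z}^k \hookrightarrow \mathbb{Z}^{\mathbb{N}}_0$ carefully and verify that graded-uniqueness \cite[Theorem 4.1]{Kumjian Pask Algebra} genuinely applies to each $\iota_{k,i}$ — i.e. that the image family is not just a Kumjian--Pask family but that the homogeneous decomposition is respected — before concluding injectivity. Everything else is a routine transfer of the direct-limit argument from the $C^*$ setting, with purely algebraic direct limits replacing the completed ones.
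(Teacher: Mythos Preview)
Your proposal is correct but takes a genuinely different route from the paper. The paper constructs $KP_R(\Lambda)$ directly as the quotient of the free $R$-algebra $\mathbb{F}_R(w(X))$ on $X = \Lambda^0 \cup \Lambda^{\neq 0} \cup G(\Lambda^{\neq 0})$ by the ideal encoding (KP1)--(KP4), and then observes that the proof of \cite[Theorem 3.4]{Kumjian Pask Algebra} goes through verbatim, including the construction of a concrete representation to show $rp_v \neq 0$ and the path-algebra argument for the grading. Only \emph{afterwards}, as a separate result, does the paper prove that $KP_R(\Lambda)$ coincides with $\varinjlim KP_R({}^k\Lambda)$ --- and the paper even remarks that this direct-limit description is ``an interesting fact but not necessary for the remainder of the proofs.'' You have essentially reversed the order: you build the direct limit first and deduce universality, faithfulness, and the grading from the $k$-graph theory. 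Your approach buys you $rp_v \neq 0$ and the grading for free from \cite{Kumjian Pask Algebra} without redoing the infinite-path representation or the path-algebra bookkeeping, and it gives the inductive-limit description simultaneously; the paper's approach is more self-contained and does not need the graded-uniqueness theorem for $k$-graphs as an input. The care you flag about matching the $\mathbb{Z}^i$- and $\mathbb{Z}^k$-gradings along $\mathbb{Z}^i \hookrightarrow \mathbb{Z}^k$ is exactly what the paper handles in its later lemma on the directed system, so your concern is well placed but resolvable.
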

The proof follows in the steps of \cite[Theorem 3.4]{Kumjian Pask Algebra}. Consider the free algebra $\mathbb{F}_R (w(X))$ on $ X := \Lambda^0 \cup \Lambda^ {\neq 0} \cup G(\Lambda^{\neq 0}.$ As in \cite{Kumjian Pask Algebra} let $I$ be the ideal generated by the ideal of $ \mathbb{F}_R (w(X))$ generated by the union of the following sets: 
\begin{enumerate}
    \item $\{vw - \delta_{v,w} v : v, w \in \Lambda^0 \}$;
    \item $\{\lambda - \mu \nu, \lambda^* - \nu^* \mu^* : \lambda, \mu \nu \in \Lambda^0 \text{ and } \lambda = \mu \nu \} \cup \{r(\lambda) \lambda - \lambda, \lambda - \lambda s(\lambda), s(\lambda^*) - \lambda^*, \lambda^* - \lambda^* r(\lambda): \lambda \in \Lambda^0\} $;
    \item $ \{\lambda^* \mu - \delta_{\lambda, \mu} s(\lambda) : \lambda , \mu \in \Lambda ^{\neq 0} \}  $;
    \item  $\{ v - \sum_{\lambda \in v\Lambda^n} \lambda \lambda^* : v \in \Lambda^0, n \in \mathbb{N}^{\mathbb{N}}_{0}\setminus \{0\}\}$.
\end{enumerate}

Now define $KP_{R}(\Lambda) := \mathbb{F}_R (w(X)) / I$. 
With this construction the proof follows exactly the same as that of \cite[Theorem 3.4]{Kumjian Pask Algebra}.

We now show that $KP_R(\Lambda)$ is the direct limit of $\{KP_R({}^k\Lambda)\}.$ It is an interesting fact but not necessary for the remainder of the proofs in this section.
\begin{lemma}\label{45}
Suppose $\Lambda$ is a row-finite $\mathbb{N}$-graph with no sources, and that there is an algebra $B$ with a Kumjiam-Pask $\Lambda$ family. Then there is a Kumjian-Pask ${}^k\Lambda$-family in $B$.
\end{lemma}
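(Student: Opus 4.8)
The plan is to mimic the proof of Theorem \ref{c-k family} (the $C^*$-algebra analogue), since a Kumjian--Pask $\Lambda$-family is built from exactly the same categorical data, just with idempotents in place of projections and formal ghost paths in place of adjoints. Suppose $B$ carries a Kumjian--Pask $\Lambda$-family $(Q,T)$. I would define the candidate ${}^k\Lambda$-family $(Q,T)_k$ by restricting: keep $Q_v$ for all $v\in\Lambda^0={}^k\Lambda^0$, and keep $T_\lambda$ and $T_{\lambda^*}$ only for $\lambda\in{}^k\Lambda^{\neq 0}$ and its ghost paths $G({}^k\Lambda^{\neq 0})$. Note that all vertices lie in ${}^k\Lambda$ and that ${}^k\Lambda$ is closed under the factorization maps (as established just after Definition of ${}^k\Lambda$), so this restriction is well defined and ${}^k\Lambda^{\neq 0}\cup G({}^k\Lambda^{\neq 0})\subseteq \Lambda^{\neq 0}\cup G(\Lambda^{\neq 0})$.

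Next I would check the four Kumjian--Pask relations for $(Q,T)_k$. Relations (KP1), (KP2), (KP3) are inherited verbatim: they are universally quantified over vertices and paths, and since ${}^k\Lambda$ is a subcategory closed under $r$, $s$, composition, ghosting, and factorization, every instance of these relations for ${}^k\Lambda$ is already an instance of the corresponding relation for $\Lambda$ that holds in $B$. The only point requiring a word is (KP4): for $v\in\Lambda^0$ and $n\in\mathbb{N}^{\mathbb{N}}_0\setminus\{0\}$ we need $P_v=\sum_{\lambda\in v\,{}^k\Lambda^{n}} T_\lambda T_{\lambda^*}$. This is where one must restrict the allowable degrees: choosing $n\in\mathbb{N}^{k}$ (viewed inside $\mathbb{N}^{\mathbb{N}}_0$ via the inclusion that is zero past coordinate $k$) guarantees $v\Lambda^{n}=v\,{}^k\Lambda^{n}$, because any $\lambda$ with $r(\lambda)=v$ and $d(\lambda)=n$ automatically has vanishing coordinates beyond $k$ and hence lies in ${}^k\Lambda$. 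Thus (KP4) for ${}^k\Lambda$, for those $n$, is exactly (KP4) for $\Lambda$, which holds in $B$. Since ${}^k\Lambda$ is a $k$-graph, verifying (KP4) on the cofinal set of degrees $n\in\mathbb{N}^k$ suffices, exactly as in the $k$-graph setting.

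Therefore $(Q,T)_k$ is a Kumjian--Pask ${}^k\Lambda$-family in $B$, which is the claim. The main obstacle is essentially bookkeeping: one must be careful that the ghost-path structure restricts compatibly --- i.e. that $(\mu\lambda)^*=\lambda^*\mu^*$ stays inside $G({}^k\Lambda)$ when $\mu,\lambda\in{}^k\Lambda$ --- and that the degree range in (KP4) is cut down to $\mathbb{N}^k$ so that the sum over $v\Lambda^n$ really does collapse to a sum over $v\,{}^k\Lambda^n$. Neither is a genuine difficulty; the proof is parallel to Theorem \ref{c-k family}, and I would simply remark that the verification is routine and identical to the $C^*$-algebra case.
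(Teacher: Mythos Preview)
Your proposal is correct and follows essentially the same approach as the paper: restrict the given family to $\{P_v,S_\lambda:v\in\Lambda^0,\lambda\in{}^k\Lambda\}$, observe that (KP1)--(KP3) are inherited because ${}^k\Lambda$ is a subcategory, and note that (KP4) holds because for $n\in\mathbb{N}^k$ one has $v\Lambda^n=v\,{}^k\Lambda^n$. Your write-up is more explicit about the ghost-path compatibility and the degree restriction in (KP4), but the argument is the same as the paper's (and, as you anticipated, parallel to Theorem~\ref{c-k family}).
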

\begin{proof}
Suppose $ (P,S)$ is a Kumjian-Pask $\Lambda$-family. Define $(P,S)_k := \{P_v , S_\lambda : v \in \Lambda^0, \lambda \in {}^k\Lambda\}$.
Then $(P,S)_k$ is a Kumjian-Pask ${}^k\Lambda $-family. 
Indeed, KP1-KP3 hold by ${}^k\Lambda$ being a subcategory. As for KP4, it holds from choice of $n \in \mathbb{N}^k$ giving all of the necessary elements contained in ${}^k\Lambda.$ So it is indeed a C-K ${}^k\Lambda$ family.
\end{proof}
\begin{corollary}\label{2}
 Suppose $i \leq k$ and $\Lambda$ is a row-finite $\mathbb{N}$-graph with no sources, and that there is an algebra $B$ with a Kumjiam-Pask ${}^k\Lambda$ family. Then there is a Kumjian-Pask ${}^i\Lambda$-family in $B$.
\end{corollary}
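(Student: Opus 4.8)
The plan is to mimic exactly the argument of Lemma \ref{45}, since Corollary \ref{2} is just the two-step (``$i \le k$'') version of that result. Given an algebra $B$ carrying a Kumjian-Pask ${}^k\Lambda$-family $(P,S)$, I would restrict it to the paths of degree supported on the first $i$ coordinates and check that the result is a Kumjian-Pask ${}^i\Lambda$-family.

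First I would note that, since $i \le k$, we have the inclusion of categories ${}^i\Lambda \subseteq {}^k\Lambda$ established earlier in the excerpt; in particular $({}^i\Lambda)^0 = ({}^k\Lambda)^0 = \Lambda^0$. So I would define $(P,S)_i := \{P_v , S_\lambda : v \in \Lambda^0,\ \lambda \in {}^i\Lambda\}$, the obvious restriction of the given ${}^k\Lambda$-family. Then (KP1) is immediate since the vertex set is unchanged, and (KP2)--(KP3) hold because ${}^i\Lambda$ is a subcategory of ${}^k\Lambda$: every relation needed among $S_\lambda$, $S_{\lambda^*}$, $P_v$ for $\lambda,\mu \in {}^i\Lambda^{\ne 0}$ is a special case of the corresponding relation in the ${}^k\Lambda$-family. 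For (KP4), the key point is that for $v \in \Lambda^0$ and $n \in \mathbb{N}^i \setminus \{0\} \subseteq \mathbb{N}^k \setminus \{0\}$, the set $v\,{}^i\Lambda^n$ coincides with $v\,{}^k\Lambda^n$ — both equal $v\Lambda^n$ for such $n$, because a path of degree $n$ (supported on the first $i$ coordinates) automatically lies in ${}^i\Lambda$ — so the Cuntz--Krieger sum $P_v = \sum_{\lambda \in v\,{}^i\Lambda^n} S_\lambda S_{\lambda^*}$ holds by restricting (KP4) for the ${}^k\Lambda$-family to such $n$.

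Alternatively, and more economically, I would simply invoke transitivity: by Lemma \ref{45} applied with $\Lambda$ and $k$ replaced by $\Lambda$ and $i$ — using only that ${}^i\Lambda$ sits inside $\Lambda$ — one gets a Kumjian-Pask ${}^i\Lambda$-family from any Kumjian-Pask $\Lambda$-family; but here the input is a ${}^k\Lambda$-family, so one observes that ${}^k\Lambda$ is itself a row-finite $k$-graph with no sources containing ${}^i\Lambda$ as the corresponding truncation, and the proof of Lemma \ref{45} goes through verbatim with $\Lambda$ replaced by ${}^k\Lambda$ and $k$ replaced by $i$.

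I do not expect any genuine obstacle: the only thing to be careful about is the bookkeeping of degree sets, namely checking that $v\,{}^i\Lambda^n = v\,{}^k\Lambda^n$ for $n \in \mathbb{N}^i$, which is where (KP4) could conceivably fail if the truncations were defined differently — but with the definition of ${}^k\Lambda$ as paths whose degree vanishes past coordinate $k$, this identity is immediate. Hence the proof is one sentence: ``Same as Lemma \ref{45}, with $\Lambda$ replaced by ${}^k\Lambda$.''

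\begin{proof}
Same as Lemma \ref{45}, applied with $\Lambda$ replaced by the $k$-graph ${}^k\Lambda$ and $k$ replaced by $i$: given a Kumjian--Pask ${}^k\Lambda$-family $(P,S)$ in $B$, set $(P,S)_i := \{P_v , S_\lambda : v \in \Lambda^0,\ \lambda \in {}^i\Lambda\}$. Since ${}^i\Lambda \subseteq {}^k\Lambda$ is a subcategory with the same object set $\Lambda^0$, conditions KP1--KP3 are inherited. For KP4, note that for $n \in \mathbb{N}^i \setminus \{0\} \subseteq \mathbb{N}^k \setminus \{0\}$ every path of degree $n$ already lies in ${}^i\Lambda$, so $v\,{}^i\Lambda^n = v\,{}^k\Lambda^n$ and $P_v = \sum_{\lambda \in v\,{}^i\Lambda^n} S_\lambda S_{\lambda^*}$ holds. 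Thus $(P,S)_i$ is a Kumjian--Pask ${}^i\Lambda$-family in $B$.
\end{proof}
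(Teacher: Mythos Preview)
Your proposal is correct and matches the paper's approach exactly: the paper simply states ``Proof follows the same as in Lemma \ref{45},'' and your argument is precisely that lemma's proof with $\Lambda$ replaced by ${}^k\Lambda$ and $k$ by $i$, restricting the family to $(P,S)_i$ and noting KP1--KP3 come from the subcategory inclusion while KP4 uses $v\,{}^i\Lambda^n = v\,{}^k\Lambda^n$ for $n \in \mathbb{N}^i$.
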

Proof follows the same as in Lemma \ref{45}.
\begin{lemma}
Let $\Lambda$ be a row-finite $\mathbb{N}$-graph with no sources. The family of $\{KP_R({}^k\Lambda)\}$ form a directed system under canonical inclusion maps.
\end{lemma}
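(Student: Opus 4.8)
The plan is to mimic, in the algebraic setting, the proof that $\{C^*({}^k\Lambda)\}$ forms a directed system, replacing the gauge-invariant uniqueness theorem by the graded-uniqueness theorem for Kumjian--Pask algebras over a $k$-graph. First I would produce the connecting maps: for $i\le k$ the canonical inclusion functor ${}^i\Lambda\hookrightarrow{}^k\Lambda$ lets us apply Corollary \ref{2}, so the universal Kumjian--Pask ${}^k\Lambda$-family $(p,s)$ generating $KP_R({}^k\Lambda)$ restricts to a Kumjian--Pask ${}^i\Lambda$-family sitting inside $KP_R({}^k\Lambda)$. The universal property of $KP_R({}^i\Lambda)$ then yields an $R$-algebra homomorphism $\iota_{k,i}\colon KP_R({}^i\Lambda)\to KP_R({}^k\Lambda)$ fixing all the generators $p_v$, $s_\lambda$, $s_{\lambda^*}$ with $\lambda\in{}^i\Lambda$.

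Next I would show each $\iota_{k,i}$ is injective. Since ${}^k\Lambda$ is row-finite with no sources (because $\Lambda$ is), the construction of $KP_R({}^k\Lambda)$ gives $rp_v\neq 0$ in $KP_R({}^k\Lambda)$ for every $r\in R\setminus\{0\}$ and $v\in\Lambda^0$, so $\iota_{k,i}(rp_v)=rp_v\neq 0$. To invoke the graded-uniqueness theorem I need $\iota_{k,i}$ to be a graded homomorphism: ${}^i\Lambda$ is an $i$-graph, so $KP_R({}^i\Lambda)$ carries a $\mathbb{Z}^i$-grading, and I would coarsen the natural $\mathbb{Z}^k$-grading $KP_R({}^k\Lambda)_n=\mathrm{span}\{s_\alpha s_{\beta^*}:d(\alpha)-d(\beta)=n\}$ along the coordinate projection $\pi\colon\mathbb{Z}^k\to\mathbb{Z}^i$ to obtain a $\mathbb{Z}^i$-grading on $KP_R({}^k\Lambda)$. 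With respect to this grading $\iota_{k,i}$ sends a spanning monomial $s_\alpha s_{\beta^*}$ of $KP_R({}^i\Lambda)_m$ (so $\alpha,\beta\in{}^i\Lambda$) to an element of the same $\mathbb{Z}^i$-degree $m$, hence is graded; the $k$-graph graded-uniqueness theorem of \cite{Kumjian Pask Algebra} then forces $\iota_{k,i}$ to be injective.

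Finally I would record compatibility: for $j\le i\le k$ the homomorphisms $\iota_{k,i}\circ\iota_{i,j}$ and $\iota_{k,j}$ agree on generators, hence on all of $KP_R({}^j\Lambda)$, and $\iota_{k,k}=\mathrm{id}$. Combined with $\Lambda=\bigcup_{k\in\mathbb{N}}{}^k\Lambda$, this exhibits $(\{KP_R({}^k\Lambda)\},\{\iota_{k,i}\})$ as a directed system over the directed set $(\mathbb{N},\le)$, which is the assertion.

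The main obstacle is the grading bookkeeping in the injectivity step: one must verify that coarsening the $\mathbb{Z}^k$-grading along $\pi\colon\mathbb{Z}^k\to\mathbb{Z}^i$ genuinely produces a grading of $KP_R({}^k\Lambda)$ and that $\iota_{k,i}$ respects it, so that the hypotheses of the Kumjian--Pask graded-uniqueness theorem are literally satisfied. Everything else — well-definedness of the connecting maps via Corollary \ref{2} and the universal property, the nonvanishing of $rp_v$, and the cocycle identity $\iota_{k,i}\circ\iota_{i,j}=\iota_{k,j}$ — is a routine transcription of the $C^*$-algebraic argument and of the analogous facts for $KP_R$.
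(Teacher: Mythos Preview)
Your proposal is correct and follows essentially the same approach as the paper: obtain the maps from Corollary~\ref{2} and the universal property, establish injectivity via the graded-uniqueness theorem of \cite{Kumjian Pask Algebra}, and then record the cocycle identity. The only cosmetic difference is that the paper puts the $\mathbb{Z}^i$-grading on the subalgebra of $KP_R({}^k\Lambda)$ generated by the image, whereas you coarsen the ambient $\mathbb{Z}^k$-grading along the projection $\mathbb{Z}^k\to\mathbb{Z}^i$; your version is arguably cleaner and you are also more explicit about the hypothesis $\iota_{k,i}(rp_v)=rp_v\neq 0$, which the paper leaves implicit.
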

\begin{proof}
By Corollary \ref{2} there is a Kumjian-Pask ${}^i\Lambda$-family in $KP_R({}^k\Lambda). $ Thus there is a homomorphism $\iota_{i,k}: KP_R({}^i\Lambda) \rightarrow KP_R({}^k\Lambda)$ so that $\iota_{i,k}(s_\lambda) = s_\lambda. $ Consider the subalgebra of $KP_R({}^k\Lambda)$ generated by the Kumjian-Pask ${}^i\Lambda$-family in $KP_R({}^k\Lambda)$. This subalgebra is $\mathbb{Z}^i$ graded by the degree map. Thus $\iota_{i,k}$ is a $\mathbb{Z}^i$-graded homomorphism, as degree maps are the same for each of $s_\lambda \in {}^i\Lambda$. Thus by the graded uniqueness theorem \cite[Theorem 4.1]{Kumjian Pask Algebra}, $\iota_i,k$ is injective. The rest is now clear.
\end{proof}
\begin{theorem}
 Let $\Lambda$ be a row-finite $\mathbb{N}$-graph with no sources. Then $KP_R(\Lambda)\cong \bigcup_{k \in \mathbb{N}} KP_R({}^k\Lambda)$.
\end{theorem}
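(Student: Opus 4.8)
The plan is to mimic the inductive-limit argument already carried out for the $C^*$-algebra case in Theorem~\ref{limit}, now in the purely algebraic (non-completed) setting. Since the previous lemma shows that $\{KP_R({}^k\Lambda)\}$ forms a directed system under the injective inclusion maps $\iota_{i,k}$, it has a direct limit $\varinjlim KP_R({}^k\Lambda)$, which — because all connecting maps are injective — can be concretely realized as $\bigcup_{k\in\mathbb{N}} KP_R({}^k\Lambda)$ sitting inside any algebra containing all of them compatibly. The goal is then to identify this union with $KP_R(\Lambda)$ via its universal property: I will show that $\bigcup_k KP_R({}^k\Lambda)$ carries a Kumjian--Pask $\Lambda$-family whose vertex idempotents $p_v$ satisfy $rp_v\neq 0$ for all $r\in R\setminus\{0\}$, and that it is universal for such families.

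First I would assemble the $\Lambda$-family: for $\lambda\in\Lambda$, we have $\lambda\in{}^k\Lambda$ for some $k$ (as observed in the excerpt, $\Lambda=\bigcup_k{}^k\Lambda$), so set $s_\lambda$ and $s_{\lambda^*}$ to be the corresponding generators of $KP_R({}^k\Lambda)\subseteq\bigcup_j KP_R({}^j\Lambda)$; this is well-defined because $\iota_{i,k}(s_\lambda)=s_\lambda$ for $i\le k$. Relations KP1--KP3 are checked by working inside a single ${}^k\Lambda$ large enough to contain all the finitely many paths involved in a given relation. For KP4, given $v\in\Lambda^0$ and $n\in\mathbb{N}^{\mathbb{N}}_0\setminus\{0\}$, choose $k$ with $n\in\mathbb{N}^k$; then $v\Lambda^n=v\,{}^k\Lambda^n$ by definition of ${}^k\Lambda$, and $P_v=\sum_{\lambda\in v\,{}^k\Lambda^n}s_\lambda s_{\lambda^*}$ already holds in $KP_R({}^k\Lambda)$. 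The faithfulness condition $rp_v\neq 0$ is inherited directly from the corresponding statement for $KP_R({}^k\Lambda)$ (part (i) of the $k$-graph theorem).

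Next comes the universal property. Suppose $(Q,T)$ is a Kumjian--Pask $\Lambda$-family in an $R$-algebra $A$. By Lemma~\ref{45}, restricting gives a Kumjian--Pask ${}^k\Lambda$-family in $A$ for each $k$, hence by universality of $KP_R({}^k\Lambda)$ there are $R$-algebra homomorphisms $\pi_k:KP_R({}^k\Lambda)\to A$ with $\pi_k\circ p=Q|_{{}^k\Lambda^0}$, $\pi_k\circ s=T|$. These are compatible: $\pi_{k+1}\circ\iota_{k,k+1}=\pi_k$, since both agree on the generators $s_\lambda$, $\lambda\in{}^k\Lambda$. The universal property of the direct limit then yields a unique $R$-algebra homomorphism $\pi:\bigcup_k KP_R({}^k\Lambda)\to A$ restricting to each $\pi_k$, so $\pi\circ p=Q$ and $\pi\circ s=T$. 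Uniqueness of $\pi$ follows because the $s_\lambda,s_{\lambda^*},p_v$ generate $\bigcup_k KP_R({}^k\Lambda)$ as an $R$-algebra. Thus $\bigcup_k KP_R({}^k\Lambda)$ satisfies exactly the defining universal property of $KP_R(\Lambda)$ in part (i) of the theorem, and by uniqueness of objects with that property we conclude $KP_R(\Lambda)\cong\bigcup_k KP_R({}^k\Lambda)$.

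The main obstacle I anticipate is bookkeeping around the realization of the direct limit: one must be careful that the ``union'' $\bigcup_k KP_R({}^k\Lambda)$ is genuinely a directed union of subalgebras (which requires the injectivity of the $\iota_{i,k}$, already established) rather than a formal colimit, and that the $\Lambda$-family defined above does not secretly impose extra relations — i.e. that an element of $KP_R({}^k\Lambda)$ which is nonzero there remains nonzero in the union. This is automatic from injectivity of the connecting maps, but it is the point where the argument genuinely uses the graded uniqueness theorem for $k$-graph algebras (via the previous lemma), and it is worth stating explicitly. Everything else is a routine transcription of the $C^*$-algebraic proof of Theorem~\ref{limit}, with ``closed union'' replaced by ``union'' and ``$C^*$-algebra universal property'' replaced by ``$R$-algebra universal property.''
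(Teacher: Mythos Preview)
Your proposal is correct and follows essentially the same approach as the paper: realize the direct limit as a union via injectivity of the connecting maps, then verify that this union satisfies the universal property of $KP_R(\Lambda)$ by restricting an arbitrary Kumjian--Pask $\Lambda$-family to each ${}^k\Lambda$ and invoking the universal properties of the $KP_R({}^k\Lambda)$. If anything, you are more careful than the paper---you explicitly assemble the Kumjian--Pask $\Lambda$-family in the union and check (KP1)--(KP4) and $rp_v\neq 0$, steps the paper leaves implicit.
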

\begin{proof}
We know that $lim_{\rightarrow} KP_R({}^k\Lambda) = \bigsqcup_k KP_R({}^k\Lambda) \setminus \sim $. From our inclusion maps being injective and the equivalence relations this gives us that \\$lim_{\rightarrow} KP_R({}^k\Lambda) = \bigcup_k KP_R({}^k\Lambda).$ Thus we need to show that this is the universal algebra. Take $\{t_\lambda: \lambda \in \Lambda \}$ a Kmjian-Pask $\Lambda$-family. Then $\{t_\lambda : \lambda \in {}^k\Lambda$ is a Kumjian-Pask ${}^k\Lambda$-family. So by the universal property of $KP_R({}^k\Lambda)$ there is a homomorphism $\pi_k: KP_R(\Lambda) \rightarrow \{t_\lambda : \lambda \in {}^k\Lambda\}.$ This is true for each $k$. These mappings will compose with the inclusion map. As each $ \lambda \in \Lambda$ belongs to ${}^k\Lambda$ for some $k$, these mappings create a homomorphism from $\bigcup_k KP_R({}^k\Lambda) \rightarrow \{t_\lambda : \lambda \in \Lambda\}$. Thus it is the universal algebra and $\bigcup_k KP_R({}^k\Lambda) \cong KP_R(\Lambda).$
\end{proof}
\section{Uniqueness theorems for $KP_R(\Lambda)$}
In this section we give graded and Cuntz-Krieger uniqueness theorems. The proofs of which are the same as those in \cite{Kumjian Pask Algebra}. This is an interesting difference in switching to the algebraic setting, as the ones for the $C^*$-algebra case relied heavily on the use of the inductive limit.

We use the following lemmas inherently in our uniqueness proofs as they are necessary in the work of \cite{Kumjian Pask Algebra}.

\begin{lemma}
Every nonzero $x \in KP_R(\Lambda)$ can be written as a sum \\$\sum_{(\alpha, \beta) \in F} r_{\alpha, \beta}s_{\alpha} s_{\beta}^{*}$ where $F$ is a finite subset of $\Lambda \times \Lambda, r_{\alpha, \beta} \in R \setminus \{0\}$ for all $(\alpha, \beta) \in F$, and all the $\beta$ have the same degree. In this case we say $x$ is written in normal form. 
\end{lemma}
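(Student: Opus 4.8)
The plan is to mimic the normal-form argument from \cite[Lemma 4.2]{Kumjian Pask Algebra} and adapt it to the $\mathbb{N}$-graph setting. First I would recall that $KP_R(\Lambda)$ is spanned by the monomials $s_\mu s_\nu^*$ with $s(\mu) = s(\nu)$ (this follows from the relations KP1--KP4 together with the earlier theorem that $s_\lambda^* s_\mu = \sum_{\lambda\alpha = \mu\beta,\ d(\lambda\alpha)=q} s_\alpha s_\beta^*$, which lets one collapse any alternating word $s_{\mu_1}s_{\nu_1}^* s_{\mu_2}s_{\nu_2}^*\cdots$ into a single such monomial). So any nonzero $x$ is already a finite $R$-linear combination $\sum_{(\mu,\nu)\in F_0} r_{\mu,\nu}\, s_\mu s_\nu^*$; after discarding the terms with $r_{\mu,\nu}=0$ we may assume all coefficients are in $R\setminus\{0\}$.

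The remaining work is to arrange that \emph{all the $\nu$ appearing have the same degree}. Here I would pick $q\in\mathbb{N}^{\mathbb{N}}_0$ with $q \ge d(\nu)$ for every $(\mu,\nu)\in F_0$ — this is possible because $F_0$ is finite, so only finitely many coordinates among all the $d(\nu)$ are nonzero, and one takes the coordinatewise maximum. Then for a fixed term $s_\mu s_\nu^*$, I would insert the factorization of $s_{s(\nu)}$ coming from (KP4): since $s(\mu)=s(\nu)$,
\[
s_\mu s_\nu^* = s_\mu s_{s(\nu)} s_\nu^* = s_\mu \Big(\sum_{\gamma \in s(\nu)\Lambda^{\,q - d(\nu)}} s_\gamma s_\gamma^*\Big) s_\nu^* = \sum_{\gamma \in s(\nu)\Lambda^{\,q-d(\nu)}} s_{\mu\gamma}\, s_{\nu\gamma}^* .
\]
Row-finiteness guarantees each such sum is finite, and $d(\nu\gamma) = d(\nu) + (q - d(\nu)) = q$ for every $\gamma$, so after performing this substitution on every term of $x$ and collecting like monomials, every ghost path has degree exactly $q$. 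Finally I would again delete any monomials whose collected coefficient is $0$; what remains is a sum $\sum_{(\alpha,\beta)\in F} r_{\alpha,\beta} s_\alpha s_\beta^*$ over a finite index set $F$ with all $r_{\alpha,\beta}\in R\setminus\{0\}$ and all $\beta$ of degree $q$. One must check this is not the empty sum: if it were, $x$ would equal $0$, contradicting $x\neq 0$ — here I would invoke the $\mathbb{Z}^{\mathbb{N}}_0$-grading on $KP_R(\Lambda)$ and the faithfulness statement $rp_v\neq 0$ from the preceding theorem, exactly as in \cite{Kumjian Pask Algebra}, to rule out unexpected cancellation.

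The main obstacle is the last point: showing that the normal form genuinely represents the nonzero element, i.e.\ that collecting terms of common degree does not secretly annihilate everything. This is where one needs the structural input — the grading together with the uniqueness/faithfulness part of the construction theorem — rather than just formal manipulation of the relations; the rest is bookkeeping with (KP4) and row-finiteness. Since all of these ingredients are already available in the $\mathbb{N}$-graph setting and are used verbatim in \cite[Lemma 4.2]{Kumjian Pask Algebra}, the proof transfers with no essential change.
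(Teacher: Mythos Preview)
Your proposal is correct and follows exactly the approach the paper has in mind, namely the argument of \cite[Lemma~4.2]{Kumjian Pask Algebra}: write $x$ in the spanning set, choose $q$ dominating all the $d(\nu)$, and use (KP4) with row-finiteness to push every ghost path to degree $q$. Your final worry is unnecessary, though: the (KP4) substitutions are equalities in $KP_R(\Lambda)$, so the collected sum \emph{is} $x$, and if it were empty we would have $x=0$ outright---no appeal to the grading or to $rp_v\neq 0$ is needed here.
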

Proof is the same as \cite[Lemma 4.2]{Kumjian Pask Algebra}

\begin{lemma}
Suppose $x$ is a nonzero element of $KP_R(\Lambda)$ and \\$x = \sum_{(\alpha, \beta) \in F} r_{\alpha, \beta} s_{\alpha}s_{\beta}^{*}$ is in normal form. Then there exists $\gamma \in F_2 := \{ \beta : (\alpha, \beta) \in F \text{ for some } \alpha \in \Lambda\}$ such that $0 \neq x s_{\gamma} = \sum_{\alpha \in G} r_{\alpha, \gamma} s_{\alpha}$ where $G := \{ \alpha : (\alpha, \gamma) \in F \},$ Further if $\delta \in G$ then $0 \neq s_\delta^* x s_{\gamma} = r_{\delta, \gamma} p_{s(\delta)} + \sum_{\alpha \in G} r_{\alpha, \gamma} s_{\delta}^* s_{\alpha}$ and $r_{\delta, \gamma}p_{s(\delta)}$ is the $0$-graded component of $s_{\delta}^*xs_\gamma .$
\end{lemma}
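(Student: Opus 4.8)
The plan is to mimic \cite[Lemma 4.3]{Kumjian Pask Algebra}, using the normal-form structure from the previous lemma together with the $\mathbb{Z}^{\mathbb{N}}_0$-grading of $KP_R(\Lambda)$ established earlier. First I would recall that $x$ is written in normal form $\sum_{(\alpha,\beta)\in F} r_{\alpha,\beta} s_\alpha s_\beta^*$ with all $\beta$ sharing a common degree, say $d(\beta) = m$ for every $\beta \in F_2$. The key observation is that multiplying on the right by $s_\gamma$ for a fixed $\gamma \in F_2$ collapses the sum: by (KP3), for $\beta,\gamma \in \Lambda^m$ we have $s_\beta^* s_\gamma = \delta_{\beta,\gamma} p_{s(\gamma)}$, so $x s_\gamma = \sum_{\alpha \in G} r_{\alpha,\gamma} s_\alpha$ where $G = \{\alpha : (\alpha,\gamma)\in F\}$ (using $s_\alpha s_{s(\gamma)} = s_\alpha$ since $s(\alpha) = s(\gamma)$ whenever $(\alpha,\gamma)\in F$, which holds because $s_\alpha s_\beta^*$ is only nonzero when $s(\alpha) = s(\beta)$).

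Second, I need to produce a $\gamma$ for which $x s_\gamma \neq 0$. Since $x \neq 0$, not all of the $r_{\alpha,\beta}$ are zero, and normal form guarantees the $s_\alpha s_\beta^*$ appearing are ``distinct enough'' that no cancellation has already occurred; concretely, one picks $\gamma$ to be a $\beta$-index of maximal degree $d(\alpha) - d(\beta)$ among the pairs in $F$ — or more carefully, one invokes the grading: the homogeneous components of $x$ are nonzero, and choosing $\gamma$ appropriately ensures $x s_\gamma$ retains a nonzero homogeneous piece. This is the point where I would cite the argument of \cite[Lemma 4.3]{Kumjian Pask Algebra} essentially verbatim, since the only structural inputs — normal form, (KP3), and the existence of the $\mathbb{Z}^{\mathbb{N}}_0$-grading — are all available in our setting with $\mathbb{N}^k$ replaced by $\mathbb{N}^{\mathbb{N}}_0$ and the arguments are insensitive to the rank.

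Third, for the final assertion, fix $\delta \in G$ and compute $s_\delta^* x s_\gamma = s_\delta^*\left(\sum_{\alpha \in G} r_{\alpha,\gamma} s_\alpha\right) = \sum_{\alpha\in G} r_{\alpha,\gamma} s_\delta^* s_\alpha$. Splitting off the $\alpha = \delta$ term gives $s_\delta^* s_\delta = p_{s(\delta)}$ by (CK3)/(KP2)–(KP3), so $s_\delta^* x s_\gamma = r_{\delta,\gamma} p_{s(\delta)} + \sum_{\alpha \in G,\, \alpha\neq\delta} r_{\alpha,\gamma} s_\delta^* s_\alpha$. To see that $r_{\delta,\gamma} p_{s(\delta)}$ is precisely the $0$-graded component, note that $p_{s(\delta)}$ has degree $0$, while for $\alpha \neq \delta$ each term $s_\delta^* s_\alpha$ — after applying the formula $s_\delta^* s_\alpha = \sum s_{\alpha'} s_{\delta'}^*$ from the theorem preceding part (i) — contributes only homogeneous pieces of degree $d(\alpha') - d(\delta')$ with $d(\alpha') - d(\delta') = d(\alpha) - d(\delta) \neq 0$, because $\alpha \neq \delta$ forces these not all to be vertices in the normal-form bookkeeping; hence the only degree-$0$ contribution is the $\alpha=\delta$ term. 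Finally $s_\delta^* x s_\gamma \neq 0$ because its $0$-graded component $r_{\delta,\gamma} p_{s(\delta)}$ is nonzero, using $r_{\delta,\gamma} \neq 0$ (normal form) and $r p_v \neq 0$ for $r \in R\setminus\{0\}$ from part (i) of the structure theorem.

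\textbf{Main obstacle.} The delicate point is the claim that for $\alpha \neq \delta$ the product $s_\delta^* s_\alpha$ has no degree-$0$ component — i.e. that the $r_{\delta,\gamma}p_{s(\delta)}$ term is not secretly cancelled or augmented by the other summands. This requires the normal-form hypothesis to be used carefully (distinctness of the pairs $(\alpha,\gamma)$, so that $\alpha \neq \delta$ genuinely means the paths differ), together with the observation that $s_\delta^* s_\alpha$ expands via the earlier commutation formula into a sum of terms $s_{\alpha'}s_{\delta'}^*$ whose degrees $d(\alpha')-d(\delta')$ are all equal to $d(\alpha)-d(\delta)$, which is nonzero since $\alpha,\delta$ are distinct paths with the same range $r(\alpha)=r(\gamma)=r(\delta)$... wait, one must be slightly more careful here: distinct paths of equal degree would give degree-$0$ terms, so the real content is that normal form arranges the $\alpha$'s to have distinct \emph{degrees} or that the expansion never produces a vertex — this is exactly the subtlety handled in \cite[Lemma 4.3]{Kumjian Pask Algebra}, and I would follow that argument. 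Everything else is a routine transcription, with $\mathbb{N}^{\mathbb{N}}_0$ in place of $\mathbb{N}^k$.
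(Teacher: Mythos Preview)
Your approach matches the paper's exactly: the paper simply writes ``Proof is the same as that of \cite[Lemma~4.3]{Kumjian Pask Algebra},'' and you correctly identify that the only inputs (normal form, (KP3), the $\mathbb{Z}^{\mathbb{N}}_0$-grading) all carry over verbatim from the finite-rank case.

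Two quick clarifications on the points you flag as uncertain, so you needn't leave them to the citation. For the existence of $\gamma$ with $xs_\gamma\neq 0$, the argument is not via ``maximal degree'' but simply that $x=\sum_{\gamma\in F_2}(xs_\gamma)s_\gamma^*$ (immediate from (KP3) since all $\beta\in F_2$ share the same degree), so if every $xs_\gamma$ vanished then $x$ would too. For your ``main obstacle,'' the resolution is cleaner than you suggest: if $\alpha\neq\delta$ and $d(\alpha)=d(\delta)$ then (KP3) gives $s_\delta^*s_\alpha=0$ outright, while if $d(\alpha)\neq d(\delta)$ then $s_\delta^*s_\alpha$ is homogeneous of nonzero degree $d(\alpha)-d(\delta)$; so the only possible degree-$0$ contribution is the $\alpha=\delta$ term, with no need to arrange distinct degrees among the $\alpha$'s.
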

Proof is the same as that of \cite[Lemma 4.3]{Kumjian Pask Algebra}.
\begin{theorem}
Let $\Lambda$ be a row-finite $\mathbb{N}$-graph with no sources, $R$ a commutative ring with $1$ and $A$ a $\mathbb{Z^N}_0 $ graded ring. If $\pi: KP_R(\Lambda) \mapsto A$ is a $\mathbb{Z^N}_0$-graded ring homomorphism such that $\pi(p_v) \neq 0 $ for all $r \in R\setminus \{0\}$ and $v \in \Lambda^0$ then $\pi$ is injective.
\end{theorem}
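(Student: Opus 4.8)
The plan is to mimic the proof of the graded uniqueness theorem for Kumjian–Pask algebras, \cite[Theorem 4.1]{Kumjian Pask Algebra}, which proceeds by contradiction: assume $\pi$ is not injective, produce a nonzero element in the kernel, and then use the normal-form lemmas above to contradict the hypothesis that $\pi(r p_v) \neq 0$ for all $r \in R \setminus \{0\}$ and $v \in \Lambda^0$. The only structural facts one needs are the $\mathbb{Z}^{\mathbb{N}}_0$-grading of $KP_R(\Lambda)$ (part (ii) of the existence theorem), the two normal-form lemmas immediately preceding this statement, and the fact that a $\mathbb{Z}^{\mathbb{N}}_0$-graded homomorphism carries homogeneous components to homogeneous components. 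Since $\mathbb{Z}^{\mathbb{N}}_0$ is still a (countable, torsion-free) abelian group, none of the arguments in \cite{Kumjian Pask Algebra} that exploit the group structure of $\mathbb{Z}^k$ break down when $k$ is replaced by $\mathbb{N}$.

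\textbf{The argument in steps.} First, suppose for contradiction that $\ker \pi \neq 0$, and pick a nonzero $x \in \ker \pi$. By the first normal-form lemma, write $x = \sum_{(\alpha,\beta) \in F} r_{\alpha,\beta} s_\alpha s_\beta^*$ in normal form, with all $\beta$ of common degree. Second, apply the second normal-form lemma: there is a $\gamma \in F_2$ and a $\delta \in G = \{\alpha : (\alpha,\gamma) \in F\}$ such that $y := s_\delta^* x s_\gamma$ is nonzero, with $0$-graded component exactly $r_{\delta,\gamma} p_{s(\delta)}$, where $r_{\delta,\gamma} \in R \setminus \{0\}$. Third, observe that $y \in \ker \pi$, since $\ker \pi$ is an ideal; hence $\pi(y) = 0$. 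Fourth, because $\pi$ is $\mathbb{Z}^{\mathbb{N}}_0$-graded, $\pi$ sends the $0$-graded component of $y$ to the $0$-graded component of $\pi(y) = 0$, so $\pi(r_{\delta,\gamma} p_{s(\delta)}) = 0$. But $r_{\delta,\gamma} \neq 0$ and $s(\delta) \in \Lambda^0$, contradicting the hypothesis on $\pi$. Therefore $\ker \pi = 0$ and $\pi$ is injective.

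\textbf{Where the work sits.} There is essentially no new obstacle here: the two normal-form lemmas have already done the heavy lifting, and the only thing to verify with any care is step four — that a graded homomorphism into a $\mathbb{Z}^{\mathbb{N}}_0$-graded ring kills the zero-component of a kernel element. This follows because $\pi(y) = \sum_n \pi(y_n)$ with $\pi(y_n) \in A_n$, so $\pi(y) = 0$ forces each $\pi(y_n) = 0$ by directness of the grading sum; applying this with $n = 0$ gives the claim. One should also note in passing that $y$ does in fact lie in $KP_R(\Lambda)$ and is genuinely nonzero as guaranteed by the lemma, and that $s(\delta)$ makes sense as a vertex since $\delta \in \Lambda$. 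Given all this, the cleanest writeup is simply: "The proof is the same as that of \cite[Theorem 4.1]{Kumjian Pask Algebra}, using the two preceding lemmas and the $\mathbb{Z}^{\mathbb{N}}_0$-grading," possibly with the contradiction argument spelled out in the three or four lines above for the reader's convenience.
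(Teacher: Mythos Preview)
Your proposal is correct and takes essentially the same approach as the paper, which simply states that the proof is the same as that of \cite[Theorem 4.1]{Kumjian Pask Algebra}. You have unpacked that reference accurately, invoking exactly the two normal-form lemmas that precede the statement and the $\mathbb{Z}^{\mathbb{N}}_0$-grading in the way the cited argument does.
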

Proof is the same as that of \cite[Theorem 4.1]{Kumjian Pask Algebra}.
We remind the reader of the definition of a $\mathbb{N}$-graph being aperiodic.
\begin{definition}

We say $\Lambda$ is \emph{aperiodic} if for each vertex $v \in \Lambda^{0}$ and each pair $n \neq m \in \mathbb{N^{N}}_{0}$ there is a path $\lambda \in v \Lambda$ such that $d(\lambda) \geq m \vee n$ and $$\lambda(m, m+d(\lambda)-(m \vee n)) \neq \lambda(n, n+ d(\lambda)-(m \vee n))$$ 
\end{definition}

The following is a rephrasing of Lemma \ref{111}.
\begin{lemma}
Let $(\Lambda, d) $ be an aperiodic $\mathbb(N)$-graph with no sources. Suppose that $v \in \Lambda^0$ and $l \in \mathbb{N}^{\mathbb{N}}_{0}$. Then there exists $\lambda \in \Lambda$ such that $r(\lambda)=v, d(\lambda) \geq l$ and $ \alpha, \beta \in \Lambda v , d(\alpha), d(\beta) \leq l $ and $\alpha \neq \beta $ implies $(\alpha \lambda)(0,d(\lambda)) \neq (\beta \lambda)(0,d(\lambda)).$
\end{lemma}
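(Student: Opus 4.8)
The plan is to reduce the statement to the corresponding $k$-graph result, Lemma 6.2 of \cite{HazlewoodRaeburnSimsWebster}, exactly as Lemma \ref{111} is proved. The key point is that every ingredient appearing in the conclusion lives in a single finite-rank subgraph. Given $v \in \Lambda^0$ and $l \in \mathbb{N}^{\mathbb{N}}_0$, first I would choose $k$ large enough that $l \in \mathbb{N}^k$ (i.e. $l$ is supported on the first $k$ coordinates). Then ${}^k\Lambda$ is a $k$-graph by the earlier theorem, it is row-finite with no sources since $\Lambda$ is, and — crucially — it is aperiodic: for any $v$ and any $n \neq m$ with both $n,m \in \mathbb{N}^k$, the aperiodicity of $\Lambda$ provides a path $\lambda \in v\Lambda$ witnessing the inequality, but we must be able to take $\lambda \in {}^k\Lambda$.

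The main obstacle is precisely this last point: the $\lambda$ supplied by aperiodicity of $\Lambda$ need not a priori have degree supported on the first $k$ coordinates. So the first substantive step is to prove (or invoke) that aperiodicity of $\Lambda$ implies aperiodicity of each ${}^k\Lambda$; this is the analogue of the remark, used implicitly in Theorem \ref{regquotient}, that the aperiodicity condition only ever involves finitely many coordinates at a time. Concretely, given $n \neq m$ supported on the first $k$ coordinates, apply $\Lambda$-aperiodicity to get some $\lambda' \in v\Lambda$ with $d(\lambda') \geq m \vee n$ and the two sub-paths of $\lambda'$ disagreeing. Write $d(\lambda') = p + q$ where $p$ is the projection of $d(\lambda')$ onto the first $k$ coordinates and $q$ onto the rest, and use the factorization property to split $\lambda' = \lambda'' \xi$ with $d(\lambda'') = p \in \mathbb{N}^k$. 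Since $m,n$ are supported on the first $k$ coordinates and $d(\lambda'') \geq m \vee n$ there, the disagreement of the sub-paths of $\lambda'$ at coordinates $m$ and $n$ is already visible in $\lambda''$; hence $\lambda'' \in {}^k\Lambda$ witnesses aperiodicity of ${}^k\Lambda$ at $v$ for the pair $m,n$.

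Once ${}^k\Lambda$ is known to be an aperiodic, row-finite $k$-graph with no sources, apply \cite[Lemma 6.2]{HazlewoodRaeburnSimsWebster} to the vertex $v$ and the element $l \in \mathbb{N}^k$: this yields $\lambda \in {}^k\Lambda$ with $r(\lambda) = v$, $d(\lambda) \geq l$, and the property that whenever $\alpha,\beta \in ({}^k\Lambda)v$ with $d(\alpha), d(\beta) \leq l$ and $\alpha \neq \beta$, then $(\alpha\lambda)(0,d(\lambda)) \neq (\beta\lambda)(0,d(\lambda))$. It then remains only to observe that this $\lambda$ works inside $\Lambda$: if $\alpha, \beta \in \Lambda v$ with $d(\alpha), d(\beta) \leq l$, then since $l \in \mathbb{N}^k$ we automatically have $\alpha,\beta \in {}^k\Lambda$, so the conclusion transfers verbatim. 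This closes the proof. I expect the bookkeeping around "supported on the first $k$ coordinates" versus "lies in $\mathbb{N}^k$" to be the only place requiring care; everything else is a direct citation, which is why the paper simply records the result as a rephrasing of Lemma \ref{111}.
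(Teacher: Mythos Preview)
Your approach is not the paper's, and the key step has a gap. Both here and in Lemma~\ref{111} the paper does \emph{not} reduce to ${}^k\Lambda$; it simply asserts that the proof of \cite[Lemma~6.2]{HazlewoodRaeburnSimsWebster} carries over verbatim. That works because the HRSW argument only ever uses the finitely many pairs $m\ne n$ with $m,n\le l$ (and this set is finite since $l\in\mathbb{N}^{\mathbb{N}}_0$ has finite support), applying aperiodicity of $\Lambda$ itself once per pair and concatenating. Nothing in that proof distinguishes $\mathbb{N}^k$ from $\mathbb{N}^{\mathbb{N}}_0$.

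The gap in your reduction is the claim that the disagreement of sub-paths of $\lambda'$ is ``already visible in $\lambda''$''. With $D=d(\lambda')-(m\vee n)$ and $D'=p-(m\vee n)$ one has
\[
\lambda'(m,m+D)=\lambda''(m,m+D')\cdot\lambda'(m+D',\,m+D'+q),
\]
and similarly with $n$ in place of $m$; the inequality $\lambda'(m,m+D)\ne\lambda'(n,n+D)$ can come entirely from the degree-$q$ tails while $\lambda''(m,m+D')=\lambda''(n,n+D')$. For instance, in a single-vertex $2$-graph with one blue loop $e$ and red loops $f_1,f_2$ satisfying $ef_1=f_2e$, taking $m=(0,0)$, $n=(1,0)$, $\lambda'=ef_1=f_2e$ gives $\lambda'(m,m+D)=f_2\ne f_1=\lambda'(n,n+D)$, yet the truncation $\lambda''=e$ yields only the vertex on both sides. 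So your truncation need not produce an aperiodicity witness in ${}^k\Lambda$, and you have not established that ${}^k\Lambda$ is aperiodic; since the witnesses supplied by $\Lambda$-aperiodicity may have degree with support outside the first $k$ coordinates, it is not clear this can be repaired by enlarging $k$. The clean fix is to drop the reduction and run the HRSW argument directly in $\Lambda$, as the paper does.
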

Proof is the same as in \cite[Lemma 6.2]{HazlewoodRaeburnSimsWebster}.
\begin{proposition}
Let $\Lambda$ be an aperiodic row-finite $\mathbb{N}$-graph without sources and let $R$ be a commutative ring with 1. Let $x = \sum_{(\alpha,\beta) \in F} r_{\alpha, \beta} s_{\alpha} s_{\beta}^*$ be a nonzero element of $KP_R(\Lambda)$ in normal form. Then there exists $\sigma, \tau \in \Lambda, (\delta, \gamma) \in F$ and $w \in \Lambda^0$ such that $s_{\sigma}^* x s_{\tau} = r_{\delta, \gamma}p_w.$
\end{proposition}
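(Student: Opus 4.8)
The plan is to strip $x$ down to a scalar multiple of a single vertex idempotent in three moves: first use the normal-form lemmas above to replace $x$ by a finite sum of genuine paths all sharing one common source; then invoke aperiodicity, in the form of Lemma~\ref{111}, to manufacture one long path $\lambda$ which separates those genuine paths; and finally pre- and post-multiply by $s_{\delta\lambda}^{*}$ and $s_{\gamma\lambda}$ to annihilate every term but one.

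In more detail, write $x=\sum_{(\alpha,\beta)\in F}r_{\alpha,\beta}s_{\alpha}s_{\beta}^{*}$ in normal form, so every $\beta$ appearing has one fixed degree $n$. By the lemma above which, from a normal form, produces an index $\gamma$ with $0\neq xs_{\gamma}=\sum_{\alpha\in G}r_{\alpha,\gamma}s_{\alpha}$, where $G=\{\alpha:(\alpha,\gamma)\in F\}$, and since $(\alpha,\gamma)\in F$ forces $s(\alpha)=s(\gamma)$, all of these $\alpha$ have source $w_{0}:=s(\gamma)$, i.e.\ $G\subseteq\Lambda w_{0}$. Fix any $\delta\in G$ and set $l:=\bigvee_{\alpha\in G}d(\alpha)\in\mathbb{N}^{\mathbb{N}}_{0}$ (a finite join), enlarging $l$ to some strictly larger element if necessary — possible since $\Lambda$ has no sources — so that $l\neq 0$. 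Applying Lemma~\ref{111} at $w_{0}$ with this $l$ yields $\lambda\in\Lambda$ with $r(\lambda)=w_{0}$, $d(\lambda)\geq l$, and the property that $\mu\mapsto(\mu\lambda)(0,d(\lambda))$ is injective on $\{\mu\in\Lambda w_{0}:d(\mu)\leq l\}$, hence in particular on $G$.

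Now I would compute $s_{\delta\lambda}^{*}\,x\,s_{\gamma\lambda}$. Since $s(\gamma)=w_{0}=r(\lambda)$ we have $s_{\gamma}s_{\lambda}=s_{\gamma\lambda}$, and likewise $s_{\alpha}s_{\lambda}=s_{\alpha\lambda}$ for every $\alpha\in G$, so
$$s_{\delta\lambda}^{*}\,x\,s_{\gamma\lambda}=s_{\delta\lambda}^{*}(xs_{\gamma})s_{\lambda}=\sum_{\alpha\in G}r_{\alpha,\gamma}\,s_{\delta\lambda}^{*}s_{\alpha\lambda}.$$
The heart of the matter is that $s_{\delta\lambda}^{*}s_{\alpha\lambda}$ equals $p_{s(\lambda)}$ if $\alpha=\delta$ and $0$ otherwise. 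To see this, use that $d(\delta),d(\alpha)\leq l\leq d(\lambda)$ to factor $\delta\lambda=\xi_{\delta}\eta_{\delta}$ and $\alpha\lambda=\xi_{\alpha}\eta_{\alpha}$ with $d(\xi_{\delta})=d(\xi_{\alpha})=d(\lambda)$ (the cut lands inside the $\lambda$-part), so that $\xi_{\delta}=(\delta\lambda)(0,d(\lambda))$ and $\xi_{\alpha}=(\alpha\lambda)(0,d(\lambda))$; then $s_{\delta\lambda}^{*}s_{\alpha\lambda}=s_{\eta_{\delta}}^{*}\bigl(s_{\xi_{\delta}}^{*}s_{\xi_{\alpha}}\bigr)s_{\eta_{\alpha}}$, and $s_{\xi_{\delta}}^{*}s_{\xi_{\alpha}}$ is $p_{s(\xi_{\delta})}$ or $0$ according as $\xi_{\delta}=\xi_{\alpha}$ or not, by (KP3) — legitimate because $d(\lambda)\neq 0$. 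By the separation property of Lemma~\ref{111}, $\xi_{\delta}=\xi_{\alpha}$ exactly when $\alpha=\delta$, and in that case the expression collapses to $s_{\eta_{\delta}}^{*}s_{\eta_{\delta}}=p_{s(\eta_{\delta})}=p_{s(\lambda)}$. Summing, $s_{\delta\lambda}^{*}\,x\,s_{\gamma\lambda}=r_{\delta,\gamma}\,p_{s(\lambda)}$, so $\sigma:=\delta\lambda$, $\tau:=\gamma\lambda$, the pair $(\delta,\gamma)\in F$, and $w:=s(\lambda)$ are as required.

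The step I expect to be the main obstacle is the degree bookkeeping hidden in the factorization $\delta\lambda=\xi_{\delta}\eta_{\delta}$ with $d(\xi_{\delta})=d(\lambda)$: because $d(\delta)$ and $d(\alpha)$ need not be comparable in $\mathbb{N}^{\mathbb{N}}_{0}$, neither $\delta,\alpha$ nor $\delta\lambda,\alpha\lambda$ can be compared directly, and the whole reason for pre-multiplying by $s_{\delta\lambda}^{*}$ rather than by $s_{\delta}^{*}$ — and for insisting that $\lambda$ be long — is to route every comparison through the single common degree $d(\lambda)$, where (KP3) bites and where Lemma~\ref{111} supplies injectivity. The remaining loose ends are minor: ensuring $d(\lambda)\neq 0$ (arranged by enlarging $l$, using that $\Lambda$ has no sources) and noting that the degenerate case in which $G$ consists only of vertices is handled verbatim by the same computation.
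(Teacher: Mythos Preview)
Your argument is correct and follows essentially the same route as the paper's referenced proof \cite[Proposition~4.9]{Kumjian Pask Algebra}: use the normal-form lemma to isolate $xs_\gamma=\sum_{\alpha\in G}r_{\alpha,\gamma}s_\alpha$, invoke the aperiodicity lemma (Lemma~\ref{111}) at $s(\gamma)$ to obtain a separating $\lambda$, and then sandwich by $s_{\delta\lambda}^*$ and $s_{\gamma\lambda}$ to kill all but the $\alpha=\delta$ term. Your handling of the key step --- factoring $\delta\lambda$ and $\alpha\lambda$ at the common degree $d(\lambda)$ so that (KP3) applies directly --- is a mild streamlining of the original, which instead first expands each $s_\delta^*s_\alpha$ via minimal common extensions before applying $\lambda$; both reach the same conclusion by the same mechanism.
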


\begin{theorem}
Let $\Lambda$ be an aperiodic, row-finite $\mathbb{N}$-graph with no sources, let $R$ be a commutative ring with $1$, and let $A$ be aring. If $\pi: KP_R(\Lambda) \mapsto A$ is a ring homomorphism such that $\pi(rp_v) \neq 0$ forall $r \in R \setminus \{0\}$ and $v \in \Lambda^0,$ then $\pi$ is injective.
\end{theorem}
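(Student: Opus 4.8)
The plan is to mimic the Cuntz--Krieger uniqueness argument of \cite[Theorem 4.4]{Kumjian Pask Algebra}, now for the $\mathbb{N}$-graph setting, using the structural lemmas already assembled above. The heart of the matter is the ``reduction to a vertex'' step: given any nonzero $x\in KP_R(\Lambda)$, written in normal form as $x=\sum_{(\alpha,\beta)\in F}r_{\alpha,\beta}s_\alpha s_\beta^*$, produce $\sigma,\tau\in\Lambda$ and $(\delta,\gamma)\in F$ and $w\in\Lambda^0$ with $s_\sigma^* x s_\tau = r_{\delta,\gamma}p_w$. This is exactly the Proposition immediately preceding the theorem, so the bulk of the work is already done; what remains is to package it.

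First I would take $\pi:KP_R(\Lambda)\to A$ a ring homomorphism with $\pi(rp_v)\neq 0$ for all $r\in R\setminus\{0\}$ and $v\in\Lambda^0$, and let $x$ be a nonzero element of $\ker\pi$; the goal is a contradiction. Write $x$ in normal form via Lemma~4.2 (the normal-form lemma stated above). Apply the Proposition to obtain $\sigma,\tau\in\Lambda$, a pair $(\delta,\gamma)\in F$, and $w\in\Lambda^0$ with $s_\sigma^* x s_\tau = r_{\delta,\gamma}p_w$. Since $\pi$ is a homomorphism and $x\in\ker\pi$, we get $0=\pi(s_\sigma^* x s_\tau)=\pi(r_{\delta,\gamma}p_w)=\pi(r_{\delta,\gamma}p_w)$. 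But $r_{\delta,\gamma}\in R\setminus\{0\}$ by the definition of normal form, so $\pi(r_{\delta,\gamma}p_w)\neq 0$ by hypothesis --- a contradiction. Hence $\ker\pi=0$ and $\pi$ is injective.

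The only real content beyond citing the Proposition is observing that the Proposition's hypotheses are met: $\Lambda$ is aperiodic, row-finite, with no sources, and $R$ is a commutative ring with $1$ --- precisely the theorem's standing assumptions. One should also note that the Proposition supplies the crucial fact that the resulting scalar $r_{\delta,\gamma}$ comes from the finite set $F$ appearing in the normal form and is therefore nonzero, which is what lets us invoke the ``$\pi(rp_v)\neq 0$'' hypothesis. No grading or gauge action is needed here, in contrast to the graded uniqueness theorem; this parallels the $C^*$-algebra situation where the Cuntz--Krieger uniqueness theorem does not pass through the inductive-limit decomposition.

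I expect the main obstacle to have been absorbed into the preceding Proposition --- namely the careful use of aperiodicity (via the rephrased Lemma~\ref{111}) to ``shrink'' the non-diagonal terms $s_\delta^* s_\alpha$ away after multiplying on the right by a suitable path $\lambda$ with $d(\lambda)$ large, exactly as in \cite[Lemma 6.2]{HazlewoodRaeburnSimsWebster} and \cite[Proposition 4.9]{Kumjian Pask Algebra}. Given that Proposition, the theorem itself is a short formal deduction, so in the write-up I would simply say the proof is the same as that of \cite[Theorem 4.4]{Kumjian Pask Algebra}, applying the Proposition above in place of its $k$-graph analogue.
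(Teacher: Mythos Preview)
Your proposal is correct and follows exactly the approach the paper takes: the paper simply states that the proof is the same as \cite[Theorem 4.7]{Kumjian Pask Algebra}, invoking the preceding Proposition (the $\mathbb{N}$-graph analogue of \cite[Proposition 4.9]{Kumjian Pask Algebra}) to reduce a nonzero kernel element to a nonzero $r_{\delta,\gamma}p_w$ and obtain a contradiction. Your only discrepancy is the citation number (Theorem~4.4 versus Theorem~4.7), likely a preprint-versus-published numbering difference.
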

Proofs are the same as \cite[Proposition 4.9]{Kumjian Pask Algebra} and \cite[Theorem 4.7]{Kumjian Pask Algebra} respectively.

\section{Ideal structure of $KP_R(\Lambda)$}
As was the case with the uniqueness theorems we are also able to get the structure of the graded ideals without using our inductive limit. Again this marks a difference in the analytic and algebraic settings. This work follows \cite{Kumjian Pask Algebra}.
\begin{definition}
We say an ideal $I$ is \emph{basic} If $rp_v \in I$ implies $p_v \in I$.
\end{definition}
\begin{lemma}
Let $\Lambda$ be a row-finite $\mathbb{N}$-graph with no sources. Let $I$ be an ideal of $KP_R(\Lambda)$. Then $H(I) := \{ v \in \Lambda^0 : p_v \in I\},$ is a saturated and hereditary set.
\end{lemma}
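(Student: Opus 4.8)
The plan is to verify the two closure properties directly from the definitions, exactly as one does for $k$-graph and Leavitt path algebras. Let $I$ be an ideal of $KP_R(\Lambda)$ and set $H = H(I) = \{v \in \Lambda^0 : p_v \in I\}$.

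\textbf{Hereditary.} First I would show that if $v \in H$ and $\lambda \in \Lambda$ with $r(\lambda) = v$, then $s(\lambda) \in H$. The point is the relation $p_{s(\lambda)} = s_\lambda^* s_\lambda = s_\lambda^* p_{r(\lambda)} s_\lambda$, which follows from (KP2) and (KP3) (with the convention $p_v = s_v = s_{v^*}$ for vertices, so this also covers the case $\lambda \in \Lambda^0$). Since $p_{r(\lambda)} = p_v \in I$ and $I$ is an ideal, $s_\lambda^* p_v s_\lambda \in I$, hence $p_{s(\lambda)} \in I$, i.e. $s(\lambda) \in H$. That is all hereditariness needs.

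\textbf{Saturated.} Next I would show that if $v \in \Lambda^0$ and there is some $n \in \mathbb{N}^{\mathbb{N}}_0$ (one may take $n \neq 0$, since $n = 0$ is trivial) with $s(v\Lambda^n) \subseteq H$, then $v \in H$. Here I use (KP4): $p_v = \sum_{\lambda \in v\Lambda^n} s_\lambda s_\lambda^*$, a finite sum by row-finiteness. For each $\lambda \in v\Lambda^n$ we have $s(\lambda) \in H$ by hypothesis, so $p_{s(\lambda)} \in I$; then $s_\lambda s_\lambda^* = s_\lambda p_{s(\lambda)} s_\lambda^* \in I$ because $I$ is an ideal and $s_\lambda P_{s(\lambda)} = s_\lambda$ by (KP2). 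Summing the finitely many terms, $p_v \in I$, so $v \in H$.

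Neither step is really an obstacle: the content is entirely in writing down the right relation from (KP1)--(KP4) and invoking that $I$ is a two-sided ideal. The only mild care needed is the bookkeeping around vertices as degenerate paths (so that the hereditary argument covers $\lambda \in \Lambda^0$ as well) and the use of row-finiteness to keep the sum in (KP4) finite so that it stays inside the ideal. If anything deserves a sentence of comment, it is that $n=0$ gives no information in the saturation condition (since $v\Lambda^0 = \{v\}$), so the condition is only interesting for $n \neq 0$, which is exactly the range of $n$ appearing in (KP4).
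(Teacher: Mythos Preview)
Your proof is correct and is exactly the standard argument: the paper itself does not spell out a proof but simply cites \cite[Lemma 5.2]{Kumjian Pask Algebra}, whose proof is precisely the direct verification from (KP2)--(KP4) that you wrote out. Nothing is missing.
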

Proof is the same as that of \cite[Lemma 5.2]{Kumjian Pask Algebra}.
\begin{lemma}
Let $\Lambda$ be a row-finite $\mathbb{N}$-graph with no sources, and $H$ a saturated and hereditary subset of $\Lambda^0$. Then $\Lambda \setminus H$, the small category with objects $ \Lambda^{0} \setminus H$, and morphisms $ \{ \lambda \in \Lambda : r(\lambda) \text{ and } s(\lambda) \in \Lambda^{0} \setminus H(I) \}$, with the factorization property $d$ inherited from $(\Lambda, d)$.
    is a row-finite $\mathbb{N}$-graph with no sources, and if $(Q,T)$ is a kumjian-Pask family for $\Lambda \setminus H$ in an $R$-algebra $A$, then $$ P_v = 
    \begin{dcases}Q_v & \text{ if } v \notin H\\
    0 & \text{ otherwise}
    \end{dcases}$$
    $$ S_\lambda = 
    \begin{dcases}T_\lambda & \text{ if } s(\lambda) \notin H\\
    0 & \text{ otherwise}
    \end{dcases}$$
    $$ S_\mu ^* = 
    \begin{dcases}T_\mu ^* & \text{ if } s(\mu) \notin H\\
    0 & \text{ otherwise}
    \end{dcases}$$
form a Kumjian-Pask $\Lambda$-family $(P,S)$ in $A$.
\end{lemma}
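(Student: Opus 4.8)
The statement has two parts: first, that $\Lambda \setminus H$ is a row-finite $\mathbb{N}$-graph with no sources; second, that the given formulas for $(P,S)$ define a Kumjian--Pask $\Lambda$-family. The first part is essentially identical to the argument in Theorem~\ref{quotient}: one checks that $\Lambda \setminus H$ is closed under composition (clear from the definition), that the range and source maps restrict correctly (source by definition, range because $H$ is hereditary), that the factorization property is inherited (if $\lambda = \mu\nu \in \Lambda\setminus H$ then $s(\nu) = s(\lambda) \notin H$ and $r(\nu) = s(\mu) \notin H$ by heredity, so both $\mu,\nu \in \Lambda\setminus H$), and that no sources arise (if $v(\Lambda\setminus H)^m = \emptyset$ while $v\Lambda^m \neq \emptyset$ then $s(v\Lambda^m)\subseteq H$, so saturation forces $v \in H$, a contradiction). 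Row-finiteness is immediate from that of $\Lambda$.

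For the second part, the plan is to verify (KP1)--(KP4) directly for $(P,S)$, leveraging that $(Q,T)$ is a Kumjian--Pask $\Lambda\setminus H$-family. (KP1): the $P_v$ are mutually orthogonal idempotents since the nonzero ones are exactly the $Q_v$ with $v\notin H$, which are mutually orthogonal idempotents, and adding zeros changes nothing. (KP2): given $\lambda,\mu \in \Lambda^{\neq 0}$ with $r(\mu) = s(\lambda)$, one does a case analysis on whether $s(\lambda), s(\mu) \in H$; the key observation (using heredity) is that $s(\mu)\in H$ forces $s(\lambda)\in H$, so the only cases are (both sources out of $H$) — where the identities reduce to those for $(Q,T)$ — and ($s(\mu)\in H$, hence $s(\lambda)$ may or may not be in $H$), where enough factors vanish to make both sides zero; one checks each of the five relations $S_\lambda S_\mu = S_{\lambda\mu}$, $S_{\mu^*}S_{\lambda^*} = S_{(\lambda\mu)^*}$, $P_{r(\lambda)}S_\lambda = S_\lambda = S_\lambda P_{s(\lambda)}$, $P_{s(\lambda)}S_{\lambda^*} = S_{\lambda^*} = S_{\lambda^*}P_{r(\lambda)}$ accordingly. (KP3): for $\lambda,\mu \in \Lambda^n$ with $n\neq 0$, if $s(\lambda)\notin H$ and $s(\mu)\notin H$ then $S_{\lambda^*}S_\mu = T_{\lambda^*}T_\mu = \delta_{\lambda,\mu}Q_{s(\lambda)} = \delta_{\lambda,\mu}P_{s(\lambda)}$ (noting $\lambda = \mu$ forces their sources equal), and otherwise at least one factor is zero, and $\delta_{\lambda,\mu}P_{s(\lambda)}$ is also zero in those cases.

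The main obstacle is (KP4): for $v \in \Lambda^0$ and $n \in \mathbb{N}^{\mathbb{N}}_0\setminus\{0\}$ we must show $P_v = \sum_{\lambda \in v\Lambda^n} S_\lambda S_\lambda^*$. If $v \in H$ then $P_v = 0$, and for each $\lambda \in v\Lambda^n$ we have $r(\lambda) = v \in H$, so $s(\lambda) \in H$ by heredity, whence $S_\lambda = 0$; thus both sides vanish. If $v\notin H$, split $v\Lambda^n$ into $\{\lambda : s(\lambda)\notin H\}$ and $\{\lambda : s(\lambda)\in H\}$; the latter contribute zero to the sum, while the former is exactly $v(\Lambda\setminus H)^n$, so $\sum_{\lambda\in v\Lambda^n} S_\lambda S_\lambda^* = \sum_{\lambda \in v(\Lambda\setminus H)^n} T_\lambda T_\lambda^* = Q_v = P_v$ by (KP4) for $(Q,T)$. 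The subtlety worth stating carefully is precisely this decomposition of $v\Lambda^n$ and the use of heredity/saturation to know it matches the edge set of $\Lambda\setminus H$; once that is in place the verification is routine and runs parallel to \cite[Lemma~5.3]{Kumjian Pask Algebra}.
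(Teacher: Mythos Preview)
Your proposal is correct and follows exactly the route the paper indicates: the first part is the argument of Theorem~\ref{quotient}, and the second part is the direct verification of (KP1)--(KP4) as in \cite[Lemma~5.3]{Kumjian Pask Algebra}. One small slip: in your (KP2) discussion the heredity implication is stated backwards---since $s(\lambda)=r(\mu)$, heredity gives $s(\lambda)\in H \Rightarrow s(\mu)\in H$ (not the reverse), which is in fact what your subsequent case split uses.
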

The proof of $\Lambda \setminus H$ being a row-finite $\mathbb{N}$-graph with no sources is the same as $C^*$-algebra case, Theorem \ref{quotient}. The rest of the proof follows as in \cite[Lemma 5.3]{Kumjian Pask Algebra}.

\begin{lemma}
Let $\Lambda$ be a row-finite $\mathbb{N}$-graph with no sources. Let $H$ be a saturated Hereditary subset of $\Lambda^0.$ Then $$ I(H) = span_R \{s_\lambda s_\mu ^* : s(\lambda) = s(\mu) \in H \},$$ and $I(H)$ is a basic, graded and idempotent ideal of $KP_R(\Lambda),$ and $H(I(H)) = H.$
\end{lemma}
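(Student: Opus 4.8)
The plan is to mimic the $C^*$-algebraic proof of Theorem~\ref{lattice}, but now working with spans rather than closed spans, and to lean on the known corresponding result for $k$-graphs from \cite{Kumjian Pask Algebra}. First I would establish the spanning description $I(H) = \operatorname{span}_R\{s_\lambda s_\mu^* : s(\lambda) = s(\mu) \in H\}$. The inclusion $\supseteq$ is immediate since each $s_\lambda s_\mu^*$ with $s(\lambda)=s(\mu)=v\in H$ equals $s_\lambda p_v s_\mu^*$, hence lies in the ideal generated by $\{p_v : v\in H\}$. For $\subseteq$, I would show the right-hand side is already a (two-sided) ideal: using $KP_R(\Lambda) = \operatorname{span}_R\{s_\alpha s_\beta^* : s(\alpha)=s(\beta)\}$, it suffices to check that $s_\alpha s_\beta^* \cdot s_\lambda s_\mu^*$ lies in the right-hand side whenever $s(\lambda)=s(\mu)\in H$. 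Applying the formula $s_\beta^* s_\lambda = \sum_{\beta\eta = \lambda\zeta,\ d(\beta\eta)=d(\beta)\vee d(\lambda)} s_\eta s_\zeta^*$ from the theorem preceding Theorem~6.5 reduces the product to a sum of terms $s_{\alpha\eta} s_{\mu\zeta}^*$; since $s(\eta) = s(\beta\eta)$ and $r(\eta)$ lies on a path from $r(\lambda)$, and $H$ is hereditary, each surviving term has source in $H$ (or is zero when the source fails to match), so it lies in the right-hand side. Symmetrically for multiplication on the other side. Since the right-hand side contains each $p_v$, $v\in H$, and is contained in any ideal containing them, it equals $I(H)$.

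Next, the grading: each generator $s_\lambda s_\mu^*$ is homogeneous of degree $d(\lambda)-d(\mu) \in \mathbb{Z}^{\mathbb{N}}_0$, so $I(H) = \bigoplus_n (I(H)\cap KP_R(\Lambda)_n)$ is graded. For idempotency, observe $s_\lambda s_\mu^* = (s_\lambda p_v)(p_v s_\mu^*)$ where $v = s(\lambda)=s(\mu)\in H$, and $p_v = p_v p_v \in I(H)$, so $s_\lambda s_\mu^* = s_\lambda p_v \cdot p_v \cdot p_v s_\mu^* \in I(H)^2$; hence $I(H) = I(H)^2$. For basicness, suppose $r p_v \in I(H)$ for some $r\in R\setminus\{0\}$; I must deduce $v\in H$. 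Here I would pass to the quotient: let $q\colon KP_R(\Lambda)\to KP_R(\Lambda)/I(H)$ and use the previous lemma to identify $KP_R(\Lambda)/I(H) \cong KP_R(\Lambda\setminus H)$ (via the universal property applied to the family $(P,S)$ built from the canonical $\Lambda\setminus H$-family — this is exactly what that lemma sets up, modulo checking the induced map is an isomorphism, which follows from the graded uniqueness Theorem~6.11 since $P_v = Q_v \neq 0$ for $v\notin H$ and the $\mathbb{Z}^{\mathbb{N}}_0$-grading is respected). Then $r p_v \in I(H)$ means $r\,q(p_v) = 0$ in $KP_R(\Lambda\setminus H)$; if $v\notin H$ this is $r P_{v} = 0$ with $P_v$ the image of $p_v \in KP_R(\Lambda\setminus H)$, contradicting the property $r p_w \neq 0$ for all $w\in(\Lambda\setminus H)^0$, $r\neq 0$, guaranteed by Theorem~6.5(i). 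Hence $v\in H$, so $I(H)$ is basic.

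Finally, $H(I(H)) = H$: the inclusion $H \subseteq H(I(H))$ is clear since $p_v \in I(H)$ for $v\in H$. Conversely, if $v\in H(I(H))$ then $p_v \in I(H)$, so by basicness (or directly, since $p_v = 1\cdot p_v$) and the argument just given, $p_v\in I(H)$ forces $v\in H$: in the quotient $q(p_v) = 0$, i.e. $P_v = 0$ in $KP_R(\Lambda\setminus H)$, which by $rP_w \neq 0$ (with $r=1$) for $w\notin H$ is impossible unless $v\in H$.

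The main obstacle I anticipate is the $\subseteq$ direction of the spanning description, specifically making the bookkeeping with the $s_\beta^* s_\lambda$ expansion precise enough to see that every surviving term in a product $s_\alpha s_\beta^* s_\lambda s_\mu^*$ genuinely has source in $H$ — one must track that in each nonzero term $s_{\alpha\eta}s_{\mu\zeta}^*$ the common source $s(\alpha\eta) = s(\eta)$ satisfies $s(\eta) = s(\zeta)$ and that $r(\zeta)$ (hence a vertex reachable from $s(\lambda)\in H$ along $\zeta$, read backwards) forces $s(\eta)\in H$ by heredity applied to the path $\mu\zeta$ with $r(\mu\zeta) = r(\mu)$... more carefully, one uses that $s(\zeta)=s(\eta)$ lies at the end of the path $\zeta$ whose range $r(\zeta) = s(\lambda)\vee$-side equals a vertex on $\lambda$, which is in $T(r(\lambda))$; the cleanest phrasing is that $s(\eta) = s(\lambda\zeta') $ for the appropriate factorization and $r(\lambda)\cdots$. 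Since this is precisely the computation carried out for $k$-graphs in \cite[Lemma 5.4]{Kumjian Pask Algebra}, I would cite that the argument goes through verbatim once the factorization formula (valid here by Theorem~6.5's predecessor) is in hand, and only spell out the heredity step.
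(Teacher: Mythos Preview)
Your proposal is correct and follows essentially the same route as the paper, which simply records that the proof of \cite[Lemma~5.4]{Kumjian Pask Algebra} goes through verbatim in the $\mathbb{N}$-graph setting. One minor simplification: for basicness and for $H(I(H))\subseteq H$ you do not need the full quotient isomorphism $KP_R(\Lambda)/I(H)\cong KP_R(\Lambda\setminus H)$ (which is the content of the \emph{next} lemma) --- the universal homomorphism $\pi_{P,S}\colon KP_R(\Lambda)\to KP_R(\Lambda\setminus H)$ from the previous lemma already vanishes on $I(H)$, so $rp_v\in I(H)$ with $v\notin H$ gives $rQ_v=0$ directly, contradicting Theorem~6.5(i).
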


\begin{lemma}
Let $\Lambda$ be a row-finite $\mathbb{N}$-graph with no sources, and $R$ a commutative ring with $1$. Let $I$ be a basic graded ideal of $KP_{R}(\Lambda)$, and let $(q,t)$ and $(p,m)$ be the universal Kumjian-Pask families in $KP_{R}(\Lambda \setminus H(I))$ and $KP_{R}(\Lambda)$, respectively. Then there exists an isomorphism $\pi : KP_{R}(\Lambda \setminus H(I)) \mapsto  KP_{R}(\Lambda)/I$ such that \[ \pi(q_{v}) = p_{v} + I, \pi(t_{\lambda}) = m_{\lambda} + I, \text{ and } \pi(t_{\mu^{*}}) = m_{\mu^{*}} + I    \] for $v \in \Lambda^{0} \setminus H(I) $ and $\lambda , \mu \in \Lambda^{\neq 0} \cap  s^{-1}(\Lambda^{0}/H(I))$.
\end{lemma}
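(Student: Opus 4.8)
The plan is to follow the classical Kumjian–Pask strategy (as in \cite[Theorem~5.4 / Corollary~5.5]{Kumjian Pask Algebra}) and build the isomorphism from both the universal property and the graded uniqueness theorem. First I would use the preceding lemma: since $H(I)$ is saturated and hereditary, the triple $(P,S)$ defined there — namely $P_v = p_v + I$ for $v \notin H(I)$, $S_\lambda = m_\lambda + I$ for $s(\lambda) \notin H(I)$, $S_{\mu^*} = m_{\mu^*} + I$ for $s(\mu) \notin H(I)$, and zero otherwise — is a Kumjian–Pask $(\Lambda \setminus H(I))$-family in the quotient $KP_R(\Lambda)/I$. (One must check this really lands in the quotient: the relevant $p_v$, $m_\lambda$, $m_{\mu^*}$ with $s(\cdot) \in H(I)$ lie in $I$ by the description $I(H(I)) = \mathrm{span}_R\{s_\lambda s_\mu^* : s(\lambda)=s(\mu)\in H(I)\}$ of the previous lemma together with $I(H(I)) \subseteq I$, which holds because $I$ is basic and graded.) By the universal property of $KP_R(\Lambda \setminus H(I))$ there is then a homomorphism $\pi : KP_R(\Lambda \setminus H(I)) \to KP_R(\Lambda)/I$ with $\pi(q_v) = p_v + I$, $\pi(t_\lambda) = m_\lambda + I$, $\pi(t_{\mu^*}) = m_{\mu^*} + I$.

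Next I would show $\pi$ is surjective. The quotient $KP_R(\Lambda)/I$ is spanned by elements $m_\alpha m_\beta^* + I$ with $s(\alpha) = s(\beta)$; if $s(\alpha) \in H(I)$ then $m_\alpha m_\beta^* \in I(H(I)) \subseteq I$ so the coset is zero, and otherwise $m_\alpha m_\beta^* + I = \pi(t_\alpha t_\beta^*)$ lies in the range. Hence $\pi$ is onto.

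For injectivity I would invoke the graded uniqueness theorem \cite[Theorem~4.1]{Kumjian Pask Algebra}: $KP_R(\Lambda \setminus H(I))$ carries its canonical $\mathbb{Z}^{\mathbb{N}}_0$-grading, $KP_R(\Lambda)/I$ is $\mathbb{Z}^{\mathbb{N}}_0$-graded because $I$ is a graded ideal, and $\pi$ is a graded homomorphism since it sends the degree-$d(\lambda)$ generators to degree-$d(\lambda)$ elements. So it remains to verify that $\pi(r q_v) \neq 0$ for every $v \in \Lambda^0 \setminus H(I)$ and every $r \in R \setminus \{0\}$; equivalently $r p_v \notin I$. This is exactly the point where basic-ness is used: if $r p_v \in I$ then, $I$ being basic, $p_v \in I$, i.e. $v \in H(I)$, contradicting $v \notin H(I)$. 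Therefore $\pi(r q_v) \neq 0$, graded uniqueness applies, and $\pi$ is injective, hence an isomorphism with the stated values.

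The main obstacle, and the step deserving the most care, is the verification that the candidate family $(P,S)$ genuinely satisfies (KP1)–(KP4) inside the quotient — in particular that (KP4), $P_v = \sum_{\lambda \in v(\Lambda\setminus H(I))^n} S_\lambda S_{\lambda^*}$, survives the truncation of the sum to paths with source outside $H(I)$: one needs that in $KP_R(\Lambda)/I$ the terms $m_\lambda m_{\lambda^*} + I$ with $s(\lambda) \in H(I)$ vanish, so that the full (KP4) relation for $\Lambda$ collapses to the (KP4) relation for $\Lambda \setminus H(I)$. This is handled by the description of $I(H(I))$ from the preceding lemma, but it is the one place where the interplay of hereditarity, saturation, the graded/basic hypotheses, and the no-sources assumption all come together; everything else is a routine transcription of \cite[Lemma~5.3, Theorem~5.4]{Kumjian Pask Algebra} to the $\mathbb{N}$-graph setting, where the only structural change is that the grading group is $\mathbb{Z}^{\mathbb{N}}_0$ rather than $\mathbb{Z}^k$.
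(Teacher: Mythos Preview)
Your proposal is correct and follows essentially the same route the paper takes, namely the argument of \cite[Proposition~5.5]{Kumjian Pask Algebra}: build a Kumjian--Pask $(\Lambda\setminus H(I))$-family in $KP_R(\Lambda)/I$, apply the universal property to get $\pi$, check surjectivity on spanning elements, and then use the graded uniqueness theorem together with the basic hypothesis to obtain injectivity. One small caution: the ``preceding lemma'' in the paper actually runs in the opposite direction (extending a $(\Lambda\setminus H)$-family to a $\Lambda$-family by zeros), so the verification that $(p_v+I,\, m_\lambda+I,\, m_{\mu^*}+I)$ satisfies (KP1)--(KP4) in the quotient must be done directly---but you already flag exactly this as the point requiring care, and your sketch of it via $I(H(I))\subseteq I$ is the right one.
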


\begin{theorem}
Let $\Lambda$ be a row-finite $\mathbb{N}$-graph with no sources, and $R$ a commutative ring with $1$. Then the map $H \mapsto I(H)$ is a lattice isomorphism from the lattice of saturated, hereditary subsets of $\Lambda^0$ onto the lattice of basic, graded ideals of $KP_R(\Lambda).$
\end{theorem}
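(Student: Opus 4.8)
The plan is to establish the bijection by exhibiting explicit inverse maps $H \mapsto I(H)$ and $I \mapsto H(I)$ and checking they are mutually inverse lattice homomorphisms, using the preceding four lemmas as the main engine. First I would show that $H \mapsto I(H)$ is well-defined into the lattice of basic, graded ideals: by the lemma identifying $I(H)$ with $\operatorname{span}_R\{s_\lambda s_\mu^* : s(\lambda)=s(\mu)\in H\}$, we already know $I(H)$ is basic and graded, so the map lands in the correct lattice.

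Next I would verify the two compositions are identities. For $I \mapsto H(I) \mapsto I(H(I))$: given a basic graded ideal $I$, Lemma giving $H(I)$ saturated hereditary lets us form $I(H(I))$, and then the isomorphism $\pi: KP_R(\Lambda \setminus H(I)) \to KP_R(\Lambda)/I$ shows that $KP_R(\Lambda)/I(H(I)) \cong KP_R(\Lambda \setminus H(I)) \cong KP_R(\Lambda)/I$ in a way compatible with the quotient maps on generators; since both $I(H(I)) \subseteq I$ (as $H(I)$-vertex idempotents lie in $I$) and the quotients agree on the generating set, we conclude $I = I(H(I))$. For the other composition $H \mapsto I(H) \mapsto H(I(H))$, this is exactly the statement $H(I(H)) = H$ from the lemma computing $I(H)$.

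Finally I would check that the bijection respects the lattice operations. Since saturated hereditary subsets are closed under intersection and under the saturated-hereditary closure of unions, and basic graded ideals are closed under intersection and under the ideal generated by a union, it suffices to show $H \mapsto I(H)$ is order-preserving in both directions: $H_1 \subseteq H_2$ clearly gives $I(H_1) \subseteq I(H_2)$ from the spanning description, and conversely if $I(H_1) \subseteq I(H_2)$ then $p_v \in I(H_2)$ for each $v \in H_1$, so $v \in H(I(H_2)) = H_2$ by basicness and the preceding lemma; an order-preserving bijection between lattices whose inverse is order-preserving is a lattice isomorphism.

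The main obstacle I expect is the $I = I(H(I))$ step: one must make precise that the isomorphism $\pi$ from the penultimate lemma, composed with the quotient $KP_R(\Lambda) \to KP_R(\Lambda)/I(H(I))$, really does factor through $KP_R(\Lambda)/I$ and is the identity on generators modulo the respective ideals. The clean way is to apply the graded uniqueness theorem: the composite $KP_R(\Lambda \setminus H(I)) \xrightarrow{\pi} KP_R(\Lambda)/I(H(I)) \to KP_R(\Lambda)/I$ is a graded homomorphism sending $q_v \mapsto p_v + I$, and $p_v + I \neq 0$ exactly when $v \notin H(I)$, so it is injective; surjectivity is clear, forcing $I(H(I)) = I$. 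Everything else reduces to bookkeeping with the four cited lemmas, which I would state but not belabor.
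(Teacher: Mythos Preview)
Your proposal is correct and follows essentially the same route as the paper, which simply defers to the proof of \cite[Theorem 5.1]{Kumjian Pask Algebra}: use the preceding lemmas to show $H \mapsto I(H)$ and $I \mapsto H(I)$ are mutually inverse order-preserving maps, with the key step $I(H(I)) = I$ handled via the quotient description $KP_R(\Lambda)/I \cong KP_R(\Lambda \setminus H(I))$ and graded uniqueness. One small point to tighten: in the graded uniqueness step you should check $r p_v + I \neq 0$ for all nonzero $r \in R$, not just $p_v + I \neq 0$; this is exactly where the hypothesis that $I$ is \emph{basic} is used, since $r p_v \in I$ would force $p_v \in I$ and hence $v \in H(I)$.
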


Proofs are the same as that of \cite[Lemma 5.4]{Kumjian Pask Algebra}, \cite[Proposition 5.5]{Kumjian Pask Algebra} and \cite[Theorem 5.1]{Kumjian Pask Algebra} respectively.

\begin{theorem}
Let $\Lambda$ be a row-finite $\mathbb{N}$-graph with no sources, and $R$ a commutative ring with $1$. Let $J$ be a basic ideal in $KP_{R}(\Lambda)$, then $I(H(J))$ is the largest basic, graded ideal contained in $J$.
\end{theorem}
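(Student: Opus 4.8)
The plan is to follow the analogous $C^*$-algebra argument (the Lemma stating $I(H(J))$ is the largest gauge-invariant ideal contained in $J$, itself citing \cite[Theorem 5.6]{Schenkel}) but transported into the algebraic setting using the basic, graded ideal machinery already established above. First I would check that $I(H(J)) \subseteq J$: by definition $H(J) = \{v \in \Lambda^0 : p_v \in J\}$, so every generator $p_v$ of $I(H(J))$ lies in $J$, and since $J$ is an ideal the ideal it generates, namely $I(H(J))$, is contained in $J$. Next, $I(H(J))$ is basic and graded: this is exactly the content of the Lemma asserting that for a saturated hereditary $H$, $I(H) = \operatorname{span}_R\{s_\lambda s_\mu^* : s(\lambda) = s(\mu) \in H\}$ is a basic, graded, idempotent ideal — and we know $H(J)$ is saturated and hereditary by the Lemma that says $H(I)$ is saturated and hereditary for any ideal $I$.

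The substantive part is maximality: if $K$ is any basic, graded ideal with $K \subseteq J$, then $K \subseteq I(H(J))$. The key step is to show $K = I(H(K))$, i.e. that a basic graded ideal is recovered from its vertex set. This follows from the lattice isomorphism theorem above (the map $H \mapsto I(H)$ being a lattice isomorphism from saturated hereditary subsets onto basic graded ideals, with inverse $I \mapsto H(I)$), which gives $K = I(H(K))$. Then, since $K \subseteq J$, we have $H(K) = \{v : p_v \in K\} \subseteq \{v : p_v \in J\} = H(J)$, and monotonicity of $H \mapsto I(H)$ (part of the lattice isomorphism, or directly: more generators means a larger generated ideal) yields $K = I(H(K)) \subseteq I(H(J))$. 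Combining with the first paragraph, $I(H(J))$ is a basic graded ideal contained in $J$ that contains every basic graded ideal contained in $J$, hence is the largest such.

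The main obstacle — and the only place the argument is not purely formal — is the identification $K = I(H(K))$ for $K$ basic and graded, which is why the lattice isomorphism theorem must be invoked rather than re-proved; without it, one would need to rerun the normal-form analysis of \cite[Theorem 5.1]{Kumjian Pask Algebra} to see that a graded basic ideal is spanned by the monomials $s_\alpha s_\beta^*$ with $s(\alpha) = s(\beta) \in H(K)$. Since that theorem is already available in the excerpt, I would simply cite it. It is worth remarking that basicness of $J$ itself is used only to guarantee $H(J)$ has the right saturation property and that $I(H(J))$ is the relevant object; the proof is word-for-word the $C^*$-algebraic one of \cite[Theorem 5.6]{Schenkel} with "closed gauge-invariant" replaced by "basic graded" throughout, so I would present it at roughly this level of detail and defer to that reference.
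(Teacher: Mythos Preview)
Your proposal is correct and matches the paper's approach: the paper simply cites \cite[Theorem 3.8]{Schenkel} (the algebraic analogue of the \cite[Theorem 5.6]{Schenkel} you invoke), and your reconstruction of that argument via the lattice isomorphism $H \mapsto I(H)$ is exactly what that citation unpacks to. One small correction to a side remark: basicness of $J$ is not needed for $H(J)$ to be saturated and hereditary --- the earlier Lemma establishes this for \emph{any} ideal --- so in fact none of your displayed steps use the basicness hypothesis on $J$, though this does not affect the validity of the proof.
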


\begin{theorem}
 Let $\Lambda$ be a row-finite $\mathbb{N}$-graph with no sources, and $R$ a commutative ring with $1$. Let $J$ be a basic ideal in $KP_{R}(\Lambda)$, if $  \Lambda \setminus H(J) $ is aperiodic then $J$ is basic and graded.
\end{theorem}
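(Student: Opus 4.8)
The plan is to mirror the proof of \cite[Theorem 5.9]{Kumjian Pask Algebra} (or the corresponding $C^*$-algebra statement, our Lemma in Section~4), using the machinery already assembled in this section. So suppose $J$ is a basic ideal of $KP_R(\Lambda)$ with $\Lambda \setminus H(J)$ aperiodic. First I would pass to the quotient: by the lemma identifying $KP_R(\Lambda \setminus H(I))$ with $KP_R(\Lambda)/I(H(J))$ for the basic graded ideal $I(H(J))$, we have an isomorphism $\pi : KP_R(\Lambda \setminus H(J)) \to KP_R(\Lambda)/I(H(J))$ sending generators to generators. Since $I(H(J)) \subseteq J$ (because $I(H(J))$ is generated by $\{p_v : v \in H(J)\} \subseteq J$), the ideal $J/I(H(J))$ is a well-defined ideal of $KP_R(\Lambda)/I(H(J))$, and $H(J/I(H(J))) = \emptyset$ since $v \notin H(J)$ means $p_v \notin J$, i.e. $p_v + I(H(J)) \notin J/I(H(J))$ — here one uses basicness of $J$ together with the fact that $I(H(J))$ is basic to make sure no $rp_v$ slips into $J$ without $p_v$ doing so.

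Next I would use the Cuntz--Krieger uniqueness theorem for $KP_R$ (the final theorem of Section~7) applied to the aperiodic graph $\Lambda \setminus H(J)$: the quotient map $KP_R(\Lambda \setminus H(J)) \to KP_R(\Lambda \setminus H(J))/\pi^{-1}(J/I(H(J)))$ is a ring homomorphism, and by the previous paragraph it is nonzero on every $r q_v$ with $r \in R \setminus\{0\}$, $v \in (\Lambda\setminus H(J))^0$. (This last point is exactly where I would need to know that $q_v$ is not identified into the image of $J$ for $v \notin H(J)$, which is precisely the condition $H(J) = \{v : p_v \in J\}$ being sharp, combined with basicness.) Cuntz--Krieger uniqueness then forces $\pi^{-1}(J/I(H(J))) = 0$, hence $J = I(H(J))$.

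Finally, having shown $J = I(H(J))$, I invoke the lemma that says $I(H)$ is basic, graded (and idempotent) for any saturated hereditary $H$; taking $H = H(J)$ (which is saturated hereditary by the earlier lemma on $H(I)$) gives that $J$ is basic and graded, as claimed.

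The main obstacle I expect is the bookkeeping around basicness in the quotient: one must verify carefully that $H(J/I(H(J))) = \emptyset$, i.e. that passing to the quotient by the largest basic graded subideal $I(H(J))$ genuinely removes all vertex idempotents from $J$, and that the residual ideal $\pi^{-1}(J/I(H(J)))$ inherits the hypothesis of the Cuntz--Krieger uniqueness theorem (nonvanishing on all $r q_v$). This is where the hypothesis that $J$ is \emph{basic} — not merely an ideal — is essential, and it is the only genuinely non-formal step; the rest is assembling results already proved in Sections~6--8.
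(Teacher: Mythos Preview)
Your proposal is correct and follows essentially the same approach as the paper, which simply defers to \cite[Theorem 3.9]{Schenkel}: quotient by $I(H(J))$, identify with $KP_R(\Lambda\setminus H(J))$, use basicness of $J$ to see that no $rq_v$ lies in the image of $J$, and then apply the Cuntz--Krieger uniqueness theorem for the aperiodic graph $\Lambda\setminus H(J)$ to conclude $J=I(H(J))$. One small clarification: the step you flag as the ``main obstacle'' is in fact immediate --- for $v\notin H(J)$ and $r\neq 0$, basicness of $J$ alone gives $rp_v\notin J$ (you do not need basicness of $I(H(J))$ here), so the quotient map $KP_R(\Lambda\setminus H(J))\to KP_R(\Lambda)/J$ satisfies the hypothesis of the uniqueness theorem directly.
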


Proofs follow \cite[Theorems 3.8 and 3.9]{Schenkel} respectively.
\section{Regular ideals of $KP_R(\Lambda)$}
There are again many options for where to proceed. We opt here to examine the basic, graded regular ideals of our $\mathbb{N}$-graph algebras. We recall the definition of a regular ideal. 
\begin{definition}
An ideal $J$ in an algebra $A$ is called regular if $J^{\perp \perp} = J$ where $J^{\perp} = \{a \in A : ax = xa = 0$ $\forall$ $x \in J\}$.
\end{definition}

\begin{lemma}
Let $\Lambda$ be a row-finite $\mathbb{N}$-graph with no sources, and $R$ a commutative ring with $1$. If $J$ is a graded ideal of $KP_{R}(\Lambda)$ then $J^{\perp}$ is a graded ideal.
\end{lemma}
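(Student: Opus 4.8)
The plan is to exploit the $\mathbb{Z}^{\mathbb{N}}_0$-grading on $KP_R(\Lambda)$ directly, mirroring the corresponding statement for graph and $k$-graph algebras. Write $KP_R(\Lambda) = \bigoplus_{n \in \mathbb{Z}^{\mathbb{N}}_0} KP_R(\Lambda)_n$ for the grading from the preceding section, and for $a \in KP_R(\Lambda)$ write $a = \sum_n a_n$ for its homogeneous decomposition (a finite sum). Since $J$ is graded we have $J = \bigoplus_n (J \cap KP_R(\Lambda)_n)$, and the first thing to observe is that $J^{\perp}$ is automatically an ideal, so the only content is to show $J^{\perp}$ is graded, i.e. that $a \in J^{\perp}$ implies every homogeneous component $a_n$ lies in $J^{\perp}$.

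First I would fix $a \in J^{\perp}$ and an arbitrary homogeneous $x \in J_m := J \cap KP_R(\Lambda)_m$; it suffices to check $a_n x = x a_n = 0$ for each $n$, because a general $x \in J$ is a finite sum of such homogeneous pieces (using gradedness of $J$). Now $ax = 0$, and $ax = \sum_n a_n x$ with $a_n x \in KP_R(\Lambda)_{n+m}$. These live in distinct graded components as $n$ varies (the map $n \mapsto n+m$ is injective on $\mathbb{Z}^{\mathbb{N}}_0$), so the vanishing of the sum forces $a_n x = 0$ for every $n$. The same argument applied to $xa = 0$ gives $x a_n = 0$ for every $n$. Hence $a_n$ annihilates every homogeneous element of $J$, and therefore (by linearity) every element of $J$, so $a_n \in J^{\perp}$. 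This shows $J^{\perp} = \bigoplus_n (J^{\perp} \cap KP_R(\Lambda)_n)$, i.e. $J^{\perp}$ is graded.

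There is essentially no obstacle here: the argument is purely formal and uses only that the grading is by a group (so that translation by $m$ is a bijection) and that $J$ is a graded submodule. The one point worth stating carefully in the writeup is why it suffices to test against homogeneous elements of $J$ — this is exactly where gradedness of $J$ is used, since without it one could not reduce a general element of $J$ to homogeneous pieces still lying in $J$. If one wants to match the cited source \cite[analogue of the graph-algebra result]{Kumjian Pask Algebra} more closely, the proof can simply be pointed to there, but the self-contained version above is short enough to include in full.
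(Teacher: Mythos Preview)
Your argument is correct and is exactly the standard graded-ring computation that the cited reference \cite[Lemma 4.1]{Schenkel} carries out; the paper simply defers to that source rather than reproducing it. The only cosmetic difference is that you spell out the reduction to homogeneous test elements explicitly, which is a clarification rather than a change of method.
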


\begin{lemma}
Let $\Lambda$ be a row-finite $\mathbb{N}$-graph with no sources, and $R$ a commutative ring with $1$.
If $J$ is a basic graded ideal of $KP_{R}(\Lambda)$ then $J^{\perp}$ is a basic graded ideal.
\end{lemma}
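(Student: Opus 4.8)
The plan is to combine the previous lemma (which gives that $J^\perp$ is graded when $J$ is graded) with the fact that the basic property is preserved under the perp operation for graded ideals. So first I would observe that, since $J$ is basic and graded, $J = I(H(J))$ by the lattice isomorphism between saturated hereditary subsets of $\Lambda^0$ and basic graded ideals of $KP_R(\Lambda)$. Then, by the preceding lemma, $J^\perp$ is graded, so it remains only to show $J^\perp$ is basic. I would argue this using the vertex-set description: since $J^\perp$ is graded, it is basic if and only if it is of the form $I(K)$ for a saturated hereditary $K \subseteq \Lambda^0$, equivalently $J^\perp$ is generated by the idempotents $p_v$ it contains. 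Concretely, I would identify $K = H(J^\perp)$ and show $I(H(J^\perp)) = J^\perp$; this mirrors the $C^*$-algebra computation in Theorem 5.5 of the excerpt (part (i)), where $J^\perp = I(\Lambda^0 \setminus \bar H(J))$.

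The key steps, in order, would be: (1) Use $J = I(H(J))$ and write $H := H(J)$, so $J = \mathrm{span}_R\{s_\lambda s_\mu^* : s(\lambda) = s(\mu) \in H\}$ by the earlier lemma. (2) Compute $J^\perp$ directly: an element $a = \sum r_{\alpha,\beta} s_\alpha s_\beta^*$ kills $J$ on both sides precisely when, for every path $\lambda$ with $s(\lambda) \in H$, we have $a\, p_{s(\lambda)} = 0 = p_{s(\lambda)}\, a$; unwinding this via the normal-form machinery (the two normal-form lemmas quoted above) shows $a \in J^\perp$ iff $a$ is supported on paths $\alpha, \beta$ with $s(\alpha) = s(\beta) \notin \bar H(J)$, i.e.\ $J^\perp = I(\Lambda^0 \setminus \bar H(J))$. (3) Since $\Lambda^0 \setminus \bar H(J)$ is a saturated hereditary set (this is checked exactly as in the $C^*$-case, \cite[Theorem 6.3]{Schenkel}), the ideal $I(\Lambda^0 \setminus \bar H(J))$ is basic and graded by the lattice isomorphism theorem. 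Hence $J^\perp$ is basic and graded, as claimed. Alternatively, and more cheaply, once Step (2) identifies $J^\perp$ with $I(K)$ for the saturated hereditary set $K = \Lambda^0 \setminus \bar H(J)$, the conclusion is immediate from the lattice theorem without reproving that $I(K)$ is basic.

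I expect the main obstacle to be Step (2): pinning down $J^\perp$ on the nose. The subtlety is that $J^\perp$ is defined by a two-sided annihilation condition $ax = xa = 0$ for all $x \in J$, and translating this into a condition purely on the vertex set $H(J)$ requires care with the ghost paths and the fact that $KP_R(\Lambda)$ lacks a $C^*$-norm, so one cannot appeal to positivity or approximate identities. The fix is to work grade-by-grade using that $J^\perp$ is already known to be graded (previous lemma), reducing to the $0$-graded part where the normal-form lemmas let one extract a summand $r_{\delta,\gamma} p_w$; this is precisely the strategy of \cite[Theorem 6.3]{Schenkel} and its algebraic counterpart, so the argument there transfers with only notational changes to the $\mathbb{N}$-graph setting. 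Everything else — saturation and heredity of $\Lambda^0 \setminus \bar H(J)$, and the final appeal to the lattice isomorphism — is routine.
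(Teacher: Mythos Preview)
Your proposal is correct in outline, but it takes a substantially longer route than the paper. The paper simply defers to \cite[Lemma~4.2]{Schenkel}, and since that result is stated \emph{before} the explicit vertex-set computation of $J^\perp$ in \cite[Lemma~4.3, Theorem~4.4]{Schenkel} (which are the references for the \emph{subsequent} Lemma and Theorem here), the intended argument does not go through identifying $J^\perp$ with $I(\Lambda^0\setminus \bar H(J))$. Rather, the direct argument is: write $H=H(J)$ so that $J=I(H)=\mathrm{span}_R\{s_\alpha s_\beta^*:s(\alpha)=s(\beta)\in H\}$; if $rp_v\in J^\perp$ with $r\neq 0$, then for any $\alpha\in v\Lambda$ with $s(\alpha)\in H$ one would have $rp_v s_\alpha s_\alpha^*=rs_\alpha s_\alpha^*=0$, yet $s_\alpha^*(rs_\alpha s_\alpha^*)s_\alpha=rp_{s(\alpha)}\neq 0$, a contradiction, so $v\Lambda H=\emptyset$; and then $p_v s_\alpha s_\beta^*=0=s_\alpha s_\beta^* p_v$ for every spanning element of $I(H)$, giving $p_v\in J^\perp$. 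This uses nothing beyond $J=I(H)$ and the nonvanishing of $rp_w$.

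Your plan instead establishes the full identity $J^\perp=I(\Lambda^0\setminus\bar H(J))$ --- essentially the later vertex-set theorem --- and then reads off basicness from the lattice isomorphism. That is valid and has the by-product of giving the explicit form of $J^\perp$, but it inverts the paper's logical order and does strictly more work than the lemma asks. One small caution in your Step~(2): the equivalence ``$a$ kills $J$ on both sides precisely when $ap_w=0=p_wa$ for all $w\in H$'' is not correct as stated (annihilating the generating idempotents of an ideal on one side does not automatically annihilate the whole ideal on that side), so in practice you must still test $a$ against the spanning elements $s_\alpha s_\beta^*$ directly; the normal-form machinery you cite does handle this, but the sentence as written overstates what the idempotent condition alone gives you.
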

The proofs follow the same as in \cite[Lemma 4.1, 4.2]{Schenkel} respectively.
\begin{definition}
\begin{enumerate}
    \item[(i)] For $w \in \Lambda^{0}$, put $  T(w) = \{ s(\lambda) : \lambda \in \Lambda, r(\lambda) = w \} $ .
    \item[(ii)] If $ I \subseteq KP_{R}(\Lambda)$ an ideal, let  $\overline{H}(I) \subseteq \Lambda^{0}$ be the set \[ \overline{H}(I) = \{ r(\lambda) : \lambda \in \Lambda \text{ and } s(\lambda) \in H(I) \} .\]
\end{enumerate} 
\end{definition}
\begin{lemma}
Let $\Lambda$ be a row-finite $\mathbb{N}$-graph with no sources, and $R$ a commutative ring with $1$.
Let $H$ be a hereditary and saturated subset of $\Lambda^{0}$. Let $J$ be the ideal generated by $H$. Then $$ H(J^{\perp}) = \{v \in \Lambda^{0} : v \Lambda H = \emptyset\} = \Lambda^{0} \setminus \bar{H}(J).$$
\end{lemma}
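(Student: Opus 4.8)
The plan is to establish the two claimed set equalities separately, working from the definitions of $H(J^{\perp})$, $T(w)$, and $\overline{H}(J)$. For the second equality, $\{v \in \Lambda^{0} : v\Lambda H = \emptyset\} = \Lambda^{0}\setminus\overline{H}(J)$, the argument is purely combinatorial and essentially a tautology: a vertex $v$ satisfies $v\Lambda H = \emptyset$ precisely when there is no path $\lambda$ with $r(\lambda) = v$ and $s(\lambda) \in H = H(J)$, which is exactly the statement that $v \notin \overline{H}(J)$ by the definition of $\overline{H}$. (Note $H(J) = H$ by the lemma asserting $H(I(H)) = H$, since $J = I(H)$.) So that half requires only unwinding notation.

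The substantive half is $H(J^{\perp}) = \{v \in \Lambda^{0} : v\Lambda H = \emptyset\}$. First I would recall from the earlier lemma that $J = I(H) = \operatorname{span}_{R}\{s_{\lambda}s_{\mu}^{*} : s(\lambda) = s(\mu) \in H\}$, which gives a concrete description of the elements of $J$ against which orthogonality must be tested. For the inclusion $\supseteq$: suppose $v\Lambda H = \emptyset$; I want $p_{v} \in J^{\perp}$, i.e. $p_{v}x = xp_{v} = 0$ for every $x \in J$. It suffices to check this on spanning elements $s_{\lambda}s_{\mu}^{*}$ with $s(\lambda) = s(\mu) \in H$. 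Using (KP2), $p_{v}s_{\lambda}s_{\mu}^{*} = \delta_{v, r(\lambda)}\,s_{\lambda}s_{\mu}^{*}$, and $r(\lambda) = v$ would force $\lambda \in v\Lambda$ with $s(\lambda) \in H$, contradicting $v\Lambda H = \emptyset$; hence $p_{v}s_{\lambda}s_{\mu}^{*} = 0$, and symmetrically $s_{\lambda}s_{\mu}^{*}p_{v} = 0$ using $r(\mu) $ and $s(\mu) \in H$ together with heredity is not even needed — $r(\mu) = v$ would already put $\mu \in v\Lambda H$. So $p_{v} \in J^{\perp}$, giving $v \in H(J^{\perp})$.

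For the reverse inclusion $\subseteq$: suppose $v \in H(J^{\perp})$, so $p_{v} \in J^{\perp}$; I must show $v\Lambda H = \emptyset$. Suppose not, and pick $\lambda \in \Lambda$ with $r(\lambda) = v$ and $s(\lambda) \in H$. Then $s_{\lambda}s_{\lambda}^{*} \in J$, so $p_{v}(s_{\lambda}s_{\lambda}^{*}) = 0$. But $p_{v}s_{\lambda}s_{\lambda}^{*} = p_{r(\lambda)}s_{\lambda}s_{\lambda}^{*} = s_{\lambda}s_{\lambda}^{*}$ by (KP2), so $s_{\lambda}s_{\lambda}^{*} = 0$; multiplying by $s_{\lambda}^{*}$ on the left and $s_{\lambda}$ on the right and using (KP3) gives $p_{s(\lambda)} = 0$, contradicting the faithfulness clause $rp_{w} \neq 0$ of the universal property (taking $r = 1$). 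Hence $v\Lambda H = \emptyset$, as required. The main obstacle — really the only place care is needed — is making sure the orthogonality computations use (KP2)/(KP3) correctly on both sides and that the nonvanishing of vertex idempotents is invoked to derive the contradiction; everything else is bookkeeping, and indeed this mirrors \cite[Lemma 6.2 / Theorem 6.3]{Schenkel}, so I would remark that the proof is essentially the same.
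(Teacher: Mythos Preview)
Your proof is correct and matches the paper's approach: the paper simply cites \cite[Lemma 4.3]{Schenkel} (the Kumjian--Pask version; your reference to Lemma~6.2/Theorem~6.3 is the $C^*$-algebra section of the same paper), and the explicit argument you give---checking orthogonality on the spanning set $s_\lambda s_\mu^*$ with $s(\lambda)=s(\mu)\in H$ for one inclusion, and deriving $p_{s(\lambda)}=0$ from $p_v s_\lambda s_\lambda^*=0$ for the other---is exactly what that citation unpacks to.
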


\begin{theorem}
Let $\Lambda$ be a row-finite $\mathbb{N}$-graph with no sources, and $R$ a commutative ring with $1$.
Let $J \subseteq KP_{R}(\Lambda)$ be a basic graded ideal. Then: 
\begin{enumerate}
    \item[(i)] $J^{\perp} = I( {\Lambda^0 \setminus \bar{H} (J)});$
    \item[(ii)] $J^{\perp \perp} = I({ \{ w \in \Lambda ^{0} : T(w) \subseteq \bar{H}(J) \}  });$ and
    \item[(iii)] $J$ is regular if and only if $H(J) = \{ w \in \Lambda ^{0} : T(w) \subseteq \bar{H}(J) \}$
\end{enumerate}
\end{theorem}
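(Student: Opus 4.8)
The plan is to deduce all three parts from the bookkeeping facts already in place: the lattice isomorphism $H \mapsto I(H)$ between saturated hereditary subsets of $\Lambda^0$ and basic graded ideals of $KP_R(\Lambda)$ (so that, in particular, $H(I(H)) = H$ and every basic graded ideal $J$ equals $I(H(J))$); the fact that $J^\perp$ is again basic and graded whenever $J$ is; and the preceding lemma, which for $J$ basic and graded gives $H(J^\perp) = \Lambda^0 \setminus \overline{H}(J)$. Granting these, part (i) is immediate: since $J$ is basic and graded, $J^\perp$ is basic and graded, hence $J^\perp = I(H(J^\perp)) = I(\Lambda^0 \setminus \overline{H}(J))$.

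For (ii) I apply (i) with $J$ replaced by the basic graded ideal $J^\perp$, obtaining $J^{\perp\perp} = (J^\perp)^\perp = I(\Lambda^0 \setminus \overline{H}(J^\perp))$, so it remains to identify $\overline{H}(J^\perp)$. Unwinding the definition, $w \in \overline{H}(J^\perp)$ means there is $\lambda \in w\Lambda$ with $s(\lambda) \in H(J^\perp) = \Lambda^0 \setminus \overline{H}(J)$; negating, $w \notin \overline{H}(J^\perp)$ exactly when $s(\lambda) \in \overline{H}(J)$ for every $\lambda \in w\Lambda$, i.e. $T(w) \subseteq \overline{H}(J)$. Thus $\Lambda^0 \setminus \overline{H}(J^\perp) = \{w \in \Lambda^0 : T(w) \subseteq \overline{H}(J)\}$, which is exactly the claim in (ii).

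For (iii), $J$ is regular iff $J = J^{\perp\perp}$. If $H(J) = \{w : T(w) \subseteq \overline{H}(J)\}$, then applying $I(-)$ and using (ii) gives $J = I(H(J)) = J^{\perp\perp}$. Conversely, if $J = J^{\perp\perp}$ then $I(H(J)) = I(\{w : T(w) \subseteq \overline{H}(J)\})$; both $H(J)$ and $\{w : T(w) \subseteq \overline{H}(J)\}$ are saturated hereditary (the latter being $H(J^{\perp\perp})$, the vertex set of a basic graded ideal), so injectivity of the lattice map $H \mapsto I(H)$ forces $H(J) = \{w : T(w) \subseteq \overline{H}(J)\}$.

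The only step that is not pure formalism is confirming that $K := \{w \in \Lambda^0 : T(w) \subseteq \overline{H}(J)\}$ is saturated and hereditary, which is what licenses the appeal to injectivity of the lattice map in the converse of (iii). The clean route is to observe $K = H(J^{\perp\perp})$ and quote that $J^{\perp\perp}$ is basic and graded; if one prefers a direct verification, hereditariness is immediate from $T(s(\lambda)) \subseteq T(w)$ for $\lambda \in w\Lambda$, while saturation needs a short argument: given $s(v\Lambda^n) \subseteq K$ and $\lambda \in v\Lambda$ of degree $m$, extend $\lambda$ to a path of degree $m \vee n$ using the no-sources hypothesis, factor off the degree-$n$ initial segment to land in $v\Lambda^n$, and combine hereditariness of $K$ with the implication $s(\eta) \in \overline{H}(J) \Rightarrow r(\eta) \in \overline{H}(J)$ to conclude $s(\lambda) \in \overline{H}(J)$, hence $v \in K$. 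I expect this saturation check to be the main (though still routine) obstacle; everything else is the lattice dictionary applied twice.
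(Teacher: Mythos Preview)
Your proof is correct and follows precisely the approach that the paper intends: the paper itself does not supply an argument here but simply cites \cite[Theorem 4.4]{Schenkel}, and your derivation---using the lattice isomorphism $H\mapsto I(H)$, the closure of basic graded ideals under $(-)^\perp$, and the preceding lemma computing $H(J^\perp)$---is exactly the standard proof of that result transported to the $\mathbb{N}$-graph setting. Your verification that $K=\{w:T(w)\subseteq\overline{H}(J)\}$ is saturated and hereditary (either via $K=H(J^{\perp\perp})$ or by the direct argument you sketch) is also correct and fills in the only step that requires any work beyond formal bookkeeping.
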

The proofs follow as in \cite[Lemma 4.3, Theorem 4.4]{Schenkel} respectively.
\begin{theorem}
 Let $\Lambda$ be a row finite $\mathbb{N}$-graph with no sources which is aperiodic. Let $J$ be a basic, graded regular ideal of $KP_R(\Lambda)$ then $ \Lambda / J $ is aperiodic.
\end{theorem}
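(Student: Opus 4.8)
The plan is to follow the proof of Theorem~\ref{regquotient} essentially verbatim, replacing $C^{*}(\Lambda)$ by $KP_{R}(\Lambda)$ throughout: that argument is purely combinatorial in $\Lambda$, $H(J)$, $\overline{H}(J)$ and $H(J^{\perp})$, so nothing analytic intervenes and the translation is immediate. First I would set up the bookkeeping. Since $J$ is basic and graded we have $J=I(H(J))$ by the lattice isomorphism, and $\Lambda/J$ is the quotient $\mathbb{N}$-graph $\Lambda\setminus H(J)$, which is row-finite with no sources by the quotient lemma for basic graded ideals (its $\mathbb{N}$-graph part being proved as in Theorem~\ref{quotient}). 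Because $J$ is regular, the vertex-set description of regular ideals gives $H(J)=\{w\in\Lambda^{0}:T(w)\subseteq\overline{H}(J)\}$, while $H(J^{\perp})=\Lambda^{0}\setminus\overline{H}(J)$; note $H(J^{\perp})$ is saturated and hereditary, being $H$ of an ideal. Since $H(J)\subseteq\overline{H}(J)$ we get $H(J^{\perp})\cap H(J)=\emptyset$, hence $H(J^{\perp})\subseteq\Lambda^{0}\setminus H(J)$, and the vertices of $\Lambda\setminus H(J)$ split into those lying in $H(J^{\perp})$ and those lying in $\overline{H}(J)\setminus H(J)$, which I would treat separately.

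For a vertex $v\in H(J^{\perp})$ and a pair $m\neq n$ in $\mathbb{N}^{\mathbb{N}}_{0}$, aperiodicity of $\Lambda$ supplies $\lambda\in v\Lambda$ with $d(\lambda)\geq m\vee n$ and $\lambda(m,m+d(\lambda)-(m\vee n))\neq\lambda(n,n+d(\lambda)-(m\vee n))$. As $H(J^{\perp})$ is hereditary and $r(\lambda)=v\in H(J^{\perp})$, every vertex of $\lambda$ lies in $H(J^{\perp})$, hence outside $H(J)$, so $\lambda$ is a morphism of $\Lambda\setminus H(J)$ and witnesses the aperiodicity condition there at $v$. This settles every vertex of $H(J^{\perp})$.

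For a vertex $w\in\overline{H}(J)\setminus H(J)$: since $w\notin H(J)$ and $J$ is regular, $T(w)\not\subseteq\overline{H}(J)$, so there is $\gamma\in\Lambda$ with $r(\gamma)=w$ and $u:=s(\gamma)\in\Lambda^{0}\setminus\overline{H}(J)=H(J^{\perp})$. Since $H(J)$ is hereditary and $u\notin H(J)$, no vertex of $\gamma$ lies in $H(J)$, so $\gamma$ is a morphism of $\Lambda\setminus H(J)$; after shortening $\gamma$ I may assume $\gamma(i,i)\neq\gamma(j,j)$ whenever $i\neq j$. Given $m\neq n$, I would then pick a path $\delta\in u\Lambda$ furnished by aperiodicity of $\Lambda$ at $u$; all vertices of $\delta$ lie in the hereditary set $H(J^{\perp})$, so $\gamma\delta$ is a morphism of $\Lambda\setminus H(J)$, and with $\delta$ taken long enough that $d(\gamma\delta)\geq m\vee n$ the path $\gamma\delta$ separates the windows at positions $m$ and $n$: for the part of the comparison lying inside $\gamma$ one uses the distinctness of the vertices of $\gamma$, and for the part reaching into $\delta$ one uses the defining inequality of the aperiodicity witness. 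Hence $\gamma\delta$ witnesses the aperiodicity condition at $w$, and together with the previous case this shows $\Lambda/J$ is aperiodic.

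The step I expect to be the main obstacle is making the concatenation in the last case precise for \emph{every} pair $m\neq n$: unlike for ordinary directed graphs ($1$-graphs), a coordinate position $p\leq d(\gamma\delta)$ may straddle the junction vertex $u$ and so sit inside neither $\gamma$ nor $\delta$. One handles this exactly as in the proof of Theorem~\ref{regquotient} and in \cite{Schenkel}, splitting into the case $m\vee n\leq d(\gamma)$, where $\gamma$ (padded by any extension) already works because its vertices are distinct, and the case where a window reaches past $u$, where $\delta$ is chosen using that the restriction of $\Lambda$ to the saturated hereditary set $H(J^{\perp})$ is again an aperiodic row-finite $\mathbb{N}$-graph with no sources. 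The remaining verifications are the routine ones transported from \cite{Schenkel}.
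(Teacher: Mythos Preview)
Your proposal is correct and follows exactly the approach the paper takes: the paper's proof reads in its entirety ``Proof is the same as $C^{*}$-algebra case, Theorem~\ref{regquotient},'' and your argument is a faithful (indeed, more carefully spelled-out) rendition of that proof with $C^{*}(\Lambda)$ replaced by $KP_{R}(\Lambda)$.
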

Proof is the same as $C^*$-algebra case, Theorem \ref{regquotient}.

The last three proofs follow the same as \cite[Lemma 4.8, Prop. 4.9, Cor. 4.10]{Schenkel} respectively, so we omit them here.
\begin{lemma}
Let $\Lambda$ be a row-finite $\mathbb{N}$-graph with no sources. Let $J$ be a basic, regular ideal of $KP_R(\Lambda)$. Then $I(H(J)) \subseteq J$ is a basic, graded regular ideal.
\end{lemma}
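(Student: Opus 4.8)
The plan is to mimic the $C^*$-algebraic argument of \cite[Lemma 6.6]{Schenkel}, transported to the Kumjian-Pask setting. Let $J$ be a basic regular ideal of $KP_R(\Lambda)$. The first step is to observe that $H(J)$ is saturated and hereditary (this is the Kumjian-Pask analogue of \cite[Lemma 5.2]{Kumjian Pask Algebra}, already recorded in the excerpt), so $I(H(J))$ makes sense and, by the lattice theorem for basic graded ideals, it is a basic graded ideal with $H(I(H(J))) = H(J)$. It is also immediate that $I(H(J)) \subseteq J$, since $I(H(J))$ is generated by $\{p_v : v \in H(J)\} \subseteq J$. Thus the content of the lemma is the single word \emph{regular}: I must show $I(H(J))^{\perp\perp} = I(H(J))$.

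For this I would invoke the vertex-set description of $J^{\perp}$ and $J^{\perp\perp}$ for basic graded ideals, i.e.\ the theorem in the excerpt stating $J^{\perp} = I(\Lambda^0 \setminus \bar H(J))$, $J^{\perp\perp} = I(\{w \in \Lambda^0 : T(w) \subseteq \bar H(J)\})$, and $J$ regular iff $H(J) = \{w : T(w) \subseteq \bar H(J)\}$. Applying the regularity criterion to $I(H(J))$ in place of $J$, it suffices to prove $H(I(H(J))) = \{w \in \Lambda^0 : T(w) \subseteq \bar H(I(H(J)))\}$. Since $H(I(H(J))) = H(J)$ and hence $\bar H(I(H(J))) = \bar H(J)$, this reduces to the set equality
\[
H(J) = \{w \in \Lambda^0 : T(w) \subseteq \bar H(J)\}.
\]
The inclusion $\subseteq$ is easy: if $w \in H(J)$, then for any $\lambda$ with $r(\lambda) = w$ heredity of $H(J)$ gives $s(\lambda) \in H(J) \subseteq \bar H(J)$, so $T(w) \subseteq \bar H(J)$. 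The reverse inclusion is where the hypothesis that $J$ itself is regular enters: one shows $\{w : T(w) \subseteq \bar H(J)\} \subseteq H(J^{\perp\perp}) = H(J)$, the last equality being exactly the regularity of $J$ fed through part (ii)/(iii) of the vertex-set theorem (note $w \in T(w)$ always, taking $\lambda = w$, so $T(w) \subseteq \bar H(J)$ forces $w \in \bar H(J)$, and then one argues $p_w \in J^{\perp\perp}$).

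The main obstacle I anticipate is bookkeeping rather than conceptual: making sure that $\bar H(J)$, $\bar H(I(H(J)))$, and the ``$T(w)$'' sets all genuinely depend only on $H(J)$ and not on finer data of $J$, and that the regularity criterion from the earlier theorem is being applied to the correct ideal. Once the set equality $H(J) = \{w : T(w) \subseteq \bar H(J)\}$ is in hand, regularity of $I(H(J))$ follows formally, and the containment $I(H(J)) \subseteq J$ together with $I(H(J))$ being basic and graded (from the lattice theorem) finishes the proof. I expect this to run essentially verbatim parallel to \cite[Lemma 4.8]{Schenkel}, with the Kumjian-Pask lattice and vertex-set results substituted for their $C^*$-algebraic counterparts, which is why the paper defers it.
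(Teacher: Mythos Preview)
Your proposal is correct and matches the paper's approach exactly: the paper simply defers to \cite[Lemma 4.8]{Schenkel}, and your outline is precisely that argument transported to the $\mathbb{N}$-graph Kumjian--Pask setting. The only point requiring care is the reverse inclusion, where you must not invoke the vertex-set theorem on $J$ itself (it is not assumed graded) but instead argue, as your parenthetical indicates, that $T(w)\subseteq \bar H(J)$ gives $p_w\in I(H(J))^{\perp\perp}\subseteq J^{\perp\perp}=J$ directly.
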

\begin{proposition}
If $\Lambda$ is an aperiodic, row-finite $\mathbb{N}$-graph with no sources, and $J$ is a basic, regular ideal in $KP_R(\Lambda)$, then $J$ is graded.
\end{proposition}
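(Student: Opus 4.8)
The plan is to pass to the quotient by the largest basic, graded ideal contained in $J$ and then invoke the aperiodicity-based graded uniqueness result already established, mirroring \cite[Prop. 4.9]{Schenkel}. Write $H := H(J)$ and $I := I(H)$. By the preceding lemma, $I$ is a basic, graded, regular ideal with $I \subseteq J$, and by the lemma describing $I(H)$ we have $H(I) = H$. Since $\Lambda$ is aperiodic and $I$ is a basic, graded, regular ideal, the algebraic version of Theorem~\ref{regquotient} gives that $\Lambda \setminus H(I) = \Lambda \setminus H$ is aperiodic. Moreover, the lemma on quotients by basic graded ideals supplies an isomorphism $\pi : KP_R(\Lambda \setminus H) \to KP_R(\Lambda)/I$ with $\pi(q_v) = p_v + I$ on vertices, where $(q,t)$ denotes the universal Kumjian–Pask family of $KP_R(\Lambda \setminus H)$, and $\Lambda \setminus H$ is itself a row-finite $\mathbb{N}$-graph with no sources.

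Next I would transport $J$ across $\pi$. Let $\bar J := \pi^{-1}(J/I)$, an ideal of $KP_R(\Lambda \setminus H)$. It is basic: if $r q_v \in \bar J$ for some $r \in R \setminus \{0\}$, then $r p_v \in J$, and since $J$ is basic $p_v \in J$, so $q_v \in \bar J$. Its vertex set is empty: if $v \in \Lambda^0 \setminus H$ and $q_v \in \bar J$, then $p_v \in J$, i.e. $v \in H(J) = H$, a contradiction; hence $H(\bar J) = \emptyset$. Consequently $(\Lambda \setminus H) \setminus H(\bar J) = \Lambda \setminus H$, which we have just seen is aperiodic.

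Now I would apply the theorem that a basic ideal whose associated quotient graph is aperiodic is graded: applied to $\bar J$ in $KP_R(\Lambda \setminus H)$, it shows $\bar J$ is graded, and hence $J/I$ is graded in $KP_R(\Lambda)/I$. Finally, $I$ is itself graded and the quotient map $KP_R(\Lambda) \to KP_R(\Lambda)/I$ is $\mathbb{Z}^{\mathbb{N}}_0$-graded, so the preimage of the graded ideal $J/I$ — which is precisely $J$ — is graded. Together with the hypothesis that $J$ is basic, this gives that $J$ is a basic graded ideal.

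The only slightly delicate points are the bookkeeping showing $H(\bar J) = \emptyset$, so that the aperiodicity hypothesis of the graded theorem is genuinely met for the quotient graph, and the observation that gradedness of an ideal is inherited from the quotient once one knows the kernel is graded; everything else is a direct appeal to the results established above.
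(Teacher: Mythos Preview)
Your argument is correct and follows the same skeleton as the paper's proof (which simply cites \cite[Prop.~4.9]{Schenkel}): use the preceding lemma to get that $I(H(J))$ is a basic, graded, regular ideal, invoke the aperiodicity-preservation result to see that $\Lambda\setminus H(J)$ is aperiodic, and then conclude gradedness. The one redundancy is your detour through the quotient $KP_R(\Lambda)/I \cong KP_R(\Lambda\setminus H)$: once you have established that $\Lambda\setminus H(J)$ is aperiodic, the theorem ``if $J$ is basic and $\Lambda\setminus H(J)$ is aperiodic then $J$ is graded'' applies directly to $J$ in $KP_R(\Lambda)$, so there is no need to transport to $\bar J$, check $H(\bar J)=\emptyset$, and pull gradedness back along the graded quotient map.
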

\begin{corollary}
 Let $\Lambda$ be an aperiodic, row-finite $\mathbb{N}$-graph with no sources. Let $J$ be a basic, regular ideal in $KP_R(\Lambda).$ Then $\Lambda \setminus J$ is aperiodic and $KP_R(\Lambda) / J \cong C^{*}(\Lambda \setminus H(J)).$
\end{corollary}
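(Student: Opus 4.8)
The plan is to deduce the statement by chaining the three results that immediately precede it in this section together with the basic, graded ideal structure established earlier in the same section. The key observation is that a basic regular ideal is automatically graded: this is exactly the Proposition above, \emph{if $\Lambda$ is aperiodic, row-finite with no sources and $J$ is basic and regular, then $J$ is graded}. So the first step is to upgrade the hypothesis on $J$ from ``basic, regular'' to ``basic, graded and regular''.

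Once $J$ is known to be basic and graded, I would invoke the lattice isomorphism $H \mapsto I(H)$ between saturated hereditary subsets of $\Lambda^0$ and basic graded ideals of $KP_R(\Lambda)$ to write $J = I(H(J))$ with $H := H(J)$ saturated and hereditary. Then the Lemma that for saturated hereditary $H$ the category $\Lambda \setminus H$ is again a row-finite $\mathbb{N}$-graph with no sources, together with the Lemma giving $KP_R(\Lambda \setminus H(I)) \cong KP_R(\Lambda)/I$ for basic graded $I$, produces the canonical isomorphism $KP_R(\Lambda)/J = KP_R(\Lambda)/I(H(J)) \cong KP_R(\Lambda \setminus H(J))$; I would also flag that the ``$C^{*}$'' in the statement should read ``$KP_R$''. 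For the aperiodicity of $\Lambda \setminus H(J)$, the ideal $J$ is now basic, graded and regular, so the preceding Theorem (``$\Lambda$ aperiodic and $J$ basic, graded, regular $\Rightarrow$ $\Lambda \setminus H(J)$ aperiodic'') applies directly; that theorem in turn runs the same argument as Theorem \ref{regquotient}, using the regularity condition $H(J) = \{\, w \in \Lambda^0 : T(w) \subseteq \overline{H}(J)\,\}$ to transport an aperiodicity witness at each vertex of $\Lambda \setminus H(J)$ down from $\Lambda$.

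I do not anticipate a genuine obstacle here: every piece of content has already been isolated in a lemma, proposition or theorem, so the work is one of assembly and checking hypotheses. The only point demanding a little care is the order of the implications — one must first apply the Proposition to learn that $J$ is graded before the quotient description and the aperiodicity theorem become available — together with the harmless correction of the typographical ``$C^{*}(\Lambda \setminus H(J))$'' to ``$KP_R(\Lambda \setminus H(J))$'' in the conclusion.
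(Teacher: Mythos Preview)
Your proposal is correct and is exactly the assembly the paper has in mind: the paper omits the proof entirely, pointing to \cite[Cor.~4.10]{Schenkel}, which is precisely this chain --- first use the Proposition to get that $J$ is graded, then invoke the lattice isomorphism and the quotient description, and finally apply the aperiodicity-preservation theorem. Your observation that ``$C^{*}(\Lambda\setminus H(J))$'' should read ``$KP_R(\Lambda\setminus H(J))$'' is also right; this is a typo in the statement.
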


\end{document}